\theoremstyle{plain}
\newtheorem{theorem}{Theorem}[section] 
\newtheorem*{theorem*}{Theorem}
\newtheorem{proposition}[theorem]{Proposition}
\newtheorem*{proposition*}{Proposition}
\newtheorem{corollary}[theorem]{Corollary}
\newtheorem*{corollary*}{Corollary}
\newtheorem{assumption}[theorem]{Assumption}
\newtheorem{lemma}[theorem]{Lemma}
\newtheorem*{lemma*}{Lemma}
\theoremstyle{definition}
\newtheorem{remark}[theorem]{Remark}
\newtheorem{definition}[theorem]{Definition}
\newcommand{\R}{\ensuremath{\mathbb R}} 
\newcommand{\dd}{\ensuremath{{\hspace{0.01em}\mathrm d}}}
\newcommand{\V}{\ensuremath{\mathcal V}}
\newcommand{\bbGamma}{{\mathpalette\makebbGamma\relax}}
\newcommand{\makebbGamma}[2]{%
	\raisebox{\depth}{\scalebox{1}[-1]{$\mathsurround=0pt#1\mathbb{L}$}}%
}
\definecolor{cadmiumgreen}{rgb}{0.0, 0.42, 0.24}
\definecolor{tangelo}{rgb}{0.98, 0.3, 0.0}
\definecolor{shamrockgreen}{rgb}{0.0, 0.62, 0.38}
\definecolor{darkolivegreen}{rgb}{0.33, 0.42, 0.18}
\definecolor{deepmagenta}{rgb}{0.8, 0.0, 0.8}
\definecolor{burgundy}{rgb}{0.5, 0.0, 0.13}
\definecolor{darkbyzantium}{rgb}{0.36, 0.22, 0.33}
\newcommand\blfootnote[1]{%
	\begingroup
	\renewcommand\thefootnote{}\footnote{#1}%
	\addtocounter{footnote}{-1}%
	\endgroup
}
\begin{document}

		\title{On ergodic invariant measures for the stochastic Landau-Lifschitz-Gilbert equation in 1D} 
		 \author{Emanuela Gussetti}
		 \affil{\small Bielefeld University, Germany \vspace{-1em} }
	\maketitle
	
\blfootnote{\textit{Mathematics Subject Classification (2022) ---} 
		60G10, 
		60H15, 
		60L50, 
		60L90.\\ 
	\textit{Keywords and phrases ---} Landau-Lifschitz-Gilbert equation, harmonic map heat flow, invariant measures, ergodic measures, spherical Brownian motion, stationary solutions, Gibbs measure, rough PDEs, long time behaviour.}
	\begin{abstract}
		We establish existence of an ergodic invariant measure on $H^1(D,\mathbb{R}^3)\cap L^2(D,\mathbb{S}^2)$ for the stochastic Landau-Lifschitz-Gilbert equation on a bounded one dimensional interval $D$. The conclusion is achieved by employing the classical Krylov-Bogoliubov theorem. In contrast to other equations, verifying the hypothesis of the Krylov-Bogoliubov theorem is not a standard procedure. We employ rough paths theory to show that the semigroup associated to the equation has the Feller property in $H^1(D,\mathbb{R}^3)\cap L^2(D,\mathbb{S}^2)$. It does not seem possible to achieve the same conclusion by the classical Stratonovich calculus. On the other hand, we employ the classical Stratonovich calculus to prove the tightness hypothesis. The Krein-Milman theorem implies existence of an ergodic invariant measure.
		In case of spatially constant noise, we show that there exists a unique Gibbs invariant measure and we establish the qualitative behaviour of the unique stationary solution. In absence of the anisotropic energy and for a spatially constant noise, we are able to provide a pathwise long time behaviour result: in particular, every solution synchronises with a spherical Brownian motion and it is recurrent for large times.
	\end{abstract}

    \section{Introduction }
    The Landau-Lifschitz-Gilbert (LLG) equation is a phenomenological model that describes the time evolution of the magnetization of a ferromagnetic material for small temperatures below the Curie temperature, namely the temperature above which the material loses its permanent magnetic properties. The LLG equation is given by
    \begin{equation}\label{LLG_intro}
    \dd u =
    \lambda_1  u\times \mathcal{H} (u)
    -\lambda_2 u\times (u\times \mathcal{H} (u))\, ,
    \end{equation}
    where $\lambda_1\in\mathbb{R}$ and $\lambda_2>0$. Before introducing $\mathcal{H}(\phi)$, we need to define the energy functional. Given a bounded interval $D\subset \mathbb{R}$, we define the energy functional for all $\phi\in L^2(D;\mathbb{S}^2)\cap H^1(D;\mathbb{R}^3)$  and by
    \begin{align}\label{eq:energy_intro}
    	\mathcal{E}(\phi):=\frac{1}{2}\int_{D}\left[ |\partial_x \phi(x)|^2+g(\phi(x))\cdot \phi(x)\right] \dd x\, ,
    \end{align}
    where $g:\mathbb{R}^3\rightarrow \mathbb{R}^3$ is a linear function. The gradient part in \eqref{eq:energy_intro} represents the exchange energy, which is a quantum mechanical effect responsible of the ferromagnetic phenomenon. The bilinear term in \eqref{eq:energy_intro} takes into account the anisotropic energy, which describes the fact that some properties of the material occur differently in different directions. An example of the anisotropic energy for the current system is for $\beta_2,\beta_3\in \mathbb{R}$
    \begin{align*}
    	g(\phi)\equiv g((\phi^1,\phi^2,\phi^3))=(0,\beta_2 \phi^2,\beta_3 \phi^3)\, ,
    \end{align*}
    where we could read that two directions are getting a push in comparison to the first one, where no linear push is applied (of course other choices of the \enquote{privileged} axis are possible). The operator $\mathcal{H}(\phi)$ is minus the $L^2$ gradient of the energy functional, namely
    \begin{align*}
    	\mathcal{H} (\phi)=-\nabla\mathcal{E}(\phi)=\partial_x^2\phi- g'(\phi)\, ,
    \end{align*}
      for all $\phi\in L^2(D;\mathbb{S}^2)\cap H^1(D;\mathbb{R}^3)$.
     Landau and Lifschitz \cite{landau1935theory} provided for the first time a phenomenological derivation of the equation in 1935 and later, in 1954, Gilbert \cite{gilbert1954} considered and other damping parameter. The two equations proposed by Landau and Lifschitz and Gilbert have been proved to be mathematically equivalent by Brown \cite{brown1978}. See \cite{lakshmanan2011} for an overview on the physics. 
     
     Mathematically, the deterministic equation \eqref{LLG_intro} has been studied extensively. The existence and non uniqueness of weak solution on a two and three dimensional domain has been established by Alouges and Soyeur \cite{alouges1992global}. For the existence and uniqueness of strong solutions on a one dimensional domain, we refer to Y.~Zhu, B.~Guo, S.~Tan \cite{Zhu_Guo_Tan}. See also De Laire \cite{de_Laire} for a review on recent developments in the deterministic framework as well as  for the connection with other equations. In the deterministic setting, we mention existing results on steady states (i.e. stationary in time solutions): Carbou, Labbe \cite{Carbou_Labbe} for the one dimensional LLG and Van den Berg, J. Williams \cite{Carbou,van_den_Berg_Williams} for the two and three dimensional case.
     We stress here that, if $\lambda_1=0$, $\lambda_2=1$ and we do not consider the anisotropic energy,  \eqref{LLG_intro} coincides with the harmonic map flow on the sphere $\mathbb{S}^2\subset\mathbb{R}^3$ given by
     \begin{align*}
     	\partial_t u=\partial_x^2 u+u|\partial_x u|^2\, ,
     \end{align*}
     via the equality $-u_t\times (u_t\times \partial_x^2u_t)=\partial_x^2 u_t+u_t|\partial_x u_t|^2$ in $H^{-1}$ for $u_t(x)\in \mathbb{S}^2$ (see \cite[Lemma 2.4]{brzezniak_LDP}).
     Thus, the results of this paper apply also to the harmonic map flow.
     
     There are also several studies on the stochastic version of \eqref{LLG_intro}. Indeed the introduction of a noise in \eqref{LLG_intro}  models the fact that ferromagnetic materials are susceptible to temperature changes. As observed by Brown \cite{brown1963thermal, brown1978}, a way to do so is to include a Brownian fluctuation $W$ into the gradient of the energy: in this way the random changes of temperature are taken into account. Introducing a thermal fluctuation leads to a (not anymore phenomenological) description of the magnetization as a solution on $[0,T]\times D$ to the equation 
     \begin{equation}\label{LLG}
     \begin{aligned}
     \dd u =
     [\lambda_1  u\times \partial^2_x u
     -\lambda_2 u\times (u\times \partial_x^2 u)+\lambda_1 u\times g'(u)-\lambda_2u\times (u\times g'(u))]\dd t+u\times \circ dW\, ,
     \end{aligned}
     \end{equation}
     for an initial condition $u_0=u^0$, with null Neumann boundary conditions and a saturation condition on the material (which gives the mathematical spherical constraint)
     \begin{equation*}
     \partial_x u_t (x)=0\;\forall (t,x)\in [0,T]\times \partial D\, ,\quad |u_t(x)|=1\quad \forall (t,x)\in [0,T]\times D \, .
     \end{equation*}
    In this work the noise $W$ is chosen to be white in time and more regular in space and the integral is understood in the sense of Stratonovich. More precisely, throughout this introduction, we assume the following conditions on the noise.
    \begin{assumption}\label{assumption:noise_intro}
    	We consider noises either of the form $W_t(x):=h_1(x)B_t$, where $h_1\in H^1(D;\mathbb{R}^3)$, $B$ is a real valued Brownian motion, or of the form $W_t(x)=h_2(x)\bar{B}_t$, where $h_2\in H^1(D;\mathbb{R})$ and $\bar{B}$ is a $\mathbb{R}^3$-valued Brownian motion.
    \end{assumption}
     It is possible to consider linear combinations of the two noises in what follows, but we consider only one noise type at the time for clarity of exposition. The bounds in the theorems would change accordingly.
    
	The addition of the anisotropic energy as a linear operator into the equation does not constitute a mathematical difficulty when studying existence and uniqueness of the equation via our method (also in the stochastic case). On the other hand, as proved also in Brze\'zniak, Goldys and Jegaraj \cite{brzezniak_LDP}, the anisotropic energy is determinant to capture some physical behaviours of the magnetization, like the magnetization reversal phenomenon. The anisotropic energy gives an accurate description of the stray energy for one dimensional magnets. We include the anisotropic energy in the study. Throughout this introduction, we assume the following condition on the anisotropic energy.
	\begin{assumption}\label{assumption:anisotropic_intro}
		Assume that the anisotropy is of the form $g'(x):=Ax+b$, where $A$ is a real valued $3\times 3$ matrix and $b\in \mathbb{R}^3$.  
	\end{assumption}

	Concerning existence of weak solutions to the stochastic LLG in more dimensions, we mention the works \cite{brzezniak2013weak, hocquet2015landau, goldys2016finite,goldys2020weak} (but the list is not exhaustive).  
	Existence of a martingale solution $u$ to \eqref{LLG} on a one dimensional domain is established by  Brze\'zniak, Goldys, Jegaraj \cite{brzezniak_LDP} and Brze\'zniak, Manna, Mukherjee \cite{brzezniak2019wong}. The authors also prove pathwise uniqueness. Gussetti, Hocquet \cite{LLG1D} prove existence and uniqueness of a pathwise energy solution $u$ in $L^\infty(H^1)\cap L^2(H^2)\cap C([0,T],L^2)$ by means of rough paths theory.

	 The existence of a unique solution to \eqref{LLG} implies the existence of a Markov semigroup associated to $u$, that we denote by $(P_t)_t$. From a physical perspective, studying the semigroup $(P_t)_t$ corresponds to looking at the average behaviour of the trajectories at every time $t>0$. In other words, we observe where the trajectories are concentrated on the sphere at time $t$ or at the average configuration.
	
	 A typical question in this framework concerns the existence of a configuration on the sphere which does not vary in time, that we can refer to as stationary configuration. This translates into searching for the so-called invariant measures, which correspond to equilibrium states of the system (see e.g. \cite{corso_gibbs}). 
	The existence of  an invariant measure $\mu$ associated to $(P_t)_t$ on $H^1(\mathbb{S}^2):=L^2(D;\mathbb{S}^2)\cap H^1(D;\mathbb{R}^3)$ and the rigorous identification with the Gibbs measure
	\begin{align}\label{eq:gibbs_intro}
		\frac{\exp(-\mathcal{E}(y))\dd y}{\int_{H^1(\mathbb{S}^2)}\exp(-\mathcal{E}(v))\dd v}
	\end{align}
	for some of those invariant measures, are open problems. Note that the formulation in \eqref{eq:gibbs_intro} is only formal. Indeed the only existing translation invariant Borel measures on an infinite dimensional separable Banach space are either the null measure or the measure that assigns to every open set $+\infty$. As a consequence, there is no natural extension (in this sense) of a \enquote{Lebesgue}-like measure to an infinite dimensional separable Banach space. There are nevertheless different ways of constructing measures on an infinite dimensional space. A formal derivation of the fact that the invariant measure should be a Gibbs measure of the form \eqref{eq:gibbs_intro} is established by García-Palacios, Lázaro \cite{garcia_lazaro_1998}. 
	
	  In the framework of ferromagnetism, it is worth mentioning the Landau-Lifschitz-Bloch (LLB) equation, which is an alternative model to the LLG equation that holds below, close and above the Curie temperature. The LLB equation interpolates between the LLG at low temperatures and the Bloch equation, which holds for very high temperatures. In the LLB model, the magnetization is no more constrained to the sphere. The LLB equation has been introduced by Garanin \cite{garanin_1,garanin} and its stochastic version has been studied by Jiang, Ju, Wang \cite{Jang_Ju_Wang_LLB} and Brze\'zniak, Goldys and K.-N.~Le \cite{LLB_Brezniak}. The first named authors establish global existence of martingale weak solutions to the LLB equations. The second named authors establish existence and uniqueness of probabilistically strong martingale solutions to the equation and the existence of an invariant measure in one and two dimensions. 
	
	Despite the numerous studies on the stochastic LLG equation and related equations in ferromagnetism, the existence of invariant measures for the LLG has not been addressed for the full equation \eqref{LLG}.
	The only result approaching an approximated version of the problem is from Neklyudov, Prohl \cite{Neklyudov_Prohl}. The authors show rigorously the existence of a Gibbs invariant measure of a finite system of spins behaving like a simplified LLG equation: the construction occurs by means of a Lyapunov function. The authors also make crucial considerations on the role of the noise and its implications on invariant measures. This fact is explored in what follows. We refer to the monograph \cite{Banas_book} and the references therein for further details on the numerical approaches and the open problems concerning the LLG equation. 
	We mention the study

\paragraph{The main results}
	The aim of this paper is to prove existence of an ergodic invariant measures for the stochastic LLG equation on a one dimensional domain. We prove existence of an ergodic invariant measure in presence of both exchange energy and anisotropic energy (in the form of a bilinear operator). In some special cases, we can identify the invariant measure with the Gibbs distribution. We state the main results and the procedures.
	
	\paragraph{Existence of an ergodic invariant measure and stationary solutions.}
	\begin{theorem*}
	The semigroup $(P_t)_t$ on $H^1(\mathbb{S}^2)$ associated to \eqref{LLG} admits an invariant measure.
	\end{theorem*}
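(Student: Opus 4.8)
The plan is to apply the Krylov--Bogoliubov theorem. Fix an initial datum $u^0\in H^1(\mathbb{S}^2)$ and consider the time-averaged measures
\begin{align*}
\mu_T:=\frac{1}{T}\int_0^T P_t^*\delta_{u^0}\,\dd t\,,\qquad T>0\,,
\end{align*}
on the Polish space $H^1(\mathbb{S}^2)$. If the family $(\mu_T)_{T>0}$ is tight and the semigroup $(P_t)_t$ is Feller, then every weak limit point of $(\mu_T)_T$ along a sequence $T_n\to\infty$ is an invariant measure for $(P_t)_t$. The whole argument therefore reduces to establishing these two properties, and the delicate one is the Feller property.

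For tightness I would exploit the energy $\mathcal{E}$ as a Lyapunov functional. Applying the Itô--Stratonovich formula to $t\mapsto\mathcal{E}(u_t)$ along the solution of \eqref{LLG}, the dissipative term $-\lambda_2\,u\times(u\times\mathcal{H}(u))$ produces a negative contribution proportional to $\|\mathcal{H}(u_t)\|_{L^2}^2$, while the noise $u\times\circ\dd W$ and the anisotropy $g'$ contribute lower-order terms that, owing to the saturation constraint $|u_t|=1$, remain controlled by $\mathcal{E}(u_t)$ itself. Taking expectations and integrating in time, I expect a bound of the form
\begin{align*}
\frac{1}{T}\int_0^T\mathbb{E}\,\|u_t\|_{H^2}^2\,\dd t\le C\bigl(1+\mathcal{E}(u^0)\bigr)\,,
\end{align*}
uniform in $T$. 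Since $D$ is a bounded interval, the embedding $H^2(D;\mathbb{R}^3)\hookrightarrow H^1(D;\mathbb{R}^3)$ is compact, and the spherical constraint is preserved by the flow; hence the sublevel sets of $\|\cdot\|_{H^2}$ intersected with $L^2(D;\mathbb{S}^2)$ are compact in $H^1(\mathbb{S}^2)$. A Chebyshev argument applied to the displayed bound then yields tightness of $(\mu_T)_T$.

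The main obstacle is the Feller property, namely continuity of $u^0\mapsto P_t\varphi(u^0)=\mathbb{E}\,\varphi(u_t(u^0))$ for every bounded continuous $\varphi$ on $H^1(\mathbb{S}^2)$. The natural route is continuous dependence of the solution at time $t$ on the initial datum in the $H^1$ topology. Here the Stratonovich formulation is problematic: when estimating the difference of two solutions in $H^1$, the correction terms arising from $u\times\circ\dd W$ cannot be closed without losing a derivative, so the classical stochastic calculus does not deliver $H^1$-continuity. Instead I would reformulate \eqref{LLG} as a rough differential equation driven by the geometric rough path lift of $W$, as in the well-posedness theory of Gussetti--Hocquet \cite{LLG1D}. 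The Itô--Lyons continuity of the rough solution map then yields, for almost every realization of the driving signal, continuous dependence of $u_t$ on $u^0$ in $H^1(\mathbb{S}^2)$; since $\varphi$ is bounded and continuous, the dominated convergence theorem upgrades this to continuity of $u^0\mapsto\mathbb{E}\,\varphi(u_t)$, that is, to the Feller property.

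With both ingredients in hand, the Krylov--Bogoliubov theorem furnishes an invariant measure $\mu$ on $H^1(\mathbb{S}^2)$, which proves the statement. I note that the existence of an \emph{ergodic} invariant measure follows subsequently from the Krein--Milman theorem applied to the nonempty, convex and (by the same tightness) weakly compact set of invariant probability measures.
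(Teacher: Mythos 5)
Your overall strategy coincides with the paper's: Krylov--Bogoliubov on $H^1(\mathbb{S}^2)$, with the Feller property obtained pathwise from the rough-path continuity of the It\^o--Lyons map plus dominated convergence, and tightness from an It\^o-type moment bound combined with the compact embedding $H^2\hookrightarrow H^1$ and Chebyshev. The Feller part of your argument is exactly the paper's (Theorem \ref{teo:Feller} and Lemma \ref{lemma:feller_property}), including the observation that classical Stratonovich calculus cannot close the $H^1$ estimate.

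There is, however, a genuine gap in your tightness step. You claim that the damping term yields a dissipation proportional to $\|\mathcal{H}(u_t)\|_{L^2}^2$ and hence a uniform-in-$T$ bound on $\frac{1}{T}\int_0^T\mathbb{E}\|u_t\|_{H^2}^2\,\dd t$. Neither assertion holds. Testing the equation against $\mathcal{H}(u)$, the precessional term is orthogonal and the damping term produces $\lambda_2\|u\times\mathcal{H}(u)\|_{L^2}^2$, \emph{not} $\lambda_2\|\mathcal{H}(u)\|_{L^2}^2$: the component of $\mathcal{H}(u)$ parallel to $u$ (which carries $u|\partial_x u|^2$) is invisible to the dissipation. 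This is why the paper must invoke the geometric identity of Lemma \ref{lemma:relazione_laplaciano},
\begin{align*}
\|\partial_x^2 u\|_{L^2}^2=\|\partial_x u\|_{L^4}^4+\|u\times\partial_x^2 u\|_{L^2}^2\, ,
\end{align*}
and the term $\|\partial_x u\|_{L^4}^4$ cannot be absorbed at the level of second moments: by interpolation it is bounded by $\|\partial_x u\|_{L^2}^3\|\partial_x^2 u\|_{L^2}$ plus lower order, and absorbing the factor $\|\partial_x^2 u\|_{L^2}$ would require control of $\int\mathbb{E}\|\partial_x u\|_{L^2}^6$, which the energy estimate of Lemma \ref{lemma:lemma_sup_grad} does not provide (it only bounds $\sup_t\mathbb{E}\|\partial_x u_t\|_{L^2}^2$ linearly in $t$). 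The paper circumvents this by proving only the fractional bound $\int_0^T\mathbb{E}\|\partial_x^2 u_r\|_{L^2}^{1/2}\,\dd r\lesssim 1+T$ (Lemma \ref{lemma:linear_growth_tightness}) and running Chebyshev with the function $\|\cdot\|_{H^2}^{1/2}$ in Lemma \ref{lemma:tightness_mu_t}; it explicitly remarks that only the power $1/2$ is attainable. Your conclusion (tightness) survives once the claimed second-moment bound is replaced by this weaker one, but the step as written would fail.
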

	
	We prove existence of an invariant measure (refer to Theorem \ref{teo:existence_invariant_measure}) by means of the classical Krylov-Bogoliubov theorem. The rough path formulation is useful when dealing with pathwise properties: here the deterministic calculus can be employed and one does not have to take in account integrability and measurability issues with respect to the probability measure. On the other hand classical stochastic calculus is of use when dealing with probabilistic properties, like the martingale property of the It\^o integral. 
	
	We use rough paths theory to show that the semigroup $(P_t)_t$ has the Feller property: the proof of this fact is a consequence of the continuity of the It\^o-Lyons map with respect to the initial condition and the noise, as already proved in \cite{LLG1D}. We include the proof and statements also in this paper for completeness of exposition. The Feller property is achieved pathwise in $H^1(\mathbb{S}^2)$ and no condition on the intensity of the noise is required. Rough paths theory is determinant to show the pathwise bound 
	\begin{align}\label{eq:continuity_intro}
	\sup_{t\in[0,T]}\|u_t-v_t\|^2_{H^1}\lesssim_{\exp(T),\mathbf{W}}\left[\|u^0\|^4_{H^1}+\|v^0\|^4_{H^1}\right] \|u^0-v^0\|^2_{H^1}\, ,
	\end{align}
	where $u,v$ are solutions to \eqref{LLG_intro} started at $u^0,v^0\in H^1(\mathbb{S}^2)$. The continuity with respect to the initial condition in $H^1(\mathbb{S}^2)$ implies the Feller property in Theorem \ref{teo:Feller}. In other words, the semigroup maps continuous bounded functions from $H^1(\mathbb{S}^2)$ to $\mathbb{R}$ to continuous bounded functions from $H^1(\mathbb{S}^2)$ to $\mathbb{R}$:
	\begin{theorem*}
	The semigroup $(P_t)_t$ has the Feller property, i.e. $P_t:C_b(H^1(\mathbb{S}^2))\rightarrow C_b(H^1(\mathbb{S}^2))$ for all $t>0$ .
	\end{theorem*}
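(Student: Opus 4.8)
Fix $t>0$ and $\varphi\in C_b(H^1(\mathbb{S}^2))$, and choose any $T\ge t$. Writing $u^x$ for the solution to \eqref{LLG} started at $x\in H^1(\mathbb{S}^2)$, the plan is to verify separately that $P_t\varphi$ is bounded and that it is continuous. Boundedness is immediate, since $|P_t\varphi(x)|=|\mathbb{E}[\varphi(u^x_t)]|\le\|\varphi\|_\infty$ uniformly in $x$. All the content of the statement therefore lies in the continuity of the map $x\mapsto P_t\varphi(x)$, for which I would argue by sequential continuity, $H^1(\mathbb{S}^2)$ being a metric space.

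The decisive tool is the rough path construction of the solution. The noise $W$ is enhanced to its Stratonovich rough path lift $\mathbf{W}$, and by \cite{LLG1D} the solution $u^x$ is the image of the pair $(x,\mathbf{W})$ under the It\^o--Lyons solution map, which is locally Lipschitz in the initial datum with the quantitative bound \eqref{eq:continuity_intro}. The crucial feature is that this estimate is \emph{pathwise}: for almost every realisation of the noise the rough path norm of $\mathbf{W}(\omega)$ on $[0,T]$ is finite, so the implicit constant is almost surely finite. Hence, given any sequence $x_n\to x$ in $H^1(\mathbb{S}^2)$, the norms $\|x_n\|_{H^1}$ stay bounded and \eqref{eq:continuity_intro} yields
\[
\sup_{s\in[0,T]}\|u^{x_n}_s(\omega)-u^{x}_s(\omega)\|_{H^1}^2\;\lesssim_{\mathbf{W}(\omega)}\;\big[\|x_n\|_{H^1}^4+\|x\|_{H^1}^4\big]\,\|x_n-x\|_{H^1}^2\;\longrightarrow\;0
\]
as $n\to\infty$, for almost every $\omega$.

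With this almost sure convergence in hand, the conclusion follows by dominated convergence. Since $\varphi$ is continuous on $H^1(\mathbb{S}^2)$ and $u^{x_n}_t(\omega)\to u^x_t(\omega)$ in $H^1$ almost surely, we obtain $\varphi(u^{x_n}_t)\to\varphi(u^x_t)$ almost surely, and these random variables are uniformly bounded by $\|\varphi\|_\infty$. Passing to the limit under the expectation gives $P_t\varphi(x_n)=\mathbb{E}[\varphi(u^{x_n}_t)]\to\mathbb{E}[\varphi(u^x_t)]=P_t\varphi(x)$, which is precisely the continuity sought.

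I expect the main obstacle to lie not in this soft measure-theoretic step but in the pathwise estimate \eqref{eq:continuity_intro} itself, which is exactly where rough paths theory is indispensable and is imported from \cite{LLG1D}: once the rough path is fixed the bound is deterministic, whereas the classical Stratonovich calculus would only deliver control of $\mathbb{E}\sup_s\|u_s-v_s\|^2_{H^1}$, which is too weak to run an almost sure dominated convergence argument in the $H^1$ topology. A secondary point that must be checked is the identification of the probabilistically defined Markov process generating $(P_t)_t$ with the It\^o--Lyons solution evaluated at the Brownian rough path, so that the two notions of $P_t\varphi$ coincide; this is furnished by the pathwise well-posedness theory of \cite{LLG1D}.
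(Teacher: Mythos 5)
Your proposal is correct and follows essentially the same route as the paper: boundedness is immediate from $\|\varphi\|_\infty$, and continuity is obtained by combining the pathwise local Lipschitz estimate \eqref{eq:cont_init_data_1} (proved via rough paths in Lemma \ref{lemma:feller_property}, imported from \cite{LLG1D}) with almost sure convergence of $u^{x_n}_t$ to $u^x_t$ in $H^1$ and dominated convergence under the expectation. Your closing remarks on why the pathwise (rather than moment) estimate is essential, and on the identification of the semigroup with the It\^o--Lyons solution map, match the paper's Remark \ref{remark:stratonovich_not_possible_feller} and the remark on the well-definedness of $(P_t)_t$.
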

    It is fundamental to stress that classical Stratonovich calculus does not allow to achieve easily \eqref{eq:continuity_intro}: indeed one needs to employ the It\^o's isometry to estimate the stochastic integral and therefore it is needed to take the expectation of the inequality. But the drift of the equation is multiplied by the initial conditions to the power four. As a consequence, by applying expectation, one is not able to close the estimate by means of the classical Gronwall's Lemma. We expand on this point in Remark \ref{remark:stratonovich_not_possible_feller}.

	The proof of tightness is achieved by the classical It\^o-Stratonovich calculus and, in presence of anisotropic energy, by means of Poincaré's inequality; we need to use that the It\^o integral is a martingale. Note that the Stratonovich solution of \eqref{LLG} introduced in \cite{brzezniak2019wong} and \cite{brzezniak_LDP} coincides with the pathwise solution in \cite{LLG1D}, a part from a set of null measure, it is possible to pass from one formulation to the other, under the hypothesis that $u^0\in \mathcal{L}^2(\Omega;H^1(\mathbb{S}^2))$. We formulate in a compact form the technical estimates necessary to conclude tightness (contained in Lemma \ref{lemma:energy_anisotropic}, Lemma \ref{lemma:linear_growth_tightness}).
	\begin{lemma*} There exists $C>0$ independent on time, so that for every $t>0$ it holds
		\begin{align*}
			\sup_{r\in[0,t]}\mathbb{E}\left[\|\partial_x u_r\|^2_{L^2}\right]+\int_{0}^{t}\mathbb{E}\left[\|u_r\times\partial^2_x u_r\|^2_{L^2}+\|\partial^2_x u_r\|^{1/2}_{L^2}\right]\dd r\lesssim_{\partial_xu^0,\partial_x h,g,\lambda_1,\lambda_2}C(1+t)\, .
		\end{align*}
	\end{lemma*}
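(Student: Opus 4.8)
The plan is to apply the It\^o formula to the energy $\mathcal{E}(u_t)$, which by the saturation constraint $|u_t|=1$ differs from $\tfrac12\|\partial_x u_t\|_{L^2}^2$ only by a term bounded by $\|A\|\,|D|$, so that controlling $\mathcal{E}$ is equivalent to controlling $\|\partial_x u_t\|_{L^2}^2$. Contracting the drift of \eqref{LLG} against $\nabla\mathcal{E}(u)=-\mathcal{H}(u)$ and using the pointwise identities $\langle\mathcal{H},u\times\mathcal{H}\rangle=0$ and $\langle\mathcal{H},u\times(u\times\mathcal{H})\rangle=-\|u\times\mathcal{H}\|_{L^2}^2$ (both valid since $|u|=1$, the latter from $|\mathcal{H}|^2=(u\cdot\mathcal{H})^2+|u\times\mathcal{H}|^2$) produces the dissipation $-\lambda_2\|u\times\mathcal{H}(u)\|_{L^2}^2$. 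Taking expectations annihilates the It\^o martingale $\int\langle-\mathcal{H},u\times h\rangle\,dB$ — here I invoke that the pathwise solution of \cite{LLG1D} coincides a.s.\ with the Stratonovich solution, so this really is a martingale — and leaves an identity for $\tfrac{d}{dt}\mathbb{E}[\mathcal{E}(u_t)]$ made of dissipation plus the Stratonovich--It\^o correction.

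The decisive point is the treatment of that correction. Converting $u\times\circ dW$ to It\^o form feeds two pieces into $\tfrac{d}{dt}\mathbb{E}[\mathcal{E}]$: the quadratic-variation term $\tfrac12\|\partial_x(u\times h)\|_{L^2}^2$ and the drift correction $-\tfrac12\langle\mathcal{H},(u\times h)\times h\rangle$. Expanding $|\partial_x u\times h|^2=|h|^2|\partial_x u|^2-(\partial_x u\cdot h)^2$ and, via $(u\times h)\times h=(u\cdot h)h-|h|^2 u$ together with $u\cdot\partial_x^2 u=-|\partial_x u|^2$, expanding $-\langle\partial_x^2 u,(u\times h)\times h\rangle=\int\big((\partial_x u\cdot h)^2-|h|^2|\partial_x u|^2\big)+(\text{terms in }\partial_x h)$, one finds that the two contributions of order $\|\partial_x u\|_{L^2}^2$ cancel exactly. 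I expect this cancellation — the analytic face of the fact that $u\times\circ dW$ merely rotates $u$ and is, for spatially constant $h$, an isometry of $\|\partial_x u\|_{L^2}$ — to be the main obstacle to formalise: one must carefully track the $g'$ and $\partial_x h$ remainders and bound the pieces linear in $\partial_x u$ by $\varepsilon\|\partial_x u\|_{L^2}^2+C_\varepsilon$, the rest by a pure constant.

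With the correction thus reduced to a bounded source, the (sign-indefinite but pointwise bounded) anisotropic remainders are controlled by Poincar\'e's inequality and absorbed, giving the Lyapunov estimate $\tfrac{d}{dt}\mathbb{E}[\mathcal{E}(u_t)]\le-\lambda_2\mathbb{E}[\|u\times\mathcal{H}\|_{L^2}^2]+C$ with $C=C(\partial_x h,g,\lambda_1,\lambda_2)$. As $\mathcal{E}$ is bounded below and the dissipation is nonnegative, integrating yields at once the time-uniform bound $\sup_{r\le t}\mathbb{E}[\|\partial_x u_r\|_{L^2}^2]\le C(\partial_x u^0,\dots)$ and the linearly growing bound $\int_0^t\mathbb{E}[\|u\times\mathcal{H}\|_{L^2}^2]\,dr\le C(1+t)$; discarding the bounded $g'$ part of $\mathcal{H}$ converts the latter into the asserted control of $\int_0^t\mathbb{E}[\|u_r\times\partial_x^2 u_r\|_{L^2}^2]\,dr$.

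For the $\|\partial_x^2 u\|_{L^2}^{1/2}$ term I would use the orthogonal splitting $|\partial_x^2 u|^2=|\partial_x u|^4+|u\times\partial_x^2 u|^2$ (once more from $u\cdot\partial_x^2 u=-|\partial_x u|^2$), whence $\|\partial_x^2 u\|_{L^2}^{1/2}\le\|\partial_x u\|_{L^4}+\|u\times\partial_x^2 u\|_{L^2}^{1/2}$. Setting $w:=u\times\partial_x u$, so that $|w|=|\partial_x u|$ and $\partial_x w=u\times\partial_x^2 u$, the one-dimensional Gagliardo--Nirenberg inequality gives $\|\partial_x u\|_{L^4}=\|w\|_{L^4}\lesssim\|\partial_x u\|_{L^2}^{3/4}\|u\times\partial_x^2 u\|_{L^2}^{1/4}+\|\partial_x u\|_{L^2}$, expressing the quartic norm through the two already-controlled quantities and so sidestepping the circularity $\|\partial_x u\|_{L^4}^4\le\|\partial_x^2 u\|_{L^2}^2$. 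Pairing this with H\"older in $\omega$ and in time, the time-uniform $H^1$ bound, and the linear bound on the dissipation integral, one checks that every resulting exponent is subcritical and each time integral grows at most like $C(1+t)$; the exponent $\tfrac12$ is precisely calibrated so that all these H\"older pairings close at linear order.
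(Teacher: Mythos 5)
Your architecture is the paper's: an It\^o/Stratonovich energy identity for the $H^1$ seminorm in which the quadratic-variation term cancels exactly against the Stratonovich--It\^o drift correction at order $\|\partial_x u\|_{L^2}^2$, followed by the geometric identity $\|\partial_x^2u\|_{L^2}^2=\|\partial_xu\|_{L^4}^4+\|u\times\partial_x^2u\|_{L^2}^2$ and a one-dimensional interpolation to handle $\|\partial_x^2u\|_{L^2}^{1/2}$. Two of your choices genuinely differ from the paper and are both sound. First, you work with $\mathcal{E}$ and $\mathcal{H}=\partial_x^2u-g'(u)$ rather than with $\|\partial_xu\|_{L^2}^2$ and $\partial_x^2u$; this makes the $\lambda_1$ term vanish and the dissipation $-\lambda_2\|u\times\mathcal{H}\|_{L^2}^2$ appear without any Young step on the anisotropy, at the small cost of splitting $\mathcal{H}$ at the end. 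Second, for the $L^4$ norm you apply Gagliardo--Nirenberg to $w=u\times\partial_xu$ (using $|w|=|\partial_xu|$ and $\partial_xw=u\times\partial_x^2u$), which replaces the full second derivative by the already-controlled $\|u\times\partial_x^2u\|_{L^2}$; the paper instead interpolates against $\|\partial_x^2u\|_{H^1}$ and must absorb the reappearing $\|\partial_x^2u\|_{L^2}^{1/2}$ by a calibrated Young inequality. Your variant removes that absorption step.

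The one step that does not hold as written is the claim that integrating $\tfrac{d}{dt}\mathbb{E}[\mathcal{E}(u_t)]\le-\lambda_2\mathbb{E}[\|u\times\mathcal{H}\|_{L^2}^2]+C$, using only that $\mathcal{E}$ is bounded below and the dissipation is nonnegative, ``yields at once the time-uniform bound'' $\sup_{r\le t}\mathbb{E}[\|\partial_xu_r\|_{L^2}^2]\le C$. With a nonzero constant source ($\partial_xh\neq0$ or $g\neq0$) this integration only gives $\mathbb{E}[\mathcal{E}(u_t)]\le\mathbb{E}[\mathcal{E}(u_0)]+Ct$, i.e.\ linear growth --- which suffices for the displayed estimate on the supremum, but not for your final pairing: with only $\sup_{r\le t}\mathbb{E}[\|\partial_xu_r\|_{L^2}^2]\lesssim 1+t$ the term $\int_0^t\mathbb{E}[\|\partial_xu_r\|_{L^2}^{3/2}]\,\dd r$ (and likewise $\int_0^t\mathbb{E}[\|\partial_xu_r\|_{L^2}]\,\dd r$) comes out superlinear, and the $\|\partial_x^2u\|^{1/2}$ bound no longer closes at order $1+t$. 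The repair is already in your toolbox: since $u\times\partial_xu\in H^1_0$ and $\partial_x(u\times\partial_xu)=u\times\partial_x^2u$, Poincar\'e gives $\|\partial_xu\|_{L^2}=\|u\times\partial_xu\|_{L^2}\le C_p\|u\times\partial_x^2u\|_{L^2}$, so $\int_0^t\mathbb{E}[\|\partial_xu_r\|_{L^2}^2]\,\dd r\le C_p^2\int_0^t\mathbb{E}[\|u_r\times\partial_x^2u_r\|_{L^2}^2]\,\dd r\lesssim 1+t$ directly from the dissipation integral (alternatively, the same Poincar\'e step turns the Lyapunov inequality into $\tfrac{d}{dt}\mathbb{E}[\mathcal{E}]\le-c\,\mathbb{E}[\mathcal{E}]+C$ and does give a genuinely time-uniform bound). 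Either route closes the argument; you should state which one you are using rather than asserting the uniform bound from monotonicity alone.
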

   Note that it is possible to bound the norm $\|\partial_x^2u\|_{L^2}$ with respect to the expectation only with the power $1/2$. This bound requires to use the geometry of the equation.  
  
   Let $\mu$ be an invariant measure and consider an initial condition distributed like $\mu$. Then there exists a pathwise stationary solution $w$ with initial condition $w^0$. If the initial condition $u^ 0$ used in the Krylov-Bogolioubov theorem has bounded second moment, then each initial condition distributed like $\mu$ has bounded second moment. It is also possible to conclude that $\textrm{supp}\mu\subset H^2(\mathbb{S}^2)$. These results are contained in Theorem \ref{teo:pathwise_stationary_sol} and Theorem \ref{teo:regolarity_pathwise_stationary_sol}.  The condition $\mathbb{E}[\|w^0\|_{H^1(\mathbb{S}^2)}^2]<+\infty$ allows to show that the set of invariant measures is compact. Since the set of invariant measures is not empty and convex, from the Krein-Milman theorem the following result (contained in Theorem \ref{teo:beta_neq_0}) follows:
   \begin{theorem*}
   	The semigroup $(P_t)_t$ on $H^1(\mathbb{S}^2)$ associated to \eqref{LLG} admits an ergodic invariant measure.
   \end{theorem*}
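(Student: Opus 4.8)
The plan is to realise the ergodic measure as an extreme point of the set of invariant measures and to extract it via the Krein--Milman theorem. Denote by $\mathcal{I}\subset\mathcal{P}(H^1(\mathbb{S}^2))$ the set of all invariant probability measures for $(P_t)_t$, viewed inside the locally convex space of finite signed Borel measures on $H^1(\mathbb{S}^2)$ equipped with the topology of weak convergence. I would establish that $\mathcal{I}$ is nonempty, convex and compact, and then identify its extreme points with the ergodic measures.

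Nonemptiness is exactly Theorem \ref{teo:existence_invariant_measure}, where moreover the initial condition in Krylov--Bogoliubov can be chosen with $\mathbb{E}[\|u^0\|^2_{H^1}]<+\infty$. Convexity is immediate from the linearity in $\mu$ of the invariance identity $\int_{H^1(\mathbb{S}^2)}P_t f\,\dd\mu=\int_{H^1(\mathbb{S}^2)}f\,\dd\mu$. Closedness in the weak topology follows from the Feller property of Theorem \ref{teo:Feller}: if $\mu_n\to\mu$ weakly with each $\mu_n\in\mathcal{I}$, then for every $f\in C_b(H^1(\mathbb{S}^2))$ one has $P_t f\in C_b(H^1(\mathbb{S}^2))$, so passing to the limit in $\int P_t f\,\dd\mu_n=\int f\,\dd\mu_n$ yields invariance of $\mu$.

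The central and least routine point is compactness of $\mathcal{I}$, for which I would prove tightness uniform over $\mathcal{I}$. By Theorem \ref{teo:pathwise_stationary_sol} every $\mu\in\mathcal{I}$ is the time-marginal law of a stationary solution $w$ with $\mathbb{E}[\|w^0\|^2_{H^1}]<+\infty$; inserting stationarity into the a priori estimates of Lemma \ref{lemma:energy_anisotropic} and Lemma \ref{lemma:linear_growth_tightness}, so that the time integrand is constant, and dividing the bound $\int_0^t(\cdots)\,\dd r\lesssim C(1+t)$ by $t$ before letting $t\to\infty$, produces a bound
\begin{align*}
\int_{H^1(\mathbb{S}^2)}\|y\times\partial_x^2 y\|^2_{L^2}\,\dd\mu(y)\leq C
\end{align*}
with $C$ independent of $\mu\in\mathcal{I}$. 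Using the spherical constraint to upgrade this into a uniform second-moment bound $\sup_{\mu\in\mathcal{I}}\int_{H^1(\mathbb{S}^2)}\|y\|^2_{H^2}\,\dd\mu\leq C'$ (cf.\ Theorem \ref{teo:regolarity_pathwise_stationary_sol}, which already gives $\mathrm{supp}\,\mu\subset H^2(\mathbb{S}^2)$), and invoking the compact embedding $H^2(D)\hookrightarrow H^1(D)$ (Rellich--Kondrachov on the bounded interval $D$), Chebyshev's inequality yields $\mu(\{\|y\|_{H^2}>R\})\leq C'/R^2$ uniformly in $\mu\in\mathcal{I}$. Since $H^2$-balls are precompact in $H^1(\mathbb{S}^2)$, this is precisely tightness of the family $\mathcal{I}$, and Prokhorov's theorem together with closedness gives compactness. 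The main obstacle is this uniform bound: the constant in Lemma \ref{lemma:energy_anisotropic} depends on the initial datum, so I must verify that the second-moment control propagates to \emph{every} invariant measure rather than only to the one produced by Krylov--Bogoliubov, and that the geometric passage from $\|y\times\partial_x^2 y\|_{L^2}$ to the full $H^2$-norm survives integration against $\mu$.

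Finally, $\mathcal{I}$ being nonempty, convex and compact in a locally convex Hausdorff topological vector space, the Krein--Milman theorem guarantees that $\mathrm{ext}(\mathcal{I})\neq\emptyset$. It then remains to recall the classical characterisation (see e.g.\ Da Prato--Zabczyk) that $\mu\in\mathcal{I}$ is ergodic if and only if it is an extreme point of $\mathcal{I}$; hence any extreme point furnishes the desired ergodic invariant measure, completing the proof.
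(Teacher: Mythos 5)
Your overall architecture coincides with the paper's: nonemptiness from Theorem \ref{teo:existence_invariant_measure}, convexity from linearity of $P_t^*$, closedness from the Feller property, tightness of $\mathcal{I}$ via a priori bounds on stationary solutions, Krein--Milman, and the characterisation of ergodic measures as extreme points (Proposition \ref{prop:inv_measure_extr_points}). The one step that does not go through as you state it is the ``upgrade'' to a uniform second-moment bound $\sup_{\mu\in\mathcal{I}}\int_{H^1(\mathbb{S}^2)}\|y\|^2_{H^2}\,\dd\mu\leq C'$. By Lemma \ref{lemma:relazione_laplaciano} one has $\|\partial_x^2 y\|^2_{L^2}=\|\partial_x y\|^4_{L^4}+\|y\times\partial_x^2 y\|^2_{L^2}$, and while your averaging argument does control $\int\|y\times\partial_x^2 y\|^2_{L^2}\,\dd\mu$ uniformly, the term $\int\|\partial_x y\|^4_{L^4}\,\dd\mu$ is out of reach: interpolation gives $\|\partial_x y\|^4_{L^4}\lesssim\|\partial_x y\|^3_{L^2}\|\partial_x y\|_{H^1}$, so after Young's inequality one would need moments of $\|\partial_x y\|_{L^2}$ of order at least four (in fact six), whereas the available energy estimates (Lemma \ref{lemma:energy_anisotropic}) only yield second moments. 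The paper explicitly records that even assuming $u^0\in\mathcal{L}^4(\Omega;H^1(\mathbb{S}^2))$ one cannot get $w^0\in\mathcal{L}^4(\Omega;H^1(\mathbb{S}^2))$, which is precisely the obstruction to your claimed bound.

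The repair is small and is what the paper does: work with the $1/2$-power of the $H^2$-norm instead. Taking the fourth root in the identity above gives $\|\partial_x^2 y\|^{1/2}_{L^2}\leq\|\partial_x y\|_{L^4}+\|y\times\partial_x^2 y\|^{1/2}_{L^2}$, the $L^4$-term is absorbed via $\|\partial_x y\|_{L^4}\lesssim\|\partial_x y\|^{3/4}_{L^2}\|\partial_x^2 y\|^{1/4}_{L^2}$ and Young's inequality, and only $\mathbb{E}[\|\partial_x y\|^{3/2}_{L^2}]\lesssim\mathbb{E}[\|\partial_x y\|^{2}_{L^2}]+C$ is needed (Lemma \ref{lemma:linear_growth_tightness}, Theorem \ref{teo:regolarity_pathwise_stationary_sol}). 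Chebyshev applied to $\|y\|^{1/2}_{H^2}$ then gives $\mu(\{\|y\|_{H^2}>R\})\leq C R^{-1/2}$ uniformly over $\mathcal{I}$, which is still tightness, and the rest of your argument proceeds unchanged.
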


\paragraph{Gibbs measures for spatially constant noise and small anisotropic energy.}
   Up to this point, the previous results do not require any restrictive assumption on the noise nor on the drift of equation \eqref{LLG_intro}. The shape of the noise does not play any role in the previous results, but it is important in what follows.
    We also employ the results in \cite{Neklyudov_Prohl} for some considerations on invariant measures of the LLG. We can interpret the Stratonovich integral in two basic ways 
   \begin{align*}
   	\int_{0}^{t} u_r\times h_1 \circ \dd B_r\, ,\quad\quad\quad\quad \int_{0}^{t} h_2 u_r\times  \circ \dd \bar{B}_r\, ,
   \end{align*}
	where $h_1\in H^1(D;\mathbb{R}^3)$, $B$ is a real valued Brownian motion, $h_2\in H^1(D;\mathbb{R})$ and $\bar{B}$ is a $\mathbb{R}^3$-valued Brownian motion. One could also consider linear combinations of the noises above. We can define precisely the shape of stationary solutions, by assuming $\partial_x h_1,\partial_x h_2=0$. Consider the equations 
	\begin{align*}
	\textrm{(A)}\quad\delta v_{s,t}=\mathcal{D}(v)_{s,t}+\int_{s}^{t}v_r\times h_1 \circ \dd B_r\, ,\quad \quad\quad \textrm{(B)}\quad\delta \bar{v}_{s,t}=\mathcal{D}(\bar{v})_{s,t}+\int_{s}^{t}h_2\bar{v}_r\times \circ \dd \bar{B}_r\, ,
	\end{align*}
	where $\mathcal{D}(\tilde{v})_{s,t}=\int_{s}^{t}[\lambda_2\tilde{v}_r\times(\tilde{v}_r\times A \tilde{v}_r)-\lambda_1\tilde{v}_r\times A \tilde{v}_r]\dd r$ for $\tilde{v}_r\in \mathbb{S}^2$ is the contribution coming from the anisotropic energy. We need a smallness assumption on the anisotropy.
	\begin{assumption}\label{assumption:anisotropic_intro_small}
		Assume that the anisotropy is of the form $g'(x):=Ax+b$, where $A$ is a real valued $3\times 3$ matrix and $b\in \mathbb{R}^3$.  Assume further that $\bar{G}:=2\sup_{i,j}|A_{i,j}|^2+|b|<\lambda_2/2C_p(2\lambda_2+|\lambda_1|)$, where $C_p$ is the Poincaré's constant associated to the domain $D$.
	\end{assumption}
	Refer to Theorem \ref{teo:sde_h_1} and Theorem \ref{teo:sde_h_2} for the next result.
	\begin{theorem*}
		Assume Assumption \ref{assumption:anisotropic_intro_small} and $\partial_x h_1=0 , h_1\neq 0\,\,(\textrm{resp. }\partial_x h_2=0, h_2\neq 0)$. Then every stationary solution to \eqref{LLG} is constant in space and it is a solution to $\textrm{(A)}$ (resp. $\textrm{(B)}$).		
	\end{theorem*}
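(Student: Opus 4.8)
The plan is to show that any stationary solution must be spatially constant, after which the reduction to the ODEs (A)/(B) is essentially automatic. Let $w$ denote a pathwise stationary solution with initial law equal to an invariant measure $\mu$; by Theorem \ref{teo:regolarity_pathwise_stationary_sol} we may assume $\mathrm{supp}\,\mu\subset H^2(\mathbb{S}^2)$, so that $w_t\in H^2$ for a.e. $t$ and the quantity $\|\partial_x w_t\|_{L^2}^2$ is well-defined and integrable. The key object is the expected exchange energy $\mathbb{E}\big[\tfrac12\|\partial_x w_t\|_{L^2}^2\big]$, which by stationarity is constant in $t$. The strategy is to derive an energy identity for $\tfrac12\|\partial_x w_t\|_{L^2}^2$ by applying the It\^o-Stratonovich calculus (as used for the tightness Lemma above), then integrate in expectation and exploit the constancy in time to force the dissipative terms to vanish.

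First I would compute, via Stratonovich calculus, the evolution of the exchange energy. Testing \eqref{LLG} against $-\partial_x^2 w_t$ and using the spherical constraint $|w_t|=1$ together with the Neumann boundary conditions, the $\lambda_1$-gyroscopic term and the Stratonovich noise term should produce no net contribution to the energy after taking expectation --- the noise is energy-preserving because $w\times\circ\,\dd W$ is tangent to the sphere and, since $\partial_x h=0$, differentiating in $x$ commutes cleanly with the multiplicative structure so that the It\^o correction integrates to a bounded martingale with zero expectation. The surviving terms are the strictly dissipative gradient-flow contribution $-\lambda_2\,\mathbb{E}\|w_t\times\partial_x^2 w_t\|_{L^2}^2$ (this is the harmonic-map dissipation) and the anisotropic contribution governed by $\mathcal{D}$, controlled via Assumption \ref{assumption:anisotropic_intro_small} and the Poincar\'e inequality.

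The crucial step is then the following: integrating the identity $\tfrac{\dd}{\dd t}\mathbb{E}\big[\tfrac12\|\partial_x w_t\|_{L^2}^2\big]=0$ (stationarity) over a time interval, I obtain an equation in which the nonnegative dissipation $\lambda_2\int_0^t\mathbb{E}\|w_r\times\partial_x^2 w_r\|_{L^2}^2\,\dd r$ is bounded by the anisotropic term. Here the smallness condition $\bar G<\lambda_2/\big(2C_p(2\lambda_2+|\lambda_1|)\big)$ is designed precisely so that, after applying Poincar\'e's inequality $\|\partial_x w\|_{L^2}^2\le C_p\|w\times\partial_x^2 w\|_{L^2}^2$ (valid on the sphere once one accounts for $\partial_x w\perp w$), the anisotropic term is strictly dominated by the dissipation. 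This yields $\mathbb{E}\int_0^t\|\partial_x w_r\|_{L^2}^2\,\dd r\le 0$, whence $\partial_x w_r=0$ for a.e. $(r,x)$ almost surely, i.e. $w$ is constant in space.

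Once spatial constancy is established, the exchange term $\partial_x^2 w$ vanishes identically and equation \eqref{LLG} collapses to precisely the finite-dimensional Stratonovich SDE (A) in the case $W=h_1 B$ (respectively (B) when $W=h_2\bar B$), since all remaining terms involve only $w$, $Aw$, and the noise, with $h_1$ (resp. $h_2$) constant in $x$. I expect the main obstacle to be the first step: justifying rigorously that the Stratonovich noise contributes nothing to the \emph{expected} exchange energy. This requires either converting to It\^o form and checking that the It\^o-Stratonovich correction is itself energy-neutral after integration by parts in $x$ (using $\partial_x h=0$), or invoking the equivalence of the rough and Stratonovich formulations noted in the discussion of tightness; care is needed because one must simultaneously control integrability in $\omega$ and regularity in $x$, which is exactly where the $H^2$-support of $\mu$ is indispensable.
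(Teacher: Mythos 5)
Your proposal is correct and follows essentially the same route as the paper: the paper combines the stationarity identity $\mathbb{E}[\|w_t\|^2_{H^1}]=\mathbb{E}[\|w^0\|^2_{H^1}]$ with the improved a priori estimate of Proposition \ref{pro:improved_estimate_anisotropy} (an expected energy identity in which the It\^o integral has zero mean, the term growing linearly in time carries only $\|\partial_x h\|^2_{L^2}=0$, and the anisotropic drift is absorbed into the dissipation via Lemma \ref{lemma:poincarre} and Assumption \ref{assumption:anisotropic_small}), forcing $\int_0^t\mathbb{E}[\|w_r\times\partial_x w_r\|^2_{L^2}]\,\dd r=0$ and hence, by the orthogonality $\partial_x w\perp w$ on the sphere, $\partial_x w\equiv 0$ and the collapse to the SDE (A)/(B). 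Your packaging of the same computation as ``stationarity of the expected exchange energy forces the dissipation to vanish'' is exactly the paper's argument, and your worry about the noise term is resolved there precisely as you anticipate, by passing to the It\^o form and using that the martingale part has vanishing expectation.
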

	We follow the discussion by Neklyudov, Prohl \cite{Neklyudov_Prohl} on the choice of the noise for \eqref{LLG} and its implications. The deterministic LLG equation admits steady states (i.e. stationary in time solutions), which are in general not unique. Denote by $\bar{u}$ a steady state to the deterministic equation and set $h_1=\bar{u}$.	Then both $\bar{u}$ and $-\bar{u}$ are solutions to equation $\textrm{(A)}$ and, in general, we can not expect to have a unique stationary solution to \eqref{LLG}. For other examples, see Proposition 1.17 in \cite{Banas_book} (see also \cite{Neklyudov_Prohl}). 
	
	If we consider instead equation $\mathrm{(B)}$ and we assume $h_2\neq 0$, then the stationary in time solutions to the deterministic LLG are not solutions to equation $\mathrm{(B)}$. We also observe that equation $\mathrm{(B)}$ describes the so called \enquote{spherical Brownian motion} with an anisotropic drift. In particular, equation $\mathrm{(B)}$ has a unique solution for $h_2\neq 0$ and admits a unique invariant measure. The proof of the properties of  $\mathrm{(B)}$ is contained in Neklyudov, Prohl \cite{Neklyudov_Prohl} and applies to this setting. We state the main result, connecting the solution to equation $\mathrm{(B)}$ with the LLG equation. Refer to Theorem \ref{teo:sde_h_2}.
	\begin{theorem*}
	Assume Assumption \ref{assumption:anisotropic_intro_small} and $h_2\neq 0$, $\partial_x h_2=0$. Then there exists a unique ergodic invariant measure $\mu$ for \eqref{LLG}.  The unique invariant measure $\mu$ can be identified with a Gibbs measure on $(\mathbb{S}^2,\mathcal{B}_{\mathbb{S}^2})$, i.e.
	\begin{align}\label{eq:gibbs_sphere}
	\mu(	\dd y)=\frac{\exp(-\frac{\lambda_2}{h_2}\bar{\mathcal{E}}(y))\dd y}{\int_{\mathbb{S}^2}\exp(-\frac{\lambda_2}{h_2}\bar{\mathcal{E}}(v))\dd v}\, ,
	\end{align}
	where $\bar{\mathcal{E}}(v)=\int_{D}g'(v)\cdot v\dd x=|D|g'(v)\cdot v$,  for $v\in \mathbb{S}^2$.
	
	\end{theorem*}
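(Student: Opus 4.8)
The plan is to collapse the infinite-dimensional problem onto the finite-dimensional diffusion $\mathrm{(B)}$ on $\mathbb S^2$ and then to pin down its invariant law explicitly through the stationary Fokker--Planck equation. By the preceding theorem, under Assumption~\ref{assumption:anisotropic_intro_small} with $h_2\neq0$ and $\partial_x h_2=0$, every stationary solution of \eqref{LLG} is constant in space and solves $\mathrm{(B)}$. First I would turn this into a bijection between the invariant measures of $(P_t)_t$ on $H^1(\mathbb S^2)$ and those of $\mathrm{(B)}$ on $\mathbb S^2$. If $\mu$ is $(P_t)$-invariant, the stationary solution issued from $\mu$ is a.s.\ constant in space, so $\mu$ is carried by the copy of $\mathbb S^2$ sitting inside $H^1(\mathbb S^2)$ as constant maps and its law is $\mathrm{(B)}$-invariant. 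Conversely, starting \eqref{LLG} from a constant initial datum with a $\mathrm{(B)}$-invariant law $\nu$ produces, by uniqueness of the solution in \cite{LLG1D} and the fact that a constant field annihilates the spatial part $\partial_x^2u$ of the drift, exactly the $\mathrm{(B)}$-diffusion; hence $\nu$ is $(P_t)$-invariant. This makes the reduction lossless.

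Next I would invoke the analysis of $\mathrm{(B)}$. Writing the Stratonovich noise $h_2\,v\times\circ\,\mathrm d\bar B$ in coordinates, the diffusion matrix is $h_2^2(\mathrm{Id}-v\otimes v)$, i.e.\ $h_2^2$ times the orthogonal projection onto $T_v\mathbb S^2$; since $h_2\neq0$ this is non-degenerate on the tangent bundle, so $\mathrm{(B)}$ is a uniformly elliptic diffusion on the compact connected manifold $\mathbb S^2$. Its semigroup is therefore strong Feller and irreducible and admits a unique, ergodic invariant measure, a fact established in \cite{Neklyudov_Prohl} that transfers verbatim to this setting. Through the bijection above this yields existence, uniqueness and ergodicity of the invariant measure $\mu$ for $(P_t)_t$; uniqueness alone already forces $\mu$ to be the single, hence extreme, point of the set of invariant measures, which re-confirms ergodicity.

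It remains to identify $\mu$ with \eqref{eq:gibbs_sphere}. Because the invariant measure is unique, it suffices to verify that the Gibbs density $\rho(v)\propto\exp(-\tfrac{\lambda_2}{h_2}\bar{\mathcal E}(v))$ lies in the kernel of the formal adjoint $\mathcal L^\ast$ of the generator of $\mathrm{(B)}$. The Itô--Stratonovich correction of $h_2\,v\times\circ\,\mathrm d\bar B$ is purely normal to the sphere and contributes no tangential drift, so on functions of $v$ the generator is $\mathcal L=\tfrac{h_2^2}{2}\Delta_{\mathbb S^2}+b\cdot\nabla_{\mathbb S^2}$, with $\tfrac{h_2^2}{2}$ times the Laplace--Beltrami operator as its second-order part. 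The crux is to split $b$ into a dissipative and a conservative part using the identities $v\times(v\times w)=(v\cdot w)v-w=-\mathrm{Proj}_{T_v\mathbb S^2}w$ and $(v\times w)\perp\{v,w\}$ for $|v|=1$: the $\lambda_2$-term becomes a negative multiple of $\nabla_{\mathbb S^2}\bar{\mathcal E}$ (gradient descent of the anisotropy energy), while the $\lambda_1$-term equals $\mp\lambda_1\,v\times\nabla_{\mathbb S^2}\bar{\mathcal E}$, the Hamiltonian (symplectic) vector field of $\bar{\mathcal E}$ for the area form of $\mathbb S^2$.

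With this decomposition the equation $\mathcal L^\ast\rho=0$ separates into two checks. The gradient part together with $\tfrac{h_2^2}{2}\Delta_{\mathbb S^2}$ is a reversible (detailed-balance) generator whose invariant density is the exponential of minus the potential over the diffusion constant; this is what fixes the inverse-temperature factor in front of $\bar{\mathcal E}$ through the fluctuation--dissipation balance between the damping $\lambda_2$ and the noise intensity. The conservative part preserves this density because it is $\rho$-divergence-free: $v\times\nabla_{\mathbb S^2}\bar{\mathcal E}$ is orthogonal to $\nabla_{\mathbb S^2}\bar{\mathcal E}$, so it does not move the level sets of $\bar{\mathcal E}$, and it is divergence-free on $\mathbb S^2$ as a symplectic gradient, whence $\div_{\mathbb S^2}(\rho\,v\times\nabla_{\mathbb S^2}\bar{\mathcal E})=0$. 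Thus $\rho$ solves $\mathcal L^\ast\rho=0$ and, by uniqueness, $\mu$ is the Gibbs measure \eqref{eq:gibbs_sphere}. I expect the main obstacle to be this last identification: not the divergence-free check, which is structural, but the precise bookkeeping of constants --- tracking the Stratonovich correction, the tangential projections and the normalisation of $\bar{\mathcal E}$ through to the exact exponent --- together with the upstream verification that every $(P_t)$-invariant measure is genuinely carried by the constant maps, so that passing to $\mathrm{(B)}$ loses nothing.
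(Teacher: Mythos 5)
Your proposal is correct and its skeleton coincides with the paper's: reduce every stationary solution to the spatially constant SDE $\mathrm{(B)}$ via the improved a priori estimate (Proposition \ref{pro:improved_estimate_anisotropy} forces $w\times\partial_x w=0$, hence $\partial_x w=0$ by orthogonality on the sphere), then conclude from uniqueness of the invariant measure of $\mathrm{(B)}$ and its Gibbs form. The difference is in how the finite-dimensional half is handled. The paper simply cites \cite{Neklyudov_Prohl} and Section 1.2 of \cite{Banas_book} both for well-posedness of $\mathrm{(B)}$ and for the identification of its unique invariant measure with the Gibbs density, whereas you carry out the stationary Fokker--Planck verification yourself: the computation of the diffusion matrix $h_2^2(\mathrm{Id}-v\otimes v)$, the observation that the Stratonovich correction $-h_2^2v$ is normal and builds the Laplace--Beltrami operator, and the split of the drift into a gradient part (detailed balance) and the symplectic field $v\times\nabla_{\mathbb S^2}\bar{\mathcal E}$ (which is $\rho$-divergence-free, explaining why $\lambda_1$ drops out of the invariant measure, cf.\ Remark \ref{Goldys}). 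This makes the argument self-contained and is essentially what the cited reference does. You are also more explicit than the paper about why uniqueness of the stationary solution yields uniqueness of the invariant measure upstairs: your bijection between $(P_t)$-invariant measures on $H^1(\mathbb S^2)$ and $\mathrm{(B)}$-invariant measures on $\mathbb S^2$ (support on constant maps in one direction, uniqueness of the LLG solution from a constant datum in the other) is a worthwhile clarification that the paper passes over. The one point you rightly flag as delicate is the constant in the exponent: the reversible part of your generator is $\tfrac{h_2^2}{2}\Delta_{\mathbb S^2}-\lambda_2\nabla_{\mathbb S^2}V\cdot\nabla_{\mathbb S^2}$ with $V$ proportional to $\bar{\mathcal E}$, whose invariant density carries the factor $2\lambda_2/h_2^2$ rather than the $\lambda_2/h_2$ printed in \eqref{eq:gibbs_sphere}; make sure your bookkeeping of the symmetrisation hidden in $g'$ and of the factor $|D|$ in $\bar{\mathcal E}$ is reconciled with the paper's normalisation before asserting exact agreement. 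This is a normalisation issue, not a gap in the method.
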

	Note that the Gibbs measure above contains the anisotropic terms, but no derivatives. If there is no anisotropy, then the Gibbs measure reduces to the uniform measure on the sphere. 
	Observe also that the measure $\mu$ in \eqref{eq:gibbs_sphere} on $(\mathbb{S}^2,\mathcal{B}_{\mathbb{S}^2})$ is consistent with \eqref{eq:gibbs_intro}. Indeed consider the restriction of $ L^2(D;\mathbb{S}^2)\cap H^1(D;\mathbb{R}^3)$ to the space of constant maps $\phi_v:D\rightarrow \mathbb{S}^2$ so that $\phi_v(x):=v$ for all $x\in D$ and for all $v\in \mathbb{S}^2$. The space of constant maps $\{\phi_v\}_{v\in\mathbb{S}^2}$ can be identified with the sphere $\mathbb{S}^2$. Thus the energy \eqref{eq:energy_intro} evaluated in each $\phi_v$ coincides with
	\begin{align*}
	\mathcal{E}(\phi_v)=\int_{D}g(\phi_v)\cdot\phi_v \dd x=|D|g(\phi_v)\cdot\phi_v =|D|g(v)\cdot v=\bar{\mathcal{E}}(v)\, ,\quad\forall v\in\mathbb{S}^2\, .
	\end{align*}
	This shows the identification of the integration domain $H^1(\mathbb{S}^2)$ with $\mathbb{S}^2$ in this particular case, hence the correspondence of the formal Gibbs measure in \eqref{eq:gibbs_intro} with \eqref{eq:gibbs_sphere}.
	
	We can compare the stationary solutions to the LLG with the works of M. Röckner, B. Wu, R. Zhu, X. Zhu \cite{Rock_Zhu_1}, \cite{Rock_Zhu_2} on  stochastic heat equation taking values on a Riemannian manifold. The authors employ the theory of Dirichlet forms to prove existence of martingale solutions to the stochastic heat equation taking values on a Riemannian manifold. More specifically, they prove that the Wiener (Brownian bridge) measure is an  invariant measure on the Riemannian path loop space. In a simplified case, we are able to prove that the Brownian motion on the sphere is a stationary solutions to \eqref{LLG} and thus that the invariant measure associated to the LLG is the one of a Brownian motion on the sphere. This fact is in accordance to the results obtained in \cite{Rock_Zhu_1}, \cite{Rock_Zhu_2}.
	
	
	\paragraph{Long time behaviour for $\partial_x h_2=0, h_2\neq 0$ and $g\equiv 0$.}
	We establish also a time behaviour result under the condition $\partial_x h_2=0$: every solution converges to its spatial average, as a consequence of the Poincaré-Wirtinger theorem. In absence of anisotropic energy and provided $\partial_x h_2=0$, we observe that every solution to \eqref{LLG}, independently on the initial condition, is synchronizing with the stationary solution up to a constant (independent on space and time).  The result is contained in Proposition \ref{prop:long_time_behaviour} and modifies the proof for the stochastic mean curvature flow from Dabrock, Hofmanová, Rogers \cite{Dabrock_Hofmanova_Roger}.
	\begin{proposition*}
		Assume $h_2\neq 0$, $\partial_x h_2=0$ and no anisotropic energy, i.e. $g\equiv 0$. 
		For large times every solution $u$  to \eqref{LLG} converges to a Brownian motion $w$ with values on the sphere, where $w$ is the unique solution to 
		\begin{align*}
		\delta w_{s,t}=h_2\int_{s}^{t} w_r \times \circ \dd \bar{B}_r\, .
		\end{align*}
		The convergence occurs up to a constant, i.e. for $T\rightarrow +\infty$ for $t>0$
		\begin{align*}
		[u_{T+t}-w_{T+t}]^2-[u_{T}-w_T]^2=0
		\end{align*}
		in $C_b([0,+\infty);L^1)$ $\mathbb{P}$-a.s.
	\end{proposition*}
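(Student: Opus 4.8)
The plan is to work with the difference $u_t - w_t$, where $u$ is an arbitrary solution to \eqref{LLG} with $g\equiv 0$ and $w$ is the unique spherical Brownian motion solving $\delta w_{s,t}=h_2\int_s^t w_r\times\circ\dd\bar B_r$. Since $\partial_x h_2=0$ and there is no anisotropic energy, both $u$ and $w$ are driven by the \emph{same} noise $\bar B$ entering multiplicatively through the cross product, so the Stratonovich terms should cancel up to curvature corrections. First I would write down the equation for the difference, exploiting that the deterministic part of $u$ is $\lambda_1 u\times\partial_x^2 u-\lambda_2 u\times(u\times\partial_x^2 u)=\lambda_1 u\times\partial_x^2 u+\lambda_2(\partial_x^2 u+u|\partial_x u|^2)$ on the sphere, while $w$ being constant in space has no exchange contribution. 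The key quantity to track is $[u_t-w_t]^2$ integrated over $D$, i.e. the $L^1$ norm of the squared difference; I would apply the Stratonovich chain rule to $|u_t-w_t|^2$ pointwise.

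The central computation is the evolution of $\tfrac12\int_D |u_t-w_t|^2\dd x$. The idea is that, because both $u_t(x)$ and $w_t$ lie on $\mathbb S^2$, one has $|u-w|^2 = 2 - 2\,u\cdot w$, so it suffices to study $\tfrac{\dd}{\dd t}\int_D u_t\cdot w_t\,\dd x$. Applying the Stratonovich product rule, the noise contributions $(u\times h_2\circ\dd\bar B)\cdot w + u\cdot(w\times h_2\circ\dd\bar B)$ cancel by the scalar triple product identity $(u\times v)\cdot w = -u\cdot(v\times w)$ together with the fact that the \emph{same} $h_2\bar B$ drives both equations. This is exactly where using Stratonovich (rather than It\^o) calculus is essential: the ordinary chain rule applies and no It\^o correction survives. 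What remains is the deterministic exchange/drift contribution $\int_D[\lambda_1 u\times\partial_x^2 u+\lambda_2(\partial_x^2 u + u|\partial_x u|^2)]\cdot w\,\dd x$; integrating by parts in $x$ (using the Neumann condition $\partial_x u=0$ on $\partial D$) and using that $w$ is spatially constant so $\partial_x w=0$, the exchange terms should reduce to something that vanishes, leaving $\tfrac{\dd}{\dd t}[u_t-w_t]^2=0$ in $L^1$. This is the mechanism behind the claimed conservation $[u_{T+t}-w_{T+t}]^2-[u_T-w_T]^2=0$.

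Concretely, I would adapt the argument of Dabrock, Hofmanov\'a, Rogers \cite{Dabrock_Hofmanova_Roger} for the stochastic mean curvature flow: one tests the difference equation against a suitable functional and shows the time derivative of the relevant energy is nonpositive or zero. The spatial-average / Poincar\'e--Wirtinger step enters because $\partial_x h_2 = 0$ forces the noise to act uniformly in space, so the fluctuations of $u$ around its spatial mean are damped by the exchange term while the mean synchronises with $w$; I would make this precise by decomposing $u_t = \bar u_t + (u_t-\bar u_t)$ with $\bar u_t$ the spatial average and controlling the oscillation via Poincar\'e's inequality, showing it does not grow. The recurrence/synchronisation statement then follows from the conservation identity combined with the recurrence of $w$ on the compact manifold $\mathbb S^2$ under its (uniform, since $g\equiv0$) invariant measure.

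The main obstacle I anticipate is the rigorous justification of the Stratonovich chain rule and the integration by parts at the required regularity: $u$ lives only in $L^\infty(H^1)\cap L^2(H^2)$, so $\partial_x^2 u$ is merely $L^2$ in space and the pointwise manipulations $|u|=1$, $u\cdot\partial_x^2 u = -|\partial_x u|^2$ must be handled in a weak/distributional sense, and the cancellation of the noise terms must be verified for the genuine (rough or Stratonovich) solution rather than a formal computation. A further subtlety is that the two solutions $u$ and $w$ must be coupled on the \emph{same} probability space with the same driving $\bar B$; I would invoke the identification (noted in the excerpt) between the rough-path solution of \cite{LLG1D} and the Stratonovich solution of \cite{brzezniak2019wong,brzezniak_LDP} to ensure the cancellation is legitimate, and then pass the pointwise identity to the $C_b([0,+\infty);L^1)$ statement by the a priori bounds on $u$ and the continuity of trajectories.
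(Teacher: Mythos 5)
Your overall skeleton is the right one (same driving noise for $u$ and $w$, cancellation of the Stratonovich terms by the antisymmetry of the cross product, integration by parts against the spatially constant $w$, Poincar\'e--Wirtinger, following Dabrock--Hofmanov\'a--Roger), but the central analytical claim is wrong: the drift contribution does \emph{not} vanish, and $[u_t-w_t]^2$ is \emph{not} exactly conserved. Writing $b(u)=\lambda_1 u\times\partial_x^2u+\lambda_2(\partial_x^2u+u|\partial_xu|^2)$, one has $b(u)\cdot u=0$ pointwise and, by integration by parts with the Neumann condition and $\partial_x w=0$, $\int_D[\lambda_1u\times\partial_x^2u+\lambda_2\partial_x^2u]\cdot w\,\dd x=0$; but the term $\lambda_2\int_D u|\partial_xu|^2\cdot w\,\dd x$ survives and is generically nonzero. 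Consequently $\frac{\dd}{\dd t}\int_D|u_t-w_t|^2\dd x\neq 0$ in general, and the statement is an asymptotic one (the limit as $T\to+\infty$ of the discrepancy is zero), not a conservation law. The paper's proof handles exactly this surviving term: it introduces the random constant $\alpha=\frac{1}{|D|}\int_D|u^0-w^0|^2\dd x+\int_0^{+\infty}\frac{1}{|D|}\int_D b(u_r)\cdot(u_r-w_r)\dd x\,\dd r$ and shows $|\alpha|<+\infty$, so that $\big|\int_D[(u_t-w_t)\cdot(u_t-w_t)-\alpha]\dd x\big|=\big|\int_t^{+\infty}\int_D b(u_r)\cdot(u_r-w_r)\dd x\,\dd r\big|\to0$.

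The ingredient your proposal is missing is the quantitative decay mechanism that makes this tail vanish, namely the \emph{pathwise} energy identity available precisely because $\partial_x h_2=0$ (Remark \ref{remark:partial_x_h_0}, estimate \eqref{eq:unif_bound_time_remark}): $\lambda_2\int_0^{+\infty}\|u_r\times\partial_x^2u_r\|^2_{L^2}\dd r\leq\|\partial_xu^0\|^2_{L^2}$, which together with Poincar\'e's inequality gives $\int_0^{+\infty}\|\partial_xu_r\|^2_{L^2}\dd r<+\infty$ and the monotonicity $\sup_{t\geq T}\|\partial_xu_t\|_{L^2}\leq\|\partial_xu_T\|_{L^2}$. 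These two facts yield both the finiteness of $\alpha$ (bounding $\big|\int_0^{+\infty}\int_D u_r|\partial_xu_r|^2\cdot w_r\,\dd x\,\dd r\big|$) and, via Poincar\'e--Wirtinger applied to $(u_t-w_t)\cdot(u_t-w_t)-\alpha$, the convergence $\sup_{t\geq T}\|(u_t-w_t)\cdot(u_t-w_t)-\alpha\|_{L^1}\to0$. Your vague statement that the exchange term ``damps the oscillation'' gestures at this but does not supply it; without the global-in-time integrability and monotone decay of $\|\partial_xu_t\|_{L^2}$ there is no reason for the surviving drift term to become negligible, and the proof does not close.
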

	We refer to Appendix \ref{sec:spherical_BM} for a rigorous definition of the spherical Brownian motion and for further references.
	Since the spherical Brownian motion is recurrent, then every solution to \eqref{LLG} satisfying the hypothesis of Proposition \ref{prop:long_time_behaviour} undergoes magnetization reversal for large times.
	\begin{corollary*}
		Assume $h_2\neq 0$, $\partial_x h_2=0$ and no anisotropic energy, i.e. $g\equiv 0$.  Every solution to \eqref{LLG} undergoes magnetization reversal for large times.
	\end{corollary*}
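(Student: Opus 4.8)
The plan is to read the corollary off Proposition~\ref{prop:long_time_behaviour} together with the recurrence of the spherical Brownian motion recalled in Appendix~\ref{sec:spherical_BM}: reversal is a qualitative feature of the limiting process $w$, and the synchronisation transports it to the genuine solution $u$. First I would fix a precise meaning for magnetisation reversal suited to this regime. Under the hypotheses $\partial_x h_2=0$, $h_2\neq 0$ and $g\equiv 0$, both $u_t$ and $w_t$ are spatially constant for large times --- the former by the Poincaré-Wirtinger convergence to the spatial average underlying Proposition~\ref{prop:long_time_behaviour}, the latter by construction of equation~(B) --- so I identify each with a single unit vector $\bar{u}_t,\bar{w}_t\in\mathbb{S}^2$ and read reversal as the statement that the projection $\bar{u}_t\cdot e$ onto a fixed reference direction $e$ changes sign at arbitrarily large times, equivalently that $\bar u$ enters a neighbourhood of some orientation and of its antipode along sequences of times tending to $+\infty$.

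Next I would invoke recurrence of $w$. The solution of $\delta w_{s,t}=h_2\int_s^t w_r\times\circ\,\dd\bar{B}_r$ is, after the deterministic time change by the constant $h_2^2$, a Brownian motion on the compact manifold $\mathbb{S}^2$, whose generator is a positive multiple of the Laplace-Beltrami operator and whose unique invariant measure is the uniform one; by compactness it is recurrent. Consequently, $\mathbb{P}$-a.s. the process $w$ visits every nonempty open subset of $\mathbb{S}^2$ along a sequence of times $t_n\to +\infty$, and in particular there are arbitrarily large times at which $\bar{w}_{t_n}$ lies near $-e$ and others at which it lies near $e$: this is reversal for $w$.

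The hard part will be transferring this to $u$ through the synchronisation \enquote{up to a constant}. Proposition~\ref{prop:long_time_behaviour} gives $\int_D|u_{T+t}-w_{T+t}|^2\,\dd x-\int_D|u_T-w_T|^2\,\dd x\to 0$ as $T\to +\infty$, so in the spatially constant regime the squared distance $|\bar{u}_t-\bar{w}_t|^2=2-2\,\bar{u}_t\cdot\bar{w}_t$ stabilises, i.e. the geodesic angle between $\bar{u}_t$ and $\bar{w}_t$ converges to a fixed $\theta_\infty\in[0,\pi]$. I would then argue that a fixed angular offset cannot suppress the recurrent exploration: for any target $p\in\mathbb{S}^2$ there is a point at angular distance $\theta_\infty$ from $p$, the recurrent $\bar{w}$ enters every neighbourhood of that point infinitely often, and since $\bar{u}$ remains at asymptotically fixed distance $\theta_\infty$ from $\bar{w}$ it must in turn approach $p$ along a sequence $t_n\to +\infty$; taking $p=\pm e$ yields reversal of $u$. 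The only genuine obstacle is that the stabilisation of the angle is asymptotic rather than exact, so $\bar{u}$ only approaches the reversed states; this is handled by a soft continuity and compactness argument on $\mathbb{S}^2$ together with the strict positivity of the uniform invariant measure on open sets, which upgrades \enquote{approaches} to the infinitely-often sign changes demanded by the definition, and hence proves the corollary.
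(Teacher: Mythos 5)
Your overall route is the paper's: the corollary is obtained by combining the synchronisation of Proposition~\ref{prop:long_time_behaviour} with the recurrence of the spherical Brownian motion recalled in Appendix~\ref{sec:spherical_BM} (see also Remark~\ref{remark:magn_rev}); the paper itself gives no more detail than that one-sentence deduction. The difficulty sits precisely in the transfer step, and the argument you supply for it does not work. Proposition~\ref{prop:long_time_behaviour} controls only the scalar $|u_t-w_t|^2$, i.e.\ the angle $\theta_\infty$ between $\bar u_t$ and $\bar w_t$; it carries no information about the \emph{direction} of the offset. Knowing that $\bar w_{t_n}$ enters a neighbourhood of a point $q$ with $d(q,p)=\theta_\infty$ only places $\bar u_{t_n}$ near the circle of angular radius $\theta_\infty$ centred at $q$, which passes through $p$ but is not concentrated at $p$; ``it must in turn approach $p$'' is a non sequitur. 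A fixed angular offset can perfectly well suppress recurrence: if, say, $\bar u_t$ were always the point at distance $\theta_\infty$ from $\bar w_t$ displaced towards a fixed pole $N$, then $d(\bar u_t,\bar w_t)\equiv\theta_\infty$ and $\bar w$ is recurrent, yet $\bar u$ never enters the spherical cap of radius $\theta_\infty$ about $-N$. So your claim that $\bar u$ visits every open set is not justified by the distance information alone, and the obstacle you flag (asymptotic rather than exact stabilisation of the angle) is not the real one.

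For the weaker statement actually asserted (sign changes of $\bar u_t\cdot e$) a correct transfer is available from the elementary bound $|\bar u_t\cdot e-\bar u_t\cdot\bar w_t|\le|e-\bar w_t|$: along times at which $\bar w_t$ is within $\epsilon$ of $e$ one gets $\bar u_t\cdot e\ge\cos\theta_\infty-\epsilon-o(1)$, and along times at which $\bar w_t$ is within $\epsilon$ of $-e$ one gets $\bar u_t\cdot e\le-\cos\theta_\infty+\epsilon+o(1)$; recurrence of $\bar w$ supplies both families of times, so $\bar u_t\cdot e$ changes sign infinitely often whenever $\cos\theta_\infty\neq 0$. This still leaves the degenerate case $\theta_\infty=\pi/2$ untreated, and it relies on the reduction to spatially constant profiles, which is itself only asymptotic. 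In short: you follow the same strategy as the paper, but the key deduction ``recurrence of $w$ plus constant offset implies recurrence (or reversal) of $u$'' is asserted in both the paper and your proposal, and the specific justification you offer for it is incorrect.
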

We conclude with several questions left open, namely the identification of an ergodic invariant measure with the Gibbs measure if $\partial_x h_2$ is not null and the number of invariant measures in that case. It would also be interesting to study the long time behaviour for \eqref{LLG} if $\partial_x h_2\neq 0$ and in presence of anisotropic energy. 

\paragraph{Organisation of the paper:} In Section~\ref{sec:notation} we introduce some basic notations and the rough driver. In Section~\ref{sec:def_LLG}, we introduce the definition of solution to \eqref{LLG} and we recall some known results. In Section \ref{sec:inv_measure}, we recall the basics concerning invariant measures and the Krylov-Bogoliubov theorem: in Section~\ref{sec:strong_feller} we show that the Feller property holds and in Section~\ref{sec:tightness} we prove the tightness of the sequence of measures in the Krylov-Bogoliubov theorem. In Section~\ref{sec:ergodic_inv_measure}, we employ the Krein-Milman theorem to achieve the existence of an ergodic measure. We identify the invariant measure with the unique Gibbs measure in absence of anisotropy and if $\partial_x h_2=0$, $h_2\neq 0$ in Section~\ref{sec:uniq_stat_sol_inv_measure}. In presence of anisotropy and provided  $\partial_x h_2=0$, $h_2\neq 0$, we determine the invariant measure to be the Gibbs measure in Section~\ref{sec:uniq_stat_sol_inv_measure}. In Section~\ref{sec:large_time_behaviour}, we address the long time behaviour problem by showing the convergence of the solutions to the LLG to the stationary solution, under the condition $\partial_x h_2=0$, $h_2\neq 0$ and in absence of anisotropic energy.

\paragraph{Acknowledgements:} The author is warmly grateful to Professor M.~Hofmanov\'a for her many advices and constant help to run this project and for pointing out the procedure for the long time behaviour in \cite{Dabrock_Hofmanova_Roger}. The author is also thankful to Professor B. Goldys many interesting discussions in the last phase of this work and for pointing out Remark \ref{Goldys}, the sensitivity of the equation to the choice of the noise and for pointing out several references \cite{Rock_Zhu_1,Rock_Zhu_2}. The author thankfully acknowledges the financial support by the German Science Foundation DFG via the Collaborative Research Center SFB1283, Project B7. The author is also thankful to the organizers of the GDR-TRAG Young researchers meeting in Paris in December 2021. The author wishes to thank the anonymous reviewer for the useful remarks.

\section{Notations and setting}\label{sec:notation}
\subsection{Frequently used notations.}
For $a,b\in \R^3$, we denote by $a\cdot b$ the inner product in $\R^3$, and by $|\cdot|$ the norm inherited from it (we will not distinguish between the different dimensions, it will be clear from the context). We recall the definition of cross product
$a\times b:=(a_2b_3-a_3b_2,a_3b_1-a_1b_3,a_1b_2-a_2b_1)$, for $a\equiv(a_1,a_2,a_3),b\equiv(b_1,b_2,b_3)\in \R^3$.
We denote by $\mathbb{S}^2:=\{a\in\mathbb{R}^3:|a|_{\mathbb{R}^3}=1 \}$ the unit sphere in $\R ^3$.
 The space of bounded linear operators from a Banach space $E$ to itself is denoted by $\mathscr L(E)$ and by $\mathbf 1_{3\times 3}$ the identity operator in $\mathscr{L}(\mathbb{R}^3)$. The space $\mathscr{L}_a(\mathbb{R}^3)$ is the subspace of $\mathscr{L}(\mathbb{R}^3)$ of antisymmetric matrices, i.e. $\Gamma\in \mathscr{L}(\mathbb{R}^3)$ such that $\Gamma v\cdot w = -v\cdot(\Gamma w)$ $\forall v,w\in\R^3$. Given $a,b\in\mathbb{R}^3$, we denote by $a\otimes b$ the tensor product in $\mathbb{R}^3$, i.e. $a\otimes b=(a_ib_j)_{i,j=1,2,3}$.

\paragraph{Paths and controls}
Let $J\subset [0,T]$ be a subinterval and $J^2:=J\times J$. For convenience of notations, we will write `$\forall s \leq t \in J$'  instead of `$\forall (s,t)\in J^2$ such that $s \leq t.$'
For a one-index map $h$ defined on $J$, we use the notation $\delta h_{s,t}:=h_t-h_s$ for $s \leq t \in J$.
For a two-index map $H$ defined on $J^2$, we define $\delta H_{s,u,t}:=H_{s,t}-H_{s,u}-H_{u,t}$ for $s \leq u\leq t\in J$. 
We call \textit{increment} any two-index map which is given by $\delta h_{s,t}$ for some $h=h_t$.
Observe that increments are exactly those two-index elements $H$ for which $\delta H_{s,u,t}\equiv 0$.

We say that a continuous map $\omega\colon \{(s,t)\in J:s\leq t\}\rightarrow [0,+\infty)$ is a \textit{control on $J$} if $\omega$ is continuous, $\omega(t,t)=0$ for any $t\in J$ and if it is \textit{super-additive}, namely for all $s \leq u \leq t$
\begin{align*}
\omega(s,u)+\omega(u,t)\leq \omega (s,t)\, .
\end{align*}
Given a control $\omega$ on $J:=[s,t]$, we also denote $\omega(J):=\omega(s,t)$.

Let $(E,\|\cdot\|_E)$ be a Banach space and $p>0$, we denote by $\mathcal{V}^p_{2}(J;E)$ the set of two-index maps $H\colon\{(s,t)\in J^2 :s\leq t\}\rightarrow E$ that are continuous in both the components such that $H_{t,t}=0$ for all $t\in J$ and there exists a control $\omega$ on $J$ such that
\begin{equation} \label{condition_control_1}
\|H_{s,t}\|_E\leq \omega(s,t)^{1/p}
\end{equation}
for all $s\leq t \in J$. 
The space $\mathcal{V}^p_{2}(J;E)$ is equivalently defined as the space of two-index maps of finite $p$-variation, namely $H$ belongs to $\mathcal{V}^p_{2}(J;E)$ if and only if 
\[
\|H\|_{\mathcal{V}_{2}^{p}(J;E)}:=\sup_{\mathcal{P}}\Big (\sum\nolimits_{[s,t]\in\pi}\|H_{s,t}\|_E^p\Big)^\frac{1}{p}<+\infty\, ,
\]
where the supremum is taken over the set of partitions $\mathcal{P}=\{[s_0,s_1],\dots [s_{n-1},s_{n}]\}$ of $J$.
Moreover, the semi-norm $\|\cdot\|_{\mathcal{V}^p_{2}(J;E)}$ coincides with the infimum of $\omega(J)^p$ over the set of controls $\omega$ such that the condition \eqref{condition_control_1} holds (see \cite{hocquet2017energy} or \cite[Paragraph 8.1.1]{FrizVictoire}).	
Analogously we can define $\mathcal{V}^p(J;E)$ as the space of all continuous paths $g\colon J\to E$ such that $\delta h\in\mathcal{V}^p_{2}(J;E)$, equipped with the norm $\|h\|_{\V^p(J;E)}=\sup_{t\in J}\|h_t\|_{E}+\|\delta h\|_{\mathcal{V}_{2}^{p}(J;E)}$.

We will sometimes need to work with local versions of the previous spaces: we define $\mathcal{V}_{2,\mathrm{loc}}^{p}(J;E)$ as the space of maps $H\colon\{(s,t)\in J^2:s\leq t\}\rightarrow E$ such that there exists a finite covering $(J_k)_{k\in K}$ of $J$, $K\subset \mathbb{N}$, so that $H\in \mathcal V_{2}^{p}(J_k;E)$ for all $k\in K$. 
We define the linear space 
\begin{align*}
\mathcal{V}^{1-}_{2,\mathrm{loc}}(J;E):=\bigcup_{0<p<1} \mathcal{V}_{2,\mathrm{loc}}^{p}(J;E)\, .
\end{align*}
Fix $T>0$. We denote by $\mathcal{V}^p(E)=\mathcal{V}^p([0,T];E),$. Analogously, we use the short hand \( \mathcal{V}_2^p(E)=\mathcal{V}_2^p([0,T];E) \). We denote the space of continuous functions defined on $[0,T]$ with values in $E$ by $C([0,T];E)$ . The space $C^k_b([0,+\infty); \mathbb{R})$ indicates the space of bounded and continuously differentiable functions with bounded derivatives, defined on $[0,+\infty)$ with real values, for $k\in \mathbb{N}$. The notation $\dot{h}$ denotes the time derivative of $h\in C^k_b([0,+\infty); \mathbb{R})$.
\paragraph{Sobolev spaces}
Let $D\subset\mathbb{R}$ be an open bounded interval of $\mathbb{R}$. Denote by $\mathbb{N}$ the space of natural numbers and $\mathbb{N}_0:=\mathbb{N}\cup\{0\}$.
For $n\in\mathbb{N}$, we consider the usual Lebesgue spaces $L^p:=L^p(D ;\R^n)$, for $p\in[1,+\infty]$ endowed with the norm $\|\cdot\|_{L^p}$ and the classical Sobolev spaces $W^{k,q}:=W^{k,q}(D;\R ^n)$ for integer $q\in [1,+\infty]$ and $k\in\mathbb{N}$ endowed with the norm $\|\cdot\|_{W^{k,q}}$.
We also denote by $H^k:=W^{k,2}(D ;\R ^n)$. 
We need to consider also functions taking values in $\mathbb{S}^2\subset\R^3$: we therefore introduce the notation
\begin{equation}
H^k(\mathbb{S}^2):=H^k(D;\R^3)\cap \{g:D \rightarrow \R^3 \, \textrm{s.t.}\, |g(x)|=1\,\text{ a.e.}\, x\in D \}\, ,
\end{equation}
for $k\in\mathbb{N}_0$. 
Finally, we will denote by $L^p(W^{k,q}):=L^p([0,T];W^{k,q}(D ;\R ^n))$. We indicate with $C^k_0(D)$ the space of real valued functions with compact support on $D$, $k$-times continuously differentiable and such that every derivative is compactly supported on $D$. Let $(\Omega,\mathcal{F},\mathbb{P})$ be a probability space. We denote by $\mathcal{L}^p(\Omega;E)$ the usual Lebesgue space with respect to the probability measure $\mathbb{P}$.
\subsection{Construction of the rough driver.}
We briefly introduce the noise, as constructed  in \cite{LLG1D}. We are interested in interpreting the Stratonovich integral
\begin{align}\label{eq:integral}
	\int_{s}^{t} h u_r\times \circ \dd W_r\, ,
\end{align}
by means of rough path theory. We do not employ the classical definition of rough path, but we use the notion of rough driver (see \cite{bailleul2017unbounded}, \cite{deya2016priori,hocquet2017energy,hocquet2018ito,hofmanova2019navier,hocquet2018quasilinear}), that we introduce in Definition \ref{defi-RD}.
\begin{definition}[Rough driver]
	\label{defi-RD}
	Let $n\in\mathbb{N}$ and $p\in[2,3)$. A pair
	\begin{equation}\label{p-var-rp}
	\mathbf{B} :=(B,\mathbb{B} ) \in \mathcal V^p_2 \left ([0,T];L^2(D ;\mathscr L(\R^n))\right ) \times \mathcal V^{p/2}_2 \left ([0,T]; L^2(D ;\mathscr L(\R^n))\right ) 
	\end{equation}
	is said to be a $n$-dimensional \textit{rough driver} provided $\delta B_{s,u,t}=0$ and the relation is fulfilled as the sense of composition of linear maps in $\R^n$:
	\begin{equation}\label{chen-rela}
	\delta \mathbb{B} _{s,u,t}(x)=B_{u,t}(x)B_{s,u}(x)\,,
	\quad \text{for all }s<u<t\in [0,T],
	\quad \text{Lebesgue-a.e.\ }x\in D\,.
	\end{equation}
	We refer to \eqref{chen-rela} as Chen's relation. We introduce other properties of a rough driver.
	\begin{enumerate}[label=(\alph*)]
		\item
		$\mathbf{B}$ is \textit{geometric} if it can be obtained as the limit with respect to the $(p,p/2)$-variation topology, of a sequence of smooth rough drivers $\mathbf{B} ^\epsilon=(B^{\epsilon},\mathbb
		B ^{\epsilon}),$ where $\epsilon>0$, explicitly defined for $s\le t \in [0,T]$ as
		\begin{equation}\label{relazioni_path}
		B^{\epsilon}_{s,t} := \Gamma^{\epsilon}_{t}-\Gamma^\epsilon_s \ , 
		\qquad \mathbb{B} ^{\epsilon}_{s,t}:=\int_{s}^t\dd \Gamma^\epsilon_r (\Gamma^\epsilon_{r}-\Gamma^\epsilon_s)\, ,
		\end{equation}
		for some smooth path $\Gamma^\epsilon\colon[0,T] \to L^2(D;\mathscr L(\R^n))$.
		
		\item \label{itm:anti}
		$\mathbf{B} $ is \textit{anti-symmetric} if it is geometric and such that the approximating sequence $(\Gamma_t^\epsilon(x))$ in \eqref{relazioni_path} can be taken with values
		in the space $\mathscr L_a(\R^n)\subset\mathscr L(\R^n)$ (this implies in particular that the first component $B_{s,t}(x)$ takes values in \( \mathscr L_a(\R^n) \)). 
	\end{enumerate}
Given a rough driver $\mathbf{B}=(B,\mathbb{B})$, we call $B$ \textit{first iterated integral} and $\mathbb{B}$ \textit{second iterated integral}.
\end{definition}

\begin{remark}
An equivalent definition of geometric rough driver is that  $\textrm{Sym}(\mathbb{B}_{s,t})=(\mathbb{B}_{s,t}+\mathbb{B}_{s,t}^T)/2=B_{s,t}B_{s,t}/2$. This corresponds to the usual rough paths framework (\cite{FrizHairer}) in the following way: the main difference is that the first iterated integral of a rough path is normally a vector, thus the matrix product $B_{s,t}B_{s,t}$ is replaced with a cross product (see e.g. \cite{FrizHairer}). The geometricity in (a) property is thus directly inherited from the rough path setting and adapted to the fact that the first iterated integral of the rough driver is a matrix.

\end{remark}

We construct a rough driver so that the first iterated integral in \eqref{eq:integral} takes value on the sphere $\mathbb{S}^2$: this is done by using the structure of the cross product. Let $(\Omega,\mathcal{F},(\mathcal{F}_t)_t,\mathbb{P})$ be a filtered probability space. Consider a Brownian motion $\textit{w}\equiv(\textit{w}^1,\textit{w}^2,\textit{w}^3):\Omega\times [0,T]\rightarrow \mathbb{R}^3$ on $(\Omega,\mathcal{F},(\mathcal{F}_t)_t,\mathbb{P})$ and a map $\mathscr{F}:\mathbb{R}^3\rightarrow \mathscr{L}_a(\mathbb{R}^3)$ defined by $\mathscr{F}(\xi):=\cdot \times \xi$. Let $\Omega^{Str}\subset\Omega$ be the set of full measure such that  $w^{i,,j}_{s,t}(\omega)=\int_{s}^{t}\textit{w}^{i}_{s,r}\circ \dd \textit{w}^j_r(\omega)$ exists for all $\omega\in \Omega^{Str} $ and for all $i,j=1,2,3$, i.e. the set of $\omega\in \Omega$ such that the Stratonovich integral of $\textit{w}$ against itself exists.

For all fixed $\omega\in \Omega^{Str}$, we define the first iterated integral of the rough driver $\mathbf{W}(\omega)=(W(\omega),\mathbb{W}(\omega))$  as 
\begin{equation}
W_{s,t}(\omega):= \mathscr F(\delta\textit{w}_{s,t}(\omega)) 
=
\begin{pmatrix}
0 & \delta\textit{w}^{3}_{s,t}(\omega) & -\delta\textit{w}^{2}_{s,t}(\omega)\\
-\delta\textit{w}^{3}_{s,t} (\omega)& 0 & \delta\textit{w}^{1}_{s,t}(\omega)\\
\delta\textit{w}^{2}_{s,t} (\omega)& -\delta\textit{w}^{1}_{s,t}(\omega) & 0
\end{pmatrix}\, ,
\end{equation}
and the second iterated integral as
\begin{equation}
\mathbb{W}_{s,t}(\omega)
=
\begin{pmatrix}
-w^{3,3}_{s,t}(\omega) -w^{2,2}_{s,t} (\omega)&
w^{1,2}_{s,t}(\omega) &
w^{1,3}_{s,t}(\omega)\\
w^{2,1}_{s,t}(\omega) &
-w^{3,3}_{s,t}(\omega) -w^{1,1}_{s,t} (\omega)&
w^{2,3}_{s,t}(\omega)\\
w^{3,1}_{s,t} (\omega)&
w^{3,2}_{s,t} (\omega)&
-w^{2,2}_{s,t} (\omega)-w^{1,1}_{s,t}(\omega)\\
\end{pmatrix}\, .
\end{equation}
The couple $\textbf{W}\equiv(W,\mathbb{W})$ is a three dimensional random anti-symmetric geometric rough driver, i.e. for all $\omega \in \Omega^{str}$ the couple $\textbf{W}(\omega)$ is a three dimensional anti-symmetric geometric rough driver. A way to add spatial dependency to the previous example is to let, for some $h\in H^k(D;\R)$,
\begin{equation}
\label{simple_RD_h}
\mathbf{W}_h\equiv\left(h(x)W, h(x)^2 \mathbb{W}\right)\, .
\end{equation}
We denote by $\textit{RD} ^p_a(H^k)$, for $k\in\mathbb{N}$, the space of  $3$-dimensional anti-symmetric rough drivers such that for $h\in H^k(D;\mathbb{R})$
\begin{align*}
\mathbf
W_h \in \mathcal V^p_2 \left (0,T;H^k(D;\mathscr L_a(\R^3))\right ) \times \mathcal V^{p/2}_2 \left (0,T; H^k(D;\mathscr L(\R^3))\right ) \, ,
\end{align*}
whose coordinates belong to the $k$-th order Sobolev space $H^k.$
We introduce also the following controls for $s\leq t$ and $0\leq \gamma\leq k$
\begin{align*}
&\omega_{W,H^\gamma}(s,t):=\|W\|^p_{V^p_2 \left (s,t;H^\gamma(D;\mathscr L_a(\R^3))\right ) }, \quad \omega_{\mathbb
	W ,H^\gamma}(s,t):=\|\mathbb
W \|^{p/2}_{V^{p/2}_2 \left (s,t;H^\gamma(D;\mathscr L(\R^3))\right ) }\, ,\\
&\omega_{\mathbf{W} ,H^\gamma}(s,t):= \|W\|^p_{V^p_2 \left (s,t;H^\gamma(D;\mathscr L_a(\R^3))\right ) }+\|\mathbb
W\|^{p/2}_{V^{p/2}_2 \left (s,t;H^\gamma(D;\mathscr L(\R^3))\right ) }\, .
\end{align*}
We could consider rough drivers that are more general than the construction in \eqref{simple_RD_h}, but we restrict our investigation to rough drivers constructed as above by a three dimensional Brownian motion: indeed we rely on the martingale properties of the stochastic It\^o integral.
\subsection{Other constructions of the stochastic integral.}
We mention an other possible construction of the noise in \eqref{LLG_intro}. Let $h\in H^k(D;\mathbb{R}^3)$ and the Stratonovich lift to a rough path of a real valued Brownian motion $\mathbf{W}=(W,\mathbb{W})$. 
We can interpret the stochastic integral in the classical rough path setting as
\begin{align*}
\int_{s}^{t} u_r\times h\circ \dd W_r= W_{s,t}u_s\times h +\mathbb{W}_{s,t}u_s\times h+(u\times h)^{\natural}_{s,t}\, .
\end{align*}
The same integral can be interpreted in the Stratonovich sense. One can consider also linear combination of different Brownian motions, by using the formalism above. For what concerns the proof of existence of invariant measures, the choice of the noise has no consequences. On the other side, the choice of the noise affects the number of invariant measures. We return to this point in Section \ref{sec:uniq_stat_sol_inv_measure}. 

\section{The stochastic Landau-Lifschitz-Gilbert equation in one dimension }\label{sec:def_LLG}
We introduce the notion of solution of stochastic LLG that we employ in what follows.
	\begin{definition}
		\label{def:solution}
		Consider a Brownian motion $\textit{w}$ with values in $\mathbb{R}^3$ defined on a probability space $(\Omega,\mathcal{F},\mathbb{P})$. We consider the filtered probability space $(\Omega,\mathcal{F},(\mathcal{F}_t)_t,\mathbb{P})$ endowed with the complete natural filtration induced by $\textit{w}$.
		Let $\omega\mapsto \mathbf{W}_h(\omega)\equiv(hW(\omega),h^2\mathbb{W}(\omega))$ be a pathwise random rough driver belonging to $ \textit{RD} ^p_a(H^k)$ for some $ p\in [2,3) $, constructed from $\textit{w}$ as in \eqref{simple_RD_h}. Let $D\subset \mathbb{R}$ an open bounded interval.
		We say that a stochastic process $u\colon \Omega\times [0,T]\to L^2(D;\R^3)$  is a \textit{pathwise solution} of \eqref{LLG} if it fulfils
		\begin{enumerate}[label=(\roman*)]
			\item \label{sol_i}
			$u_t(\omega,x)\in\mathbb{S}^2$ for a.e.\ $(\omega,t,x)\in\Omega\times[0,T]\times D$;
			\item \label{sol_ii} 
			$u\in L^\infty (0,T;H^1)\cap L^2(0,T;H^2)$ $\mathbb{P}$-a.s. ; 
			\item \label{sol_iii}
			there exists $q<1 $ and a random variable $u^{\natural}\in \mathcal{L}^0(\Omega;\V^{q}_{2,\mathrm{loc}}(0,T;L^2))$ such that
			\begin{equation}
			\label{rLLG_def}
			\begin{aligned}
			\delta u_{s,t}&+\int_s^t\left[\lambda_2u_r\times (u_r\times\partial_x^2 u_r) -\lambda_1 u_r\times\partial_x^2 u_r\right]  \dd r\\
			&+\int_s^t\left[\lambda_2 u_r\times (u_r\times g'(u_r))-\lambda_1 u_r\times g'( u_r)\right]  \dd r
			&=hW_{s,t}u_s +h^2 \mathbb W_{s,t}u_s + u^\natural_{s,t}\,,
			\end{aligned}
			\end{equation}
			as an equality in $L^2(D ;\R^3)$, for every $s\leq t\in[0,T] $ and $\mathbb P$-a.s.
			\item (initial condition) $u_0=u^0$, where $u^0\in H^1(\mathbb{S}^2)$ $\mathbb P$-a.s. 
			\item (boundary conditions) $\partial_x u_t(x)=0$ for all $(t,x)\in \partial D\times [0,T]$ $\mathbb P$-a.s.
		\end{enumerate}
	\end{definition}
Observe that, if $u\in L^\infty (0,T;H^1)\cap L^2(0,T;H^2)$, then the equality
 $-u_t\times (u_t\times \partial_x^2u_t)=\partial_x^2 u_t+u_t|\partial_x u_t|^2$ in $H^{-1}$ for $u_t(x)\in \mathbb{S}^2$ (see \cite[Lemma 2.4]{brzezniak_LDP}) holds in $L^2$ for a.e. $t\in [0,T]$: indeed, since $u\in L^\infty(0,T;L^\infty)$ and $\partial_x^2 u\in L^2(0,T;L^2)$, then $u\times (u\times \partial_x^2u)\in L^2(0,T;L^2)$.
 Moreover, $u|\partial_x u|^2\in L^2(0,T;L^2)$ from the spherical bound and from the interpolation inequality in Lemma \ref{lemma:interp_ladyz}.
 
In \cite{LLG1D}, it is proved that there exists a unique solution to \eqref{LLG} in the sense of Definition \ref{def:solution}, provided $g\equiv 0$. Again under the assumption $g\equiv 0$, the solution is also continuous with respect to the noise and to the initial data in $H^1(\mathbb{S}^2)$ with respect to the norm $L^\infty (0,T;H^1)\cap L^2(0,T;H^2)\cap \mathcal{V}^p(W^{2,+\infty})$ (see Theorem 5.5 in \cite{LLG1D}). The continuity with respect to the initial data suggests that the semigroup associated to the solution $u$ has the Feller property and that we can try to employ the Krylov-Bogoliubov Theorem to show the existence of an invariant measure.
We remark that the solution in Definition \ref{def:solution} coincides with the solution of \eqref{LLG} where the noise is interpreted with respect to the classical Stratonovich calculus (as studied in \cite{brzezniak2019wong, brzezniak_LDP}): we can therefore pass from one formulation to the other. In \cite{LLG1D}, the solution to \eqref{LLG} is studied on the one-dimensional torus $\mathbb{T}$ and in absence of anisotropic energy. 

We are interested in dealing with non null anisotropic energy, i.e. $g\neq 0$.
The addition of anisotropic energy does not imply any significant change in all the results in \cite{LLG1D}: indeed the solution $u\in L^\infty(D;\mathbb{R}^3)$ and the anisotropy is trivially in $L^\infty (L^2)$.

The change of domain from the torus to a domain with boundary requires a slightly modified product formula, as we discuss in Section \ref{sec:strong_feller}. The proofs present minor changes, for instance when the divergence theorem is involved, but also in this case the null Neumann boundary conditions makes the change not noticeable. We refer to the proof of Lemma \ref{lemma:feller_property}, where we apply these changes to the continuity with respect to the initial datum in $H^1(\mathbb{S}^2)$. The changes regarding the proof of existence and the proof of uniqueness are similar as in the the proof of Lemma \ref{lemma:feller_property}.

\section{Existence of an invariant measure}\label{sec:inv_measure} 
We introduce some definitions and basic results regarding invariant measures associated to Markov processes. 
 Let $E$ be a Polish space endowed with the Borel $\sigma$-algebra $\mathcal{B}_E$.
 Let $B_b(E)$ be the space of bounded Borel measurable functions defined on $E$ with values in $\mathbb{R}$ and let $C_b(E)$ be the space of continuous bounded functions from $E$ with values in $\mathbb{R}$ .
Let $(P_t)_t$ be a Markov semigroup defined on $B_b(E)$. If $(P_t)_t$ maps $C_b(E)$ into $C_b(E)$, we say that $(P_t)_t$ has the \textit{Feller property}. Denote by $\mathcal{P}(E)$  be the space of probability measures on $(E,\mathcal{B}_{E})$.
The semigroup $(P_t)_t$ induces a dual semigroup $(P_t^*)_t$ on $\mathcal{P}(E)$, defined for all $\nu\in \mathcal{P}(E)$ and $\phi\in C_b(E)$ by
\begin{align*}
\int_{E}\phi d(P^*_t \nu)=\int_{E}P_t\phi d\nu \, .
\end{align*}
We say that a measure $\mu\in \mathcal{P}(E)$ is \textit{invariant} for the semigroup $(P_t)_t$ if $P^*_t\mu=\mu$ for all $t>0$.
The Krylov-Bogoliubov theorem furnishes a way to prove constructively the existence of an invariant measure (see e.g. \cite{Book_M}). 
\begin{theorem}\label{teo:Krylov_B}
	Let $E$ be a Polish space and let $(P_t)_t$ be a Markov semigroup with the Feller property on $C_b(E)$. Consider a random variable $u^0$ with values in $E$ with law $\mu$ and denote by $\mu^{u^0}_{t}:=P_t^*\mu$.
	
	Assume that there exists a divergent monotone increasing sequence of times $(t_n)_n$ so that the sequence of probability measures $(\mu_{t_n})_n\subset \mathcal{P}(E)$ defined for all $A\in \mathcal{B}_{E}$ by
	\begin{align*}
		\mu_{t_n}(A):=\frac{1}{t_n}\int_{0}^{t_n}\mu^{u^0}_s(A)  \dd s
	\end{align*}
	is tight. Then there exists at least one invariant measure for $(P_t)_t$. 
	
\end{theorem}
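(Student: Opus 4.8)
The plan is to follow the classical Krylov--Bogoliubov argument: extract a weak limit of the time-averaged measures $(\mu_{t_n})_n$, and then verify that this limit is invariant by exploiting both the averaging structure and the Feller property. The only genuinely delicate point will be the use of the Feller property, so I flag it in advance.

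First I would invoke Prokhorov's theorem. Since $E$ is Polish and the family $(\mu_{t_n})_n$ is tight by hypothesis, there exist a subsequence (which I do not relabel) and a probability measure $\mu_\infty\in\mathcal{P}(E)$ such that $\mu_{t_n}\rightharpoonup\mu_\infty$ weakly, i.e.\ $\int_E\psi\,\dd(\mu_{t_n})\to\int_E\psi\,\dd\mu_\infty$ for every $\psi\in C_b(E)$. The goal is then to show $P_t^*\mu_\infty=\mu_\infty$ for every $t>0$, which by the duality definition of $(P_t^*)_t$ is equivalent to
\[
\int_E P_t\phi\,\dd\mu_\infty=\int_E\phi\,\dd\mu_\infty\qquad\forall\,\phi\in C_b(E),\ \forall\,t>0\,.
\]

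The key computation is to evaluate both sides against $\mu_{t_n}$ and estimate the difference before passing to the limit. Using the duality relation together with the semigroup property, for fixed $\phi$ and $s$ one has $\int_E P_t\phi\,\dd\mu_s^{u^0}=\int_E P_s(P_t\phi)\,\dd\mu=\int_E P_{s+t}\phi\,\dd\mu=\int_E\phi\,\dd\mu_{s+t}^{u^0}$. Substituting this into the definition of $\mu_{t_n}$ and changing variables in the inner integral, I obtain
\[
\int_E P_t\phi\,\dd(\mu_{t_n})-\int_E\phi\,\dd(\mu_{t_n})=\frac{1}{t_n}\left[\int_{t_n}^{t_n+t}-\int_0^t\right]\left(\int_E\phi\,\dd\mu_s^{u^0}\right)\dd s\,.
\]
Since each $\mu_s^{u^0}$ is a probability measure and $\phi$ is bounded, the inner integral is bounded in modulus by $\|\phi\|_\infty$, so each of the two remaining time integrals is controlled by $t\|\phi\|_\infty$; hence the whole expression is bounded by $2t\|\phi\|_\infty/t_n$, which tends to $0$ because $t_n\to+\infty$.

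Finally I would pass to the limit, and this is where the Feller property is essential. Since $\phi\in C_b(E)$, the Feller property guarantees $P_t\phi\in C_b(E)$, so $P_t\phi$ is a legitimate test function against which weak convergence may be applied; consequently both $\int_E P_t\phi\,\dd(\mu_{t_n})$ and $\int_E\phi\,\dd(\mu_{t_n})$ converge along the subsequence to $\int_E P_t\phi\,\dd\mu_\infty$ and $\int_E\phi\,\dd\mu_\infty$ respectively. Combining this with the vanishing bound from the previous step yields $\int_E P_t\phi\,\dd\mu_\infty=\int_E\phi\,\dd\mu_\infty$ for all $\phi\in C_b(E)$ and all $t>0$, i.e.\ $\mu_\infty$ is invariant. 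The averaging estimate itself is elementary; the main obstacle is precisely that, without the continuity of $P_t\phi$ provided by the Feller property, the weak limit $\mu_\infty$ could not be tested against $P_t\phi$ and the argument would break down. This is exactly why establishing the Feller property (Theorem~\ref{teo:Feller}) is the substantive analytic input of the paper.
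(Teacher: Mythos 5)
Your argument is correct and complete: Prokhorov's theorem extracts the weak limit $\mu_\infty$, the averaging identity reduces the invariance defect to two boundary time-integrals of size $O(t\|\phi\|_\infty/t_n)$, and the Feller property is used exactly where it must be, namely to make $P_t\phi$ an admissible test function for the weak convergence. The paper itself does not prove this theorem — it states it as a classical result with a citation to the literature — and your proof is precisely the standard Krylov--Bogoliubov argument that the cited source contains, so there is nothing to compare beyond noting that you have correctly identified the Feller property as the hypothesis the rest of the paper works to verify.
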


Let $u^0$ be a random variable with values in $ H^1(\mathbb{S}^2)$ and independent of the Brownian motion $W$. Consider $u=\pi(u^0,W)$ to be the unique solution to \eqref{LLG}, then the stochastic process $(u_t^{u^0})_{t}$ is Markov process with respect to the filtration generated by $W$ and $u^0$.
In our context, we are interested in studying the existence of an invariant measure on $H^1(\mathbb{S}^2)$ for the Markov semigroup of linear operators $(P_t)_t$ defined for all $\phi\in B_b(H^1(\mathbb{S}^2))$ by
\begin{align}\label{eq:transition_semigroup}
P_t\phi (x):=\mathbb{E}\left[\phi(u^x_t)\right]\, ,
\end{align}
where $u^x_t$ is the solution to \eqref{LLG} at time $t$ with initial condition $x\in H^1(\mathbb{S}^2)$. As stated in the Krylov-Bogoliubov theorem, we need to show that the semigroup $(P_t)_t$ has the Feller property and the tightness property of $(\mu_{t_n})_{n}$ in $H^1(\mathbb{S}^2)$, where we can rewrite $\mu_{t_n}$ as
\begin{align}\label{eq:mu_T}
\mu_{t_n}(\,\cdot\,)=\frac{1}{t_n}\int_{0}^{t_n}\mathbb{P}(u^{x}_s\in\cdot\,)  \dd s\, .
\end{align}
In Section \ref{sec:strong_feller}, we prove that $(P_t)_t$ has the Feller property by means of a rough path approach. The inequality which leads to the Feller property of the semigroup is already contained in \cite{LLG1D} for the solution to \eqref{LLG} defined on the one dimensional torus and in absence of anisotropic energy. We recall the proof for the reader's convenience by adding the anisotropic energy (which does not constitute a mathematical problem) and the boundary condition (which requires a slightly modified product formula, in Proposition \ref{pro:product}). In Section \ref{sec:tightness}, we prove that the sequence $(\mu_{t_n})_n$ is tight in $H^1(\mathbb{S}^2)$: this fact follows from some geometric properties of the equation, a proper a posteriori estimate of solution to the equation and the Markov property.

\begin{remark} 
	Note that the semigroup $(P_t)_t$ exists. Indeed the unique solution $u$ on $[0,T]$ is adapted to $(\mathcal{F}_t)_t$ (it is a continuous function of the rough path, as a consequence of the Wong-Zakai result in \cite{LLG1D}). The solution is also continuous in time with values in $L^2$. Since $\phi\in B_b(H^1(\mathbb{S}^2))$ is $\mathcal{F}$-measurable, also the composition $\phi(u)$ is $\mathcal{F}$-measurable. We also know that $\phi$ is bounded, therefore $P_t\phi(x)$ is well defined for every time $t\in [0,T]$.

For every initial condition $x\in H^1(\mathbb{S}^2)$ there exists a unique solution $u=\pi(u^0,W)$ on every fixed time interval $[0,T]$. As a consequence of uniqueness, for  every $s\leq t\in [0,T]$, it holds that for all $\phi\in C_b(H^1(\mathbb{S}^2))$
\begin{align*}
\mathbb{E}[u^x_{s+t}|\mathcal{F}_s]=\mathbb{E}[\phi(u^x_{s+t})]=P_t\phi(u_s^x)\, ,\quad\quad \mathbb{P}-\textrm{a.s.}\, ,
\end{align*}
which shows that $(P_t)_t$ is a Markov semigroup on $H^1(\mathbb{S}^2)$ with respect to $(\mathcal{F}_t)_t$. We do not specify the time interval, which always coincides with the interval of existence of the solution to \eqref{LLG}.
\end{remark}

\subsection{Feller property in the $H^1(\mathbb{S}^2)$-norm via rough paths.} \label{sec:strong_feller}
In order achieve the Feller property of the semigroup $(P_t)_t$, we need a small extension of the continuity with respect to the initial datum contained in \cite[Theorem 5.5, equation (5.13)]{LLG1D}, which exploits the rough path formulation of the equation: for fixed $T>0$ we know that the pathwise solution depends continuously on the initial datum, namely given two initial conditions $u^0,v^0\in H^1(\mathbb{S}^2)$ and the corresponding solutions $u=\pi(u^0,\mathbf{W})$ and $v=\pi(v^0,\mathbf{W})$ to \eqref{LLG}, the following inequality holds 
	 \begin{align}\label{eq:cont_init_data_1}
	 \sup_{t\in[0,T]}\|u_t-v_t\|_{H^1}\leq C \exp\left([\|u^0\|^2_{H^1}+\|v_0\|^2_{H^1}]t+\omega_{\mathbf{W}}^{1/p}\right) \|u^0-v^0\|_{H^1}\, ,
	 \end{align}
	 $\mathbb{P}-$a.s. and where the constant $C>0$ is independent on time and on the initial condition. We note that the constant depends exponentially on time and exponentially on the initial conditions. More specifically, if we consider two initial conditions $u^0,v^0\in B_R(0)\subset H^1(\mathbb{S}^2)$ the ball in $H^1(\mathbb{S}^2)$ of radius $R>0$ centred in $0$, then the constant in \eqref{eq:cont_init_data_1} depends on the initial condition only as a positive power of $R$.
	 
	  The pathwise convergence in \eqref{eq:cont_init_data_1} implies the Feller property, as stated in Theorem \ref{teo:Feller}.
	  \begin{theorem}\label{teo:Feller}
	  	The semigroup $(P_t)_t$ has the Feller property, i.e. $P_t:C_b(H^1(\mathbb{S}^2))\rightarrow C_b(H^1(\mathbb{S}^2))$ for all $t>0$.
	  \end{theorem}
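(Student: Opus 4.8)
The plan is to deduce the Feller property directly from the pathwise continuity estimate \eqref{eq:cont_init_data_1}, which is the key analytic input already available. Recall that to establish the Feller property it suffices to show that for each fixed $t>0$ and each $\phi\in C_b(H^1(\mathbb{S}^2))$, the map $x\mapsto P_t\phi(x)=\mathbb{E}[\phi(u^x_t)]$ is continuous and bounded on $H^1(\mathbb{S}^2)$. Boundedness is immediate, since $|P_t\phi(x)|\leq \|\phi\|_{\infty}$ uniformly in $x$. The entire content is therefore the continuity of $x\mapsto P_t\phi(x)$.

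First I would fix $x\in H^1(\mathbb{S}^2)$ and a sequence $x_n\to x$ in $H^1(\mathbb{S}^2)$, and denote by $u^x=\pi(x,\mathbf{W})$ and $u^{x_n}=\pi(x_n,\mathbf{W})$ the corresponding pathwise solutions driven by the \emph{same} realization of the rough driver $\mathbf{W}(\omega)$. Applying \eqref{eq:cont_init_data_1} at the fixed terminal time $t$ gives, $\mathbb{P}$-a.s.,
\begin{align*}
\|u^{x_n}_t-u^x_t\|_{H^1}\leq C\exp\left([\|x_n\|^2_{H^1}+\|x\|^2_{H^1}]t+\omega_{\mathbf{W}}^{1/p}\right)\|x_n-x\|_{H^1}\, .
\end{align*}
Since $x_n\to x$ in $H^1$, the norms $\|x_n\|_{H^1}$ are bounded, say by some $R>0$; hence, using that the constant depends on the initial data only through a positive power of $R$, the prefactor is bounded by $C\exp(2R^2 t+\omega_{\mathbf{W}}^{1/p})$, and the right-hand side tends to $0$ $\mathbb{P}$-a.s. as $n\to\infty$. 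This shows $u^{x_n}_t\to u^x_t$ in $H^1(\mathbb{S}^2)$ almost surely along the fixed driver. Because $\phi$ is continuous on $H^1(\mathbb{S}^2)$, it follows that $\phi(u^{x_n}_t)\to\phi(u^x_t)$ $\mathbb{P}$-a.s.

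The final step is to pass the almost sure convergence through the expectation. Here I would invoke the dominated convergence theorem: the random variables $\phi(u^{x_n}_t)$ are uniformly bounded by $\|\phi\|_{\infty}$, which is integrable on the probability space, so
\begin{align*}
P_t\phi(x_n)=\mathbb{E}[\phi(u^{x_n}_t)]\longrightarrow \mathbb{E}[\phi(u^x_t)]=P_t\phi(x)\, .
\end{align*}
This establishes continuity of $P_t\phi$ at the arbitrary point $x$, and since $x$ was arbitrary, $P_t\phi\in C_b(H^1(\mathbb{S}^2))$, which is precisely the Feller property. The main structural obstacle is not in this deduction, which is essentially routine once the pathwise bound is in hand, but rather in the estimate \eqref{eq:cont_init_data_1} itself; as the authors emphasize in the introduction, the quartic dependence on the initial data in the drift would prevent closing a Gronwall argument after taking expectations in the Stratonovich framework, and it is exactly the pathwise rough-path formulation that furnishes \eqref{eq:cont_init_data_1} with an almost sure exponential constant, allowing the clean dominated-convergence argument above to go through without any integrability concerns tied to the noise.
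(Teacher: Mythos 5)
Your proposal is correct and follows essentially the same route as the paper: both deduce the Feller property from the pathwise estimate \eqref{eq:cont_init_data_1}, using that a convergent sequence of initial data lies in a ball $B_R(0)$ so the exponential prefactor is almost surely finite, and then pass to the limit inside the expectation via dominated convergence with the uniform bound $\|\phi\|_\infty$. Your closing remark correctly identifies that the real work lies in establishing \eqref{eq:cont_init_data_1} itself (the paper's Lemma \ref{lemma:feller_property}), not in this deduction.
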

	  \begin{proof}
	  	Assume indeed that $(x^n)_n$ converges to $x$ in $H^1(\mathbb{S}^2)$ (note that for each converging sequence $(x^n)_n$, there exists a radius $R>0$ so that $(x^n)_n\subset B_R(0)$). For every $\phi\in C_b(H^1(\mathbb{S}^2))$, it follows from the dominated convergence theorem and the continuity in \eqref{eq:cont_init_data_1} that
	  	\begin{align}\label{eq:continuity_phi}
	  	\lim_{n\rightarrow +\infty} (P_t\phi)(x^n)=\lim_{n\rightarrow +\infty} \mathbb{E}\left[\phi(u_t^{x^n})\right]=\mathbb{E}\left[\lim_{n\rightarrow +\infty}\phi(u_t^{x^n})\right]=\mathbb{E}\left[\phi(u_t^{x})\right]=(P_t\phi)(x)\, ,
	  	\end{align}
	  	which means that $P_t\phi:H^1(\mathbb{S}^2)\rightarrow H^1(\mathbb{S}^2)$ is continuous for every fixed $t>0$ and $\phi\in C_b(H^1(\mathbb{S}^2))$.  Thus $P_t\phi\in C_b(H^1(\mathbb{S}^2))$ (where the boundedness follows from the boundedness of $\phi$).
	  \end{proof}
	 For the reader's convenience, we briefly proof of \eqref{eq:cont_init_data_1} in the following Lemma \ref{lemma:feller_property}: we mainly highlight the parts of the proof where the anisotropic term and the boundary conditions play a role. For a more detailed proof see \cite[Theorem 5.5]{LLG1D}
	 \begin{lemma}\label{lemma:feller_property}
	 	The solution map $u=\Phi(u^0)$ to \eqref{LLG} is locally Lipschitz continuous as a function of $u^0\in H^1(\mathbb{S}^2)$ for every fixed $T>0$. Namely, fix $R>0$ and let $u^0,v^0\in B_R(0):=\{x\in H^1(\mathbb{S}^2):\|x\|_{H^1(\mathbb{S}^2)}< R\}$. Denote the respective solutions to \eqref{LLG} by $u=\Phi(u^0)$ and $v=\Phi(v^0)$, then
	 	\begin{align*}
	 		\sup_{t\in[0,T]}\|u_t-v_t\|^2_{H^1}\lesssim_{g,|D|}\exp\left(R^kT^k+R^k\omega_{W}^{1/p} (0,T)\right) \|u^0-v^0\|^2_{H^1}\, ,
	 	\end{align*}
	 	for all $u^0,v^0\in B_R(0)$ and for some power $k\geq 4$.
	 \end{lemma}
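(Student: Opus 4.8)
The plan is to pass to the difference $w:=u-v$, derive the rough evolution equation it satisfies, and then run a rough energy estimate on $\|w_t\|_{H^1}^2$ that I close with the rough Gronwall lemma. First I would subtract the two copies of \eqref{rLLG_def}: by linearity of the rough driver the rough part collapses to $hW_{s,t}w_s+h^2\mathbb{W}_{s,t}w_s+w^\natural_{s,t}$ with $w^\natural:=u^\natural-v^\natural$, while the drift becomes the difference $F(u)-F(v)$ of the nonlinear terms $F(\phi)=\lambda_2\phi\times(\phi\times\partial_x^2\phi)-\lambda_1\phi\times\partial_x^2\phi+\lambda_2\phi\times(\phi\times g'(\phi))-\lambda_1\phi\times g'(\phi)$. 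Using the spherical identity $-\phi\times(\phi\times\partial_x^2\phi)=\partial_x^2\phi+\phi|\partial_x\phi|^2$ recalled after Definition \ref{def:solution}, the leading dissipative term turns into $-\lambda_2\partial_x^2 w$ plus the reaction difference $-\lambda_2(u|\partial_x u|^2-v|\partial_x v|^2)$; since $g'(u)-g'(v)=Aw$, the anisotropic contribution is Lipschitz in $w$ at the $L^2$/$L^\infty$ level with a constant depending only on $g$ and $|D|$.

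Next I would apply the product formula for rough drivers with Neumann boundary conditions (Proposition \ref{pro:product}) to the functionals $\|w_t\|_{L^2}^2$ and $\|\partial_x w_t\|_{L^2}^2$. The crucial cancellation is that, since $W$ is antisymmetric, $\langle hW_{s,t}w_s,w_s\rangle_{L^2}=0$ pointwise, and likewise the top-order pairing $\langle hW_{s,t}\partial_x w_s,\partial_x w_s\rangle_{L^2}=0$; the only surviving rough contribution at the $H^1$ level is the commutator $\langle(\partial_x h)W_{s,t}w_s,\partial_x w_s\rangle_{L^2}$, which is controlled by $\|\partial_x h\|$ and a product of $\|w\|_{L^2}$ and $\|\partial_x w\|_{L^2}$. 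The second iterated integral $\mathbb{W}$ and the remainder $w^\natural$ are absorbed by the a priori machinery of \cite{LLG1D}, using geometricity $\mathrm{Sym}\,\mathbb{W}_{s,t}=\tfrac12 W_{s,t}W_{s,t}$ to match the square of the first-order term. The dissipative term $-\lambda_2\partial_x^2 w$ supplies the coercive quantity $\lambda_2\|\partial_x^2 w\|_{L^2}^2$ after the Neumann integration by parts, whose boundary terms vanish precisely because $\partial_x u=\partial_x v=0$ on $\partial D$; this is where the change of domain from the torus is accommodated.

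The main obstacle is the reaction difference $u|\partial_x u|^2-v|\partial_x v|^2$, which carries two derivatives of the solutions and so cannot be bounded by $\|w\|_{H^1}$ alone. I would split it as $w|\partial_x u|^2+v(\partial_x u+\partial_x v)\cdot\partial_x w$ and estimate each piece by the Ladyzhenskaya-type interpolation of Lemma \ref{lemma:interp_ladyz}, distributing the derivatives so that the genuinely second-order factors $\|\partial_x^2 u\|_{L^2},\|\partial_x^2 v\|_{L^2}$ appear only to a power that is either absorbed into the coercive term $\lambda_2\|\partial_x^2 w\|_{L^2}^2$ by Young's inequality or paired with a time-integrable quantity. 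This is exactly where the power $k\ge4$ enters: after repeated interpolation the $H^1$ norms of $u,v$ (bounded by $R$ on $B_R(0)$) appear to degree at least four, while the $L^2(H^2)$ regularity from item \ref{sol_ii} of Definition \ref{def:solution} keeps the time integral of the second-order norms finite.

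Finally, collecting the estimates produces a rough Gronwall inequality for $\|w_t\|_{H^1}^2$ whose drift coefficient has the form $C(1+\|u\|_{H^1}^k+\|v\|_{H^1}^k)$ and whose rough coefficient is controlled by $\omega_{W}$. Applying the rough Gronwall lemma and bounding the $H^1$ norms of $u,v$ by $R$ on $B_R(0)$ yields the stated bound with the factor $\exp(R^kT^k+R^k\omega_W^{1/p}(0,T))$, the dependence on $g$ and $|D|$ coming through the anisotropic reaction difference and the interpolation and Poincaré constants.
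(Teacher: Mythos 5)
Your overall strategy coincides with the paper's: pass to the difference $z=u-v$, exploit the antisymmetry of $W$ and the decomposition $\mathbb{W}=\tfrac12 WW+\mathbb{L}$ to kill the dangerous rough contributions after pairing with $\mathbf 1$, keep the dissipation $\|\partial_x^2 z\|_{L^2}^2$ from the Neumann integration by parts, interpolate the cubic term $u|\partial_x u|^2-v|\partial_x v|^2$ so that the second-order factors are absorbed by Young's inequality (this is indeed where $k\ge 4$ comes from), and close with the rough Gronwall lemma. The paper also first closes the $L^2$-contraction estimate \eqref{eq:uniqueness_estimate} separately and then feeds it into the $H^1$ estimate; your two-functional setup is compatible with that.

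There is, however, one genuine gap: your treatment of the remainder. You propose to apply the product formula only to the two scalar functionals $\|z_t\|_{L^2}^2$ and $\|\partial_x z_t\|_{L^2}^2$ and assert that $\mathbb{W}$ and $z^\natural$ are ``absorbed by the a priori machinery of \cite{LLG1D}.'' This does not work as stated. The sewing-lemma estimate of $(\partial_x z\otimes\partial_x z)^{\natural}$ requires applying $\delta(\cdot)_{s,u,t}$ to the full tensor equation \eqref{eq:11}, and the resulting expression involves increments of the lower-level paths $z\otimes\partial_x z$, $\partial_x z\otimes z$, $z\otimes z$ together with their remainders and the $p$-variation of the drifts $\mathcal D(0,1)$, $\mathcal D(1,0)$, $\mathcal D(0,0)$ measured in $H^{-1}$ and $L^2$. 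In particular the off-diagonal components $(\partial_x z^i\,\partial_x z^j)^{\natural}$ with $i\neq j$ enter unavoidably, which is why the paper sets up and estimates the whole system \eqref{eq:00}--\eqref{eq:11} before testing by $\mathbf 1$; this is exactly the ``price to pay'' for the pathwise argument and it is the step that distinguishes the rough-path proof from a formal Stratonovich energy computation. Without setting up the mixed equations and running the sewing lemma on each component (obtaining the bound $\|(\partial_x z)^{\natural,2}_{s,t}\|_{H^{-1}}\lesssim\omega_{\mathbf W}^{1/p}(s,t)[\|z\|^2_{L^\infty(H^1)}+\cdots]$), the Gronwall inequality cannot be closed, so this part of your argument needs to be supplied rather than cited.
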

 \begin{proof}
 	In this proof, the parameters $\lambda_1,\lambda_2$ do not play any role, therefore we simplify the computations by setting $\lambda_1=\lambda_2=1$.
 	As a consequence of the uniqueness proof (which we modify by employing Proposition \ref{pro:product}), we know that $u$ depends continuously on the initial data in $L^2$. More specifically, given two initial conditions $u^0,v^0\in H^1(\mathbb{S}^2)$, it follows from \cite[Theorem 4.1, equation (4.8)]{LLG1D} that
 	\begin{align}\label{eq:uniqueness_estimate}
 		\sup_{t\in[0,T]}\|u_t-v_t\|^2_{L^2}\lesssim_{g,|D|} \exp(T\omega_{\mathbf{W}}^{1/p}(0,T)+T\|u^0\|_{H^1(\mathbb{S}^2)}^4+\|v^0\|_{H^1(\mathbb{S}^2)}^4) \|u^0-v^0\|_{L^2}^2\, ,
 	\end{align}
 	which is actually local Lipschitz continuity of the solution with respect to the initial datum in $L^2$ (remark that the initial datum needs to belong to $H^1(\mathbb{S}^2)$, in order to achieve this estimate). We notice that the Lipschitz constant depends on the time exponentially: this implies in particular that we can not use this kind of estimate to study the long time behaviour of the equation. 
 	
 	Since the initial datum lies in $H^1(\mathbb{S}^2)$, we need to prove also continuity with respect to the initial datum of the gradient of the solution. Set $z:=u-v$. In  order to achieve the conclusion, we need to study the system of equations
 	\begin{align}
 	&\delta (z^{\otimes 2})_{s,t}
 	= 2\mathcal{D}_{s,t}(0,0)
 	+ (I+ \mathbb{I}+ \tilde{\mathbb{I}})(0,0)_{s,t}
 	+(z^{\otimes 2})^\natural_{s,t}\label{eq:00}
 	\\
 	&\delta (z\otimes \partial_xz)_{s,t}
 	=\mathcal{D}_{s,t}(1,0)+\mathcal{D}_{s,t}(0,1)
 	+ (I+ \mathbb{I}+ \tilde{\mathbb{I}})(0,1)_{s,t}
 	+(z\otimes \partial _x z)^\natural_{s,t}\label{eq:01}
 	\\
 	&\delta (\partial_xz\otimes z)_{s,t}
 	=\mathcal{D}_{s,t}(1,0)+\mathcal{D}_{s,t}(0,1)
 	+ (I+ \mathbb{I}+ \tilde{\mathbb{I}})(1,0)_{s,t}
 	+(\partial_xz\otimes z)^\natural_{s,t}\label{eq:10}
 	\\
 	&\delta (\partial_xz\otimes \partial_xz )_{s,t}
 	=2\mathcal{D}_{s,t}(1,1)
 	+ (I+ \mathbb{I}+ \tilde{\mathbb{I}})(1,1)_{s,t}
 	+(\partial_xz\otimes \partial_xz)^\natural_{s,t}\, ,\label{eq:11}
 	\end{align}
 	where we used the product formula in Proposition \ref{pro:product} and the following notations
 	\begin{align*}
 	&\mathcal{D}(l,m):=\int_{s}^{t}\partial^l_x [(u\times \partial_x^2 u-\partial_x^2 u+u|\partial_x u|^2)-(v\times \partial_x^2 v-\partial_x^2 v+v|\partial_x v|^2)]\otimes\partial^m_x z\dd r\\
 	&\quad\quad\quad\quad\quad+\int_{s}^{t}\partial^l_x [u\times g'(u)-v\times g'(v)]\otimes\partial^m_x z\dd r\\
 	&\quad\quad\quad\quad\quad-\int_{s}^{t}\partial^l_x [u\times (u\times g'(u))-v\times(v\times g'(v))]\otimes\partial^m_x z\dd r\, , \\
 	&I_{s,t}(l,m):=\partial^l_x(W_{s,t}z_s)\otimes \partial^m_x z_s +\partial^l_x z_s\otimes\partial^m_x (W_{s,t}z_s)\, ,\\
 	&\mathbb{I}_{s,t}(l,m):=\partial^l_x(\mathbb{W}_{s,t}z_s)\otimes \partial^m_x z_s +\partial^l_x z_s\otimes\partial^m_x (\mathbb{W}_{s,t}z_s)\, ,\\
 	&\tilde{\mathbb{I}}_{s,t}(l,m):=\partial^l_x (W_{s,t}z_s)\otimes\partial^m_x (W_{s,t}z_s)\, ,
 	\end{align*}
 	for $m,l \in \{ 0 , 1 \}$. 
 	We notice that if we test the system above by $\mathbf{1}=(1_{l=m})$, it is possible to simplify the equations. 
 	
 	We need to consider the above system of equation since, while estimating the remainder term $(\partial_x z \otimes \partial_x z)^\natural$, the remainders $(\partial_x z \otimes z)^\natural, (z \otimes \partial_x z)^\natural, (z \otimes  z)^\natural$ appear. As a difference from the classical It\^o-Stratonovich calculus, again in the estimation of the remainder, the mixed terms $(\partial_x z^i  \partial_x z^j)^\natural$ appear, for $i\neq j$: this more complicated setting is the price to pay to get the pathwise convergence. 
 	
 	We recall the structure of the noise $(W,\mathbb{W})$. In our context the first iterated integral $W$ is an anti-symmetric matrix, namely $W^T=-W$. As a consequence $W_{s,t}a\cdot a=0$ for all $a\in \mathbb{R}^3$. The second iterated integral can be decomposed as
 	\begin{align*}
 		\mathbb{W}_{s,t}=\frac{1}{2}W_{s,t} W_{s,t}+\mathbb{L}_{s,t}\, ,
 	\end{align*}
 	where $\mathbb{L}$ is an anti-symmetric matrix.
 	Thus for all $a\in \mathbb{R}^3$ it follows that
 	\begin{align*}
 		&W_{s,t}a\cdot a+a\cdot W_{s,t}a=0\, ,\\
 		&\mathbb{W}_{s,t}a\cdot a+a\cdot \mathbb{W}_{s,t}a+ W_{s,t}a\cdot W_{s,t}a=-W_{s,t}a\cdot W_{s,t}a+W_{s,t}a\cdot W_{s,t}a=0\, .
 	\end{align*}
 	These symmetries lead to the conclusion that, if we test the system by $\mathbf{1}$, we can reduce the noises of the equations as follows
 	\begin{align*}
 		\langle(I+ \mathbb{I}+ \tilde{\mathbb{I}})_{s,t}(0,0),\mathbf{1}\rangle&=0\, , \quad\langle(I+ \mathbb{I}+ \tilde{\mathbb{I}})_{s,t}(1,0),\mathbf{1}\rangle=0\, , \quad \langle(I+ \mathbb{I}+ \tilde{\mathbb{I}})_{s,t}(0,1),\mathbf{1}\rangle=0 \, ,\\
 		\langle I_{s,t}(1,1),\mathbf{1}\rangle &=\langle\partial_xW_{s,t}z_s\otimes \partial_x z_s +\partial_x z_s\otimes \partial_xW_{s,t}z_s,\mathbf{1}\rangle\, ,\\
 		\langle\mathbb{I}_{s,t}(1,1)+\tilde{\mathbb{I}}_{s,t}(1,1),\mathbf{1}\rangle&=\langle\partial_x\mathbb{W}_{s,t}z_s\otimes \partial_x z_s +\partial_x z_s\otimes\partial_x \mathbb{W}_{s,t}z_s
 		+\partial_x W_{s,t}z_s\otimes\partial_x W_{s,t}z_s,\mathbf{1}\rangle\\
 		&\quad+ \langle W_{s,t}\partial_x z_s\otimes\partial_x W_{s,t}z_s+\partial_x W_{s,t}z_s\otimes W_{s,t}\partial_xz_s,\mathbf{1}\rangle\, .
 	\end{align*}
 	In particular, one has that in \eqref{eq:00} the equation is deterministic and also the remainder term cancels (one already observes this fact in the uniqueness proof). In \eqref{eq:01} and \eqref{eq:10}, the noise is also vanishing as well as the drifts (from integrations by parts): thus also the mixed equations for $l=m$ are deterministic (which is consistent with the fact that the solution lies on the sphere and thus its derivative lies in the tangent plane). Finally, the noises in \eqref{eq:11} simplify and the equation tested by $\mathbf{1}$ in $L^2$, has the form
 	\begin{align}\label{eq:energy_feller}
 		\delta \langle \partial_x  z\otimes \partial_x  z,\mathbf{1}\rangle _{s,t}=\langle (\mathcal
 		D +I+\mathbb{I}+\tilde{\mathbb{I}})_{s,t}(1,1),\mathbf{1}\rangle +\langle (\partial_x  z)_{s,t}^{\natural,2},\mathbf{1}\rangle\,  .
 	\end{align}
 	By estimating equation \eqref{eq:energy_feller}, we achieve the conclusion that the gradient of the equation is continuous with respect to the initial data in $H^1(\mathbb{S}^2)$: the main parts to bound are the drift $\langle \mathcal
 	D,\mathbf{1}\rangle $, the noise terms $\langle ( I+\mathbb{I}+\tilde{\mathbb{I}})_{s,t}(1,1),\mathbf{1}\rangle $ and the remainder $\langle (\partial_x  z)_{s,t}^{\natural,2},\mathbf{1}\rangle$.
 	We briefly recall the estimate of the drift: we remember from the proof of existence of the solution that $u\in L^\infty(H^1)\cap L^2(H^2)\cap C(L^2)$ pathwise. We exploit this boundedness, which holds for every fixed time interval $[0,T]$, for $T>0$ (note that the bounds on the solution depend exponentially on time, on the initial condition $u^0\in H^1(\mathbb{S}^2)$ and on the dimension of the domain). We write the drift tested by $\textbf{1}$, 
 	\begin{align*}
 	\langle	\mathcal{D}_{s,t}(1,1),\mathbf{1}\rangle &= \int_{s}^{t}\int_{D} \partial_x[\partial_x^2 z_r+z_r|\partial_x u_r|^2+v_r(|\partial_x u_r|^2-|\partial_x v_r|^2)+z_r\times \partial^2_x u_r+v_r\times \partial^2_x z_r]\cdot \partial_x z_r \dd x \dd r\\
 	&\quad +  \int_{s}^{t}\int_{D}\partial_x[z_r\times g'(u_r)+v_r\times(g'(u_r-v_r))]\cdot \partial_x z_r\dd x\dd r\\
 	&\quad +  \int_{s}^{t}\int_{D}\partial_x[u_r\times (u_r\times g'(u_r))-v_r\times(v_r\times g'(v_r))]\cdot \partial_x z_r\dd x\dd r\, ,
 	\end{align*}
 	which from integration by parts and by using the null Neumann boundary conditions, lead to
 	\begin{align*}
 	\langle	\mathcal{D}_{s,t}(1,1),\mathbf{1}\rangle&=-\int_{s}^{t}\|\partial_x^2 z_r\|_{L^2}^2 \dd r +  \int_{s}^{t}\int_{D}\partial_x[z_r\times g'(u_r)+v_r\times g'(u_r-v_r)]\cdot \partial_x z_r\dd x\dd r\\
 	&\quad+\int_{s}^{t}\int_{D}\partial_x[u_r\times (u_r\times g'(u_r))-v_r\times(v_r\times g'(v_r))]\cdot \partial_x z_r\dd x\dd r\\
 	&\quad-\int_{s}^{t}\int_{D}[z_r|\partial_x u_r|^2\cdot \partial_x^2 z_r+v_r(|\partial_x u_r|^2-|\partial_x v_r|^2)\cdot \partial_x^2 z_r+z_r\times \partial^2_x u_r\cdot \partial^2_x z_r] \dd x \dd r\, .
 	\end{align*}
 	The estimate of the different terms of the drift follows from the one dimensional Agmon's inequality $\|z\|_{L^\infty}\lesssim_{|D|} \|z\|_{L^2}^{1/2}\|z\|_{H^1}^{1/2}$, 
 	\begin{align*}
 		&\int_{s}^{t}\int_{D}[z|\partial_x u|^2\cdot \partial_x^2 z+v(|\partial_x u|^2-|\partial_x v|^2)\cdot \partial_x^2 z+z\times \partial^2_x u\cdot \partial^2_x z] \dd x \dd r\\
 		&\quad \leq \frac{3}{4}\int_{s}^{t}\|\partial_x^2 z\|^2_{L^2} \dd r+\|z\|_{L^\infty([s,t],L^2)}\|z\|_{L^\infty([s,t],H^1)}\int_{s}^{t}\|\partial_x u\|_{L^4}^4 \dd r\\
 		&\quad \quad+\|\partial_x z\|^2_{L^\infty([s,t],L^2)}\int_{s}^{t}[\|\partial_x u\|_{L^2}\|\partial_x u\|_{H^1}+\|\partial_x v\|_{L^2}\|\partial_x v\|_{H^1}] \dd r\\
 		&\quad\quad +\|z\|_{L^\infty([s,t];L^2)}\|z\|_{L^\infty([s,t];H^1)}\int_{s}^{t}\|\partial_x^2 u\|^2_{L^2} \dd r\, .
 	\end{align*}
 	Hence we conclude from Young's inequality and the uniqueness inequality \eqref{eq:uniqueness_estimate} that
 	\begin{align*}
 		&\int_{s}^{t}\int_{D}[z|\partial_x u|^2\cdot \partial_x^2 z+v(|\partial_x u|^2-|\partial_x v|^2)\cdot \partial_x^2 z+z\times \partial^2_x u\cdot \partial^2_x z] \dd x \dd r\\
 		&\quad \leq \frac{3}{4}\int_{s}^{t}\|\partial_x^2 z\|^2_{L^2} \dd r+C_T\left[\|u^0\|^4_{H^1}+\|v^0\|^4_{H^1}\right](\|z\|^2_{L^\infty(s,t;H^1)}+\|z^0\|^2_{L^2})\, ,
 	\end{align*}
 	where $C_T>0$ is a constant depending exponentially on time, on the dimension of the domain and on some powers of the $H^1(\mathbb{S}^2)$ norm of the initial conditions $u^0,v^0\in H^1(\mathbb{S}^2)$ such that
 	\begin{align*}
 		\left[(1+T)(\|u\|^2_{L^\infty(H^1)\cap L^2(H^2)}+\|v\|^2_{L^\infty(H^1)\cap L^2(H^2)}  )\right]<C_T\left[\|u^0\|^4_{H^1}+\|v^0\|^4_{H^1}\right]\, .
 	\end{align*}
 	Here we used that $\|\partial_x u\|_{L^4}^4\lesssim \|\partial_x u\|_{H^1}\|\partial_x u\|^3_{L^2}$. With analogous computations, the first anisotropic part of the drift can be bounded by
 	\begin{align*}
 		\int_{s}^{t}\int_{D}\partial_x[u_r\times (u_r\times g'(u_r))-v_r\times(v_r\times g'(v_r))]\cdot \partial_x z_r\dd x\dd r\lesssim \|g'\|_{L^\infty}\int_{s}^{t}\|\partial_x z_r\|_{L^2}^2\dd r\, ,
 	\end{align*}
 	as well as the other anisotropic part. We now turn to the estimation of the noise $\langle ( I+\mathbb{I}+\tilde{\mathbb{I}})_{s,t}(1,1),\mathbf{1}\rangle $, which from Young's inequality and \eqref{eq:uniqueness_estimate} leads to
 	\begin{align*}
 		\langle ( I+\mathbb{I}+\tilde{\mathbb{I}})_{s,t}(1,1),\mathbf{1}\rangle &\leq \frac{\omega_{W}^{1/p}(s,t)}{2}\left[\|z\|_{L^\infty([s,t],L^2)}^2+\|z\|^2_{L^\infty([s,t],H^1)}\right]\\
 		&\lesssim \frac{\omega_{W}^{1/p}(s,t)}{2}\left[\|z^0\|^2_{L^2}+\|z\|^2_{L^\infty([s,t],H^1)}\right]\, .
 	\end{align*}
 	We now pass to the estimate of the remainder $\langle (\partial_x  z)_{s,t}^{\natural,2},\mathbf{1}\rangle$: this term depends on the mixed equations \eqref{eq:01}, \eqref{eq:10} and on the first level equation \eqref{eq:00}. In particular, for technical reasons, we estimate $\|(\partial_x  z)_{s,t}^{\natural,2}\|_{H^{-1}}$: this allows to conclude that $\langle (\partial_x  z)_{s,t}^{\natural,2},\mathbf{1}\rangle \leq \|(\partial_x  z)_{s,t}^{\natural,2}\|_{H^{-1}}\|\mathbf{1}\|_{H^1}$. We employ the sewing Lemma \ref{lemma_sewing}, after applying the operator $\delta(f)_{s,u,t}:=f_{s,t}-f_{s,u}-f_{u,t}$ to each component $(\partial_x  z)^{\natural, 2}$, we obtain
 	\begin{align*}
 	\|(\partial_x  z)^{\natural,2}_{s,t}\|_{H^{-1}}\lesssim & \;  \omega_{\mathbf{W}}^{1/p}(s,t)\big[\| z\|_{L^\infty (s,t;H^1)}^2+[\omega_{\mathcal{D}(1,0);H^{-1}}+\omega_{\mathcal{D}(0,1);H^{-1}}+\omega_{\mathcal{D} (0,0);L^2}](s,t)\big]\, ,
 	\end{align*}
	where the $H^{-1}$ norm in $\mathcal{D}(1,0)$ is bounded by $\mathcal{D}(0,1)$. By noticing that from the uniqueness proof we can bound $\omega_{\mathcal{D} (0,0);L^2}\lesssim_T \|z^0\|_{L^2}^2$
	and from similar computations as for $\mathcal{D}(1,1)$, we can conclude that
	\begin{align*}
		\omega_{\mathcal{D} (0,1);L^2}(s,t)\lesssim \frac{\epsilon}{2}\int_{s}^{t}\|\partial_x^2 z_r\|_{L^2}^2 \dd r+C_T\left[\|u^0\|^4_{H^1}+\|v^0\|^4_{H^1}\right]\omega_{\mathbf{W}}^{1/p} (s,t)[\|z\|^2_{L^\infty(s,t;H^1)}+\|z^0\|_{L^2}^2]\, .
	\end{align*}
	In conclusion, by choosing $\epsilon>0$ suitably small, we can rewrite the equation as
	\begin{equation}\label{eq:why_RP_matter}
	\begin{aligned}
	\delta \left[\|\partial_x  z\|^{2}_{L^2} \right]_{s,t} + \frac{1}{8}\int_{s}^{t} &\|\partial_x 
	^{2} z_r\|^2_{L^2} \dd r\lesssim \|\partial_x z^0\|^2_{L^2}\\
	&\quad+C_T\left[1+\omega_{\mathbf{W}}^{1/p}\right]\left[\|u^0\|^4_{H^1}+\|v^0\|^4_{H^1}\right]\left[\|z\|^2_{L^\infty(s,t;H^1)}+\|z^0\|^2_{L^2}\right]\, ,
	\end{aligned}
	\end{equation}
	which from the rough Gronwall's Lemma \ref{lem:gronwall} leads to the conclusion.
 \end{proof}
	\begin{remark} \label{remark:stratonovich_not_possible_feller}
		\textbf{On Feller's property with classical Stratonovich calculus.}
		For every fixed $T>0$ it is possible, by means of the rough path theory, to prove the local Lipschitz continuity of the solution map with respect to the initial condition in $H^1(\mathbb{S}^2)$  $\mathbb{P}$-a.s. We show that it does not seem possible to conclude the same by means of the classical It\^o-Stratonovich calculus.
		We keep the notations of the proof of Lemma \ref{lemma:feller_property}. Assume that $u,v$ are martingale solutions to \eqref{LLG} with initial conditions $u^0, v^0\in \mathcal{L}^4(\Omega;H^1(\mathbb{S}^2))$. We apply the It\^o's formula to the equation for the difference $\partial_x z=\partial_x(u-v)$
		\begin{align*}
			 \|\partial_x z_t\|_{L^2}^2-\|\partial_x z^0\|_{L^2}^2&=\langle\mathcal{D}_{0,t}(1,1)[u]-\mathcal{D}_{0,t}(1,1)[v],\mathbf{1}\rangle+\int_{D}\int_{0}^{t} \partial_x hz_r\times  \dd W_r \dd x\\
			 &\quad\quad\quad\quad+2 \int_{0}^{t}\int_{D} \partial_x h^2 [z\cdot z- z\cdot \partial_x z]\dd x\dd r
			+2\int_{D}\int_{0}^{t}\partial_x h\partial_x z\cdot  z\times  \dd W_r\dd x\, .
		\end{align*}
		If we first look at the drift estimate, we can conclude with the same estimates as in Lemma \ref{lemma:feller_property}, more specifically
		\begin{align*}
			\langle\mathcal{D}_{0,t}(1,1)[u]-\mathcal{D}_{0,t}(1,1)[v],\mathbf{1}\rangle\leq -\frac{1}{4}\int_{0}^{t}\|\partial_x^2 z\|^2_{L^2}\dd r+C_T\left[\|u^0\|^4_{H^1}+\|v^0\|^4_{H^1}\right]\left(\|z\|^2_{L^\infty(H^1)}+\|z^0\|^2_{L^2}\right).
		\end{align*}
		Since we look for the $L^\infty (H^1(\mathbb{S}^2))$-norm of $z$, we take the supremum in time into the equation and we estimate the noises by the Burkholder-Davis-Gundy inequality. To do so, we take the expectation of the energy and, from the estimate of the drifts, we conclude that we can not apply Gronwall's Lemma to achieve the required bound. This is due to the elements $\left[\|u^0\|^4_{H^1}+\|v^0\|^4_{H^1}\right]$, which do not allow to pass to get the correct powers for the Grownall's Lemma.
	\end{remark}	 
	 
\subsection{Tightness of $(\mu_T)_{T>0}$ in $H^1(\mathbb{S}^2)$ via Stratonovich calculus. }\label{sec:tightness}
In Lemma \ref{lemma:u_dot_nablau_0} we observe an orthogonality property for the solution $u$ to \eqref{LLG}, which leads to an equality useful in the following.
\begin{lemma}\label{lemma:u_dot_nablau_0}Let $u\in L^\infty(H^1)$ such that $|u_t(x)|_{\mathbb{R}^3}=1\,$ for a.e. $(t,x)\in[0,T]\times D\,$, then
	\begin{align*}
	u_t(x)\cdot \partial_x u_t(x)=0\quad a.e.\,\, (t,x)\in[0,T]\times D\, .
	\end{align*}
	In particular, it follows that $|u_t(x)\times \partial_x u_t(x)|_{\mathbb{R}^3}=|\partial_xu_t(x)|_{\mathbb{R}^3}$ for a.e. $(t,x)\in[0,T]\times D\,$.
\end{lemma}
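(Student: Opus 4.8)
The plan is to differentiate the pointwise saturation constraint $|u_t(x)|^2=1$ in the spatial variable and read off the orthogonality. Fix a time $t$ for which $u_t\in H^1(D;\mathbb{R}^3)$ (this holds for a.e.\ $t$ since $u\in L^\infty(H^1)$) and work with this single Sobolev map; the conclusion for a.e.\ $(t,x)$ then follows by Fubini's theorem.

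First I would record that in one dimension $H^1(D)\hookrightarrow L^\infty(D)$ (indeed into $C(\bar D)$), because $D$ is a bounded interval. Hence each component $u_t^i$ lies in $H^1(D)\cap L^\infty(D)$, and the product rule for Sobolev functions in one space dimension applies: for $f,g\in H^1(D)\cap L^\infty(D)$ one has $fg\in H^1(D)$ with $\partial_x(fg)=f\,\partial_x g+g\,\partial_x f$. Applying this componentwise gives $|u_t|^2=\sum_{i=1}^3 (u_t^i)^2\in H^1(D)$, with weak derivative $\partial_x|u_t|^2=2\,u_t\cdot\partial_x u_t$.

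Next, the hypothesis $|u_t(x)|^2=1$ for a.e.\ $x$ says that the $H^1$ function $|u_t|^2$ coincides a.e.\ with the constant $1$, so its weak derivative vanishes identically. Equating the two expressions for this weak derivative yields $2\,u_t\cdot\partial_x u_t=0$ a.e.\ in $D$, which is the first claim. For the \enquote{in particular} assertion I would invoke Lagrange's identity $|a\times b|^2=|a|^2|b|^2-(a\cdot b)^2$ with $a=u_t(x)$ and $b=\partial_x u_t(x)$: since $|u_t(x)|=1$ and $u_t(x)\cdot\partial_x u_t(x)=0$ a.e., this reduces to $|u_t(x)\times\partial_x u_t(x)|^2=|\partial_x u_t(x)|^2$, and taking square roots gives the stated identity.

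The only point requiring care---hence the \enquote{hard part}, though it is entirely routine---is the justification that one may differentiate the a.e.\ constraint: namely the validity of the product rule for the nonsmooth map $u_t$ and the fact that an $H^1$ function which equals a constant a.e.\ has null weak derivative. Both are standard consequences of the one-dimensional embedding $H^1\hookrightarrow L^\infty$; should one prefer to avoid invoking it, the same identities follow by mollifying $u_t$, applying the smooth chain rule to the approximants, and passing to the limit in $L^2$.
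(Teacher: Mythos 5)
Your proposal is correct and follows essentially the same route as the paper: both differentiate the constraint $|u_t|^2=1$ via the Sobolev product rule and conclude that the weak derivative of an a.e.\ constant $H^1$ function vanishes (the paper phrases this step by testing against $\phi\in C^1_0(D)$ and invoking the fundamental lemma of the calculus of variations, which is the same argument). The only cosmetic difference is that you derive the cross-product identity from Lagrange's identity $|a\times b|^2=|a|^2|b|^2-(a\cdot b)^2$, whereas the paper uses $|a\times b|=|a||b||\sin(\pi/2)|$; these are equivalent.
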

\begin{proof}
	Recall that $|u_t(x)|=1$ for a.e. $(t,x)\in[0,T]\times D$. Then from the product rule for Sobolev functions (since $u_t\in H^1$) it follows that for a.e. $t\in[0,T]$
	\begin{align*}
	\partial_x |u_t|^2=2\partial_x u_t\cdot u_t\, .
	\end{align*}
	We apply the product rule in the first equality and we observe that
	\begin{align*}
	2\int_{D} \phi u\cdot\partial_x u  d x=\int_{D} \phi \partial_x |u|^2  d x=-\int_{D} \partial_x\phi |u|^2 dx=0\, ,
	\end{align*}
	for every $\phi\in C^1_0(D)$ (the equality on the right hand side is $0$ since $|u|=1$ for a.e. $(t,x)\in [0,T]\times D$ and $\phi$ is compactly supported on $D$): thus from the fundamental lemma of calculus of variations it follows that $u_t(x)\cdot\partial_x u_t(x)=0$ for a.e. $(t,x)\in [0,T]\times D$. 
	As a consequence, $u$ is orthogonal to $\partial_x u$ and $|u_t(x)\times \partial_x u_t(x)|=|u_t(x)||\partial_x u_t(x)||\sin(\pi/2)|=|\partial_x  u_t(x)|$ for a.e. $(t,x)\in [0,T]\times D$.
\end{proof}
We derive first a linear bound in time for the gradient norm of the solution, in absence of anisotropic energy.
\begin{lemma} \label{lemma:lemma_sup_grad} Let $u$ be the unique solution to \eqref{LLG} in the sense of Definition \ref{def:solution} with $g\equiv 0$. For every $t>0$, the bound holds
	\begin{equation}\label{eq:unif_bound_time}
	\begin{aligned}
	\sup_{r\in[0,t]}\mathbb{E}\left[\|\partial_x u_r\|^2_{L^2}\right]+2\lambda_2\int_{0}^{t}\mathbb{E}\left[\|u_r\times\partial^2_x u_r\|^2_{L^2}\right]\dd r\leq\mathbb{E}\left[\|\partial_x u^0\|_{L^2}^2\right]+t\|\partial_x h\|^2_{L^2}\, .
	\end{aligned}
	\end{equation}
\end{lemma}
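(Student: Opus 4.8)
The plan is to obtain an energy identity for the $H^1$-seminorm by applying It\^o's formula to $F(u)=\|\partial_x u\|^2_{L^2}$, then to take expectations so that the martingale part drops out, and finally to integrate in time. Since $g\equiv0$, after converting the Stratonovich noise $h\,u\times\circ\dd w$ (with $w$ a three-dimensional Brownian motion) into It\^o form, the diffusion coefficients are $\sigma_j(u)=h\,(u\times e_j)$ for the standard basis vectors $e_j$, and the first computation I would carry out is the Stratonovich-to-It\^o correction $\tfrac12\sum_j\sigma_j'(u)[\sigma_j(u)]=\tfrac{h^2}{2}\sum_j(u\times e_j)\times e_j=-h^2 u$, where I used $\sum_j(u\times e_j)\times e_j=-2u$.

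I would then apply It\^o's formula to $F(u)=\|\partial_x u\|^2_{L^2}$, recording that $F'(u)[v]=2\langle\partial_x u,\partial_x v\rangle=-2\langle\partial_x^2 u,v\rangle$ after one integration by parts against the null Neumann boundary condition, and that the second-order term produces the quadratic variation $\sum_j\|\partial_x\sigma_j(u)\|^2_{L^2}$. Three algebraic simplifications then drive the estimate. First, the $\lambda_1$-term vanishes since $\langle\partial_x^2 u,u\times\partial_x^2 u\rangle=0$. Second, the $\lambda_2$-term yields precisely the dissipation $-2\lambda_2\|u\times\partial_x^2 u\|^2_{L^2}$, using $u\times(u\times a)=(u\cdot a)u-a$ valid for $|u|=1$ together with $|u\times a|^2=|a|^2-(u\cdot a)^2$. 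Third, the It\^o-correction drift $2\langle\partial_x^2 u,h^2 u\rangle$ and the cross terms hidden in the quadratic variation are treated by integrating by parts and invoking the orthogonality $u\cdot\partial_x u=0$ from Lemma \ref{lemma:u_dot_nablau_0}; this makes the two $\int_D h^2|\partial_x u|^2\,\dd x$ contributions cancel exactly and, together with $\sum_j|u\times e_j|^2=2$, leaves only a term proportional to $\|\partial_x h\|^2_{L^2}$.

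Collecting everything, I expect a pathwise identity of the form
\[
\delta\big[\|\partial_x u\|^2_{L^2}\big]_{s,t}+2\lambda_2\int_s^t\|u_r\times\partial_x^2 u_r\|^2_{L^2}\,\dd r=M_{s,t}+c_0(t-s)\|\partial_x h\|^2_{L^2},
\]
where $M$ is an It\^o stochastic integral and $c_0>0$ depends only on the normalisation of the noise. Taking expectation annihilates $M$ (after localisation, using the $L^2(H^2)$ bound to see it is a genuine martingale) --- this is exactly the point where the classical It\^o/Stratonovich calculus, rather than the rough-path formulation, is indispensable, as announced in the introduction --- and gives for every $r>0$ the identity $\mathbb{E}\|\partial_x u_r\|^2_{L^2}+2\lambda_2\int_0^r\mathbb{E}\|u\times\partial_x^2 u\|^2_{L^2}=\mathbb{E}\|\partial_x u^0\|^2_{L^2}+c_0 r\|\partial_x h\|^2_{L^2}$. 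Discarding the nonnegative dissipation integral bounds $\sup_{r\in[0,t]}\mathbb{E}\|\partial_x u_r\|^2_{L^2}$, while reading the identity at $r=t$ bounds the dissipation integral; together these give \eqref{eq:unif_bound_time}.

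The step I expect to be the main obstacle is the rigorous justification of It\^o's formula for $F(u)=\|\partial_x u\|^2_{L^2}$: by Definition \ref{def:solution} the solution lies only in $L^\infty(0,T;H^1)\cap L^2(0,T;H^2)$, and $F$ is not continuous on $L^2$, so a naive It\^o formula for an $L^2$-valued semimartingale does not apply. I would resolve this either by a Galerkin approximation in the eigenbasis of the Neumann Laplacian, deriving the identity for the projections and passing to the limit using the $L^2(H^2)$ bound, or by a variational It\^o formula in the Gelfand triple $H^2\hookrightarrow H^1\hookrightarrow L^2$. The second delicate point is purely bookkeeping: one must check that every integration by parts is legitimate under the Neumann condition and that the $\int_D h^2|\partial_x u|^2\,\dd x$ terms cancel exactly through $u\cdot\partial_x u=0$, since a stray sign or boundary contribution there would destroy the cancellation that produces the clean linear-in-time bound.
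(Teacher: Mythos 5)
Your proposal is correct and follows essentially the same route as the paper: the paper likewise switches from the rough-path to the classical Stratonovich formulation, computes the Stratonovich-to-It\^o correction $c(X)=-2X$, applies It\^o's formula to the squared gradient norm so that the $h^2|\partial_x u|^2$ correction cancels against the quadratic variation, uses $|u|=1$ (and $u\cdot\partial_x u=0$) to reduce the remaining drift to $t\|\partial_x h\|^2_{L^2}$, and takes expectation to annihilate the It\^o integrals. The only difference is presentational — the paper first writes the Stratonovich equation for $\partial_x u$ and then applies It\^o to $f(X)=X\cdot X$, whereas you apply It\^o directly to $F(u)=\|\partial_x u\|^2_{L^2}$ — and your remarks on rigorously justifying the It\^o formula go slightly beyond what the paper records.
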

\begin{proof}
	The rough integral coincides with the Stratonovich stochastic integral apart from a set of null measure, therefore we can switch to the classical It\^o-Stratonovich calculus. We write the equation for the derivative in Stratonovich form
	\begin{align*}
	\delta \partial_x u_{s,t}=\int_{s}^{t}\partial_x[\lambda_1 u_r\times \partial_x^2 u_r-\lambda_2 u_r\times (u_r\times \partial_x^2 u_r)] \dd r+\int_{s}^{t}h\partial_x u_r\times \circ \dd W_r+\int_{s}^{t} \partial_x hu_r\times \circ \dd W_r\, .
	\end{align*}
	First we convert the equation from Stratonovich integration to It\^o integration. We would like to determine $c(x)\equiv(c_1(x),c_2(x),c_3(x))$ defined for all $i=1,2,3$ by
	\begin{align*}
	c_i(x)=\sum_{k=1}^{3}\sum_{j=1}^{3}\frac{\partial \gamma_{i,j}}{\partial x_j}(x)\gamma_{j,k}\, ,
	\end{align*}
	where $x\equiv(x_1,x_2,x_3)$ and the map  $\gamma(x)\equiv (\gamma_{i,j}(x))_{i,j=1,2,3}$ is given by
\[
\gamma(x)=x\times \cdot=
\left[ {\begin{array}{ccc}
	0 & -x_3&x_2\\
	x_3 & 0&-x_1\\
	-x_2& x_1 &0\\
	\end{array} } \right]
\]
where $x\equiv(x_1,x_2,x_3)\in \mathbb{R}^3$. Thus we can rephrase the integrals as
	\begin{align*}
	\int_{s}^{t}\gamma(u_r)\circ\dd W_r=\frac{1}{2}\int_{s}^{t} c(u_r) dr+\int_{s}^{t}\gamma(u_r)dW_r\, .
	\end{align*}
	By using the above formula, we conclude that $c(x)=[\gamma_{2,3}-\gamma_{3,2},\gamma_{3,1}-\gamma_{1,3},\gamma_{1,2}-\gamma_{2,1}] (x)$, which leads to $c(X)=-2X$. Hence
	\begin{align*}
		\int_{s}^{t}h\partial_x u_r\times \circ \dd W_r+\int_{s}^{t} \partial_x h u_r\times \circ \dd W_r&=\int_{s}^{t}h\partial_x u_r\times  \dd W_r-\int_{s}^{t}h^2\partial_x u_r \dd r\\
		&\quad +\int_{s}^{t} \partial_x h u_r\times \dd  W_r-\int_{s}^{t} \partial_x h^2 u_r \dd r\, .
	\end{align*}
	From It\^o's formula applied to $f(X)\equiv f(X_1,X_2,X_3)=X\cdot X$, where we use that $\nabla_Xf(X)=2X$ and $\nabla^2_Xf(X)=2I\in \mathbb{R}^3\otimes\mathbb{R}^3$. This leads to
	\begin{align*}
	\|\partial_x u_t\|^2_{L^2}+2\lambda_2\int_{0}^{t}\|u\times \partial_x^2 u\|_{L^2}^2\dd r&=\|\partial_x u^0\|^2_{L^2}-2\int_{0}^{t}\int_{D} h^2\partial_x u\cdot \partial_x u\dd x\dd r-2\int_{0}^{t}\int_{D}\partial_x h^2u\cdot \partial_x u\dd x\dd r\\
	&+2 \int_{0}^{t}\int_{D} h^2\partial_x u\cdot \partial_x u\dd x\dd r+2 \int_{0}^{t}\int_{D} \partial_x h^2 u\cdot u\dd x\dd r\\
	& +2\int_{D}\int_{0}^{t}h\partial_x u\cdot\partial_x u\times  \dd W_r \dd x +2\int_{D}\int_{0}^{t}\partial_x h\partial_x u\cdot  u\times  \dd W_r\dd x\, .
	\end{align*}
	By taking expectation into the above equation, the energy inequality takes the form
	\begin{align}
	\mathbb{E}\left[\|\partial_x u_t\|_{L^2}^2\right]&+2\lambda_2\int_{0}^{t}\mathbb{E}\left[\|u\times \partial_x^2 u\|^2_{L^2}\right] \dd r=\mathbb{E}\left[\|\partial_x u_0\|_{L^2}^2\right]+\mathbb{E}\left[2 \int_{0}^{t}\int_{D} \partial_x h^2 u\cdot u \dd x\dd r\right]\label{eq:energy_0}\\
	&\quad\quad\quad+\mathbb{E}\left[2\int_{D}\int_{0}^{t}h\partial_x u\cdot\partial_x u\times  \dd W_r \dd x +2\int_{D}\int_{0}^{t}\partial_x h\partial_x u\cdot  u\times  \dd W_r\dd x\right]\label{eq:energy_1}\, .
	\end{align}
	The integral in \eqref{eq:energy_0} is deterministic, since $|u_t(x)|^2_{\mathbb{R}^3}=1$ for a.e. $(x,t)\in D\times [0,T]$ and $\mathbb{P}$-a.s. and therefore we rewrite it as 
	\begin{align}
	\mathbb{E}\left[2 \int_{0}^{t}\int_{D} \partial_x h^2 u_r\cdot u_r \dd x\dd r\right]=2t\|\partial_x h\|_{L^2}^2\, .
	\end{align}
	The stochastic integrals in \eqref{eq:energy_1} are an It\^o integrals and thus it have null expectation, which concludes the proof.
	\end{proof}
	We introduce now Lemma \ref{lemma:poincarre}, which we need in Lemma \ref{lemma:energy_anisotropic}.
	\begin{lemma}\label{lemma:poincarre}Let $C_p>0$ be the Poincaré's constant associated to $D$ and $u$ be the unique solution to \eqref{LLG},  then the inequality holds
	\begin{align*}
	C_p^{-1}\|u\times\partial_x u\|_{L^2}\leq	\|u\times\partial^2_x u\|_{L^2}\, .
	\end{align*}
	\end{lemma}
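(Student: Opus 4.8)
The plan is to identify $w := u \times \partial_x u$ as the correct auxiliary vector field: its spatial derivative is \emph{exactly} the quantity $u \times \partial_x^2 u$ appearing on the right-hand side, and the null Neumann condition on $u$ forces $w$ to vanish on $\partial D$, so that the asserted inequality is nothing but Poincaré's inequality applied to $w$. The whole argument is pointwise in time, so I would fix a time $t$ for which $u_t \in H^2(D;\mathbb{R}^3)$ — this holds for a.e.\ $t$ by property \ref{sol_ii} of Definition \ref{def:solution} — and prove the estimate for such $t$; by one-dimensional Sobolev embedding $u_t \in C^1(\overline{D})$ and $\partial_x u_t \in H^1 \hookrightarrow C^0$, so all the manipulations below are licit.

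First I would compute the derivative of $w$. Using the weak product rule for the cross product together with its antisymmetry, $\partial_x u_t \times \partial_x u_t = 0$, whence
\[
\partial_x\big(u_t \times \partial_x u_t\big) = \partial_x u_t \times \partial_x u_t + u_t \times \partial_x^2 u_t = u_t \times \partial_x^2 u_t \, .
\]
In particular $w = u_t \times \partial_x u_t \in H^1(D;\mathbb{R}^3)$ with $\partial_x w \in L^2$. Next I would invoke the boundary condition: by the null Neumann condition in Definition \ref{def:solution} one has $\partial_x u_t(x) = 0$ for $x \in \partial D$, and therefore $w(x) = u_t(x) \times \partial_x u_t(x) = 0$ on $\partial D$. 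Since $w \in H^1(D)$ admits a continuous representative in one dimension, it genuinely vanishes at the endpoints of $D$, i.e.\ $w \in H^1_0(D;\mathbb{R}^3)$.

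Finally, applying Poincaré's (Friedrichs') inequality for functions in $H^1_0$, with $C_p$ the Poincaré constant of $D$, gives $\|w\|_{L^2} \le C_p \|\partial_x w\|_{L^2}$, which by the identity above reads $\|u_t \times \partial_x u_t\|_{L^2} \le C_p \|u_t \times \partial_x^2 u_t\|_{L^2}$; dividing by $C_p$ yields the claim for a.e.\ $t$. I do not expect a genuine obstacle here: the essential idea is the recognition that $u \times \partial_x u$ differentiates to $u \times \partial_x^2 u$ and inherits homogeneous boundary data, and the only point requiring a little care is the regularity/trace justification that places $w$ in $H^1_0$, which is immediate from the $H^2$-regularity of $u_t$ and the Neumann condition.
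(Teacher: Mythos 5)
Your proof is correct and follows essentially the same route as the paper: both identify $w=u\times\partial_x u$, compute $\partial_x w=u\times\partial_x^2 u$ via $\partial_x u\times\partial_x u=0$, use the one-dimensional Sobolev embedding together with the null Neumann condition to place $w$ in $H^1_0(D;\mathbb{R}^3)$, and then apply Poincar\'e's inequality. The only cosmetic difference is that you make the a.e.-in-$t$ regularity bookkeeping slightly more explicit than the paper does.
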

	\begin{proof}
	As a consequence of Lemma \ref{lemma:u_dot_nablau_0}, it follows that $\|\partial_x u\|_{L^2}^2=\|\partial_x u\times u\|_{L^2}^2$ for a.e. $t>0$ and $\mathbb{P}-$a.s. We also observe that, from $a\times a=0$ for all $a\in\mathbb{R}^3$, it holds in $L^2$ that 
	\begin{align*}
	\partial_x (u\times \partial_x u)=\partial_x u\times \partial_x u+u\times \partial_x^2 u=u\times \partial_x^2 u\, .
	\end{align*}
	Since we are dealing with a one dimensional domain, from Morrey's inequality $H^1(\mathbb{S}^2)$ is continuously embedded into $C(D;\mathbb{R})$: as a consequence both $u_t$ and $\partial_x u_t$ are continuous in the space variable for a.e. $t\geq 0$ and $\mathbb{P}$-a.s. Thus we can infer that we can extend the derivative $\partial_x u$ continuously to $0$ on the boundary. Because of the continuity of $\partial_x u$ on the boundary and of  $u$ on the boundary, also $u\times \partial_x u$ is null on the boundary: thus $u\times \partial_x u\in H^1_0$. We are therefore in the conditions to apply Poincaré's inequality: there exists $C_p>0$ such that 
	\begin{align*}
	 	C_p^{-1}\|u\times\partial_x u\|_{L^2}\leq\|\partial_x (u\times\partial_x u)\|_{L^2}=	\|u\times\partial^2_x u\|_{L^2}\, .
	\end{align*}
	which leads to the conclusion.
\end{proof}
We add the anisotropic energy and observe how inequality \eqref{eq:unif_bound_time} changes.
\begin{lemma}\label{lemma:energy_anisotropic} For every $t>0$ and for a positive constant $C(\lambda_1,\lambda_2)>0$, it holds 
	\begin{equation}
	\begin{aligned}\label{eq:unif_bound_time_1}
	\sup_{r\in[0,t]}\mathbb{E}\left[\|\partial_x u_r\|^2_{L^2}\right]+&\frac{3\lambda_2}{2}\int_{0}^{t}\mathbb{E}\left[\|u_r\times\partial^2_x u_r\|^2_{L^2}\right]\dd r\\
	&\quad\quad\quad\leq\mathbb{E}\left[\|\partial_x u^0\|_{L^2}^2\right]+t\left[\|\partial_x h\|^2_{L^2}+[\sup_{i,j}|A_{i,j}|^2+|b|^2]C(\lambda_1,\lambda_2)\right]\, .
	\end{aligned}
	\end{equation}
\end{lemma}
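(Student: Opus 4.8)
The plan is to rerun the energy identity of Lemma~\ref{lemma:lemma_sup_grad}, the only new ingredient being the drift generated by the anisotropy. First I would pass to the It\^o formulation of the equation for $\partial_x u$ exactly as in Lemma~\ref{lemma:lemma_sup_grad}: the anisotropy enters only the $\dd r$-drift, so the It\^o--Stratonovich corrector is untouched, and applying It\^o's formula to $\|\partial_x u_t\|_{L^2}^2$ reproduces, for the contribution of $\lambda_1 u\times\partial_x^2 u-\lambda_2 u\times(u\times\partial_x^2 u)$ together with the noise, precisely \eqref{eq:unif_bound_time}: the dissipation $2\lambda_2\int_0^t\mathbb{E}\left[\|u_r\times\partial_x^2 u_r\|_{L^2}^2\right]\dd r$ on the left and $\mathbb{E}\left[\|\partial_x u^0\|_{L^2}^2\right]+t\|\partial_x h\|_{L^2}^2$ on the right, the It\^o integrals having null expectation. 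It then remains only to estimate the extra drift produced by the anisotropy which, writing $G:=g'(u)=Au+b$, equals
\begin{align*}
2\int_D\partial_x\big[\lambda_1\,u\times G-\lambda_2\,u\times(u\times G)\big]\cdot\partial_x u\,\dd x\,.
\end{align*}

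The decisive step is to integrate by parts in $x$ so that the spatial derivative falls on $\partial_x^2 u$; the boundary term vanishes by the Neumann condition $\partial_x u=0$ on $\partial D$, leaving $-2\lambda_1\int_D (u\times G)\cdot\partial_x^2 u\,\dd x+2\lambda_2\int_D (u\times(u\times G))\cdot\partial_x^2 u\,\dd x$. I would then invoke the scalar triple-product identities
\begin{align*}
(u\times G)\cdot\partial_x^2 u=-G\cdot(u\times\partial_x^2 u)\,,\qquad (u\times(u\times G))\cdot\partial_x^2 u=-(u\times G)\cdot(u\times\partial_x^2 u)\,,
\end{align*}
so that the whole anisotropic drift is expressed through the single dissipation quantity $u\times\partial_x^2 u$, namely as $2\lambda_1\int_D G\cdot(u\times\partial_x^2 u)\,\dd x-2\lambda_2\int_D (u\times G)\cdot(u\times\partial_x^2 u)\,\dd x$. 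This is the heart of the matter: the geometry forces the Laplacian to enter only through the tangential combination $u\times\partial_x^2 u$, the one that the energy balance controls, and never as a bare $\partial_x^2 u$ (which can be bounded only with power $1/2$).

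By Cauchy--Schwarz and Young's inequality with parameter $\epsilon=\lambda_2/4$ in each of the two terms, I can absorb a total of $\tfrac{\lambda_2}{2}\int_0^t\mathbb{E}\left[\|u_r\times\partial_x^2 u_r\|_{L^2}^2\right]\dd r$ into the dissipation, which turns the coefficient $2\lambda_2$ into the claimed $\tfrac{3\lambda_2}{2}$. What survives is $\tfrac{4\lambda_1^2}{\lambda_2}\|G\|_{L^2}^2+4\lambda_2\|u\times G\|_{L^2}^2$; since $|u|=1$ a.e.\ one has $|u\times G|\le|G|$ and the pointwise bound $|G|^2=|Au+b|^2\le 2|Au|^2+2|b|^2\lesssim\sup_{i,j}|A_{i,j}|^2+|b|^2$, whence $\|G\|_{L^2}^2,\|u\times G\|_{L^2}^2\lesssim_{|D|}\sup_{i,j}|A_{i,j}|^2+|b|^2$. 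Taking expectation (the martingale part drops), integrating in time to produce the factor $t$, and taking the supremum over $r\in[0,t]$ exactly as in Lemma~\ref{lemma:lemma_sup_grad}, yields \eqref{eq:unif_bound_time_1}, with $C(\lambda_1,\lambda_2)$ absorbing $4\lambda_1^2/\lambda_2$, $4\lambda_2$, $|D|$ and the numerical constants.

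The only real obstacle is the one settled in the second paragraph. A careless estimate of the anisotropic drift either leaves an uncontrollable $\partial_x^2 u$, or, integrating by parts the other way, produces a term of size $\|\partial_x u\|_{L^2}^2$ that could be absorbed only through the Poincar\'e inequality of Lemma~\ref{lemma:poincarre} at the price of a smallness assumption on $A$ and $b$. Channelling the entire anisotropic contribution through $u\times\partial_x^2 u$ removes any such restriction and is what produces the clean bound, linear in $t$ and quadratic in $(A,b)$; everything else is a verbatim repetition of Lemma~\ref{lemma:lemma_sup_grad}.
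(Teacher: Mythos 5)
Your proof is correct and follows essentially the same route as the paper: integrate the anisotropic drift by parts onto $\partial_x^2 u$ (the boundary term vanishing by the Neumann condition), use the triple-product/orthogonality identities to channel everything through $u\times\partial_x^2 u$, absorb a fraction of the dissipation by weighted Young, and bound $\|u\times g'(u)\|_{L^2}^2$ by $\sup_{i,j}|A_{i,j}|^2+|b|^2$ using $|u|=1$. If anything, your bookkeeping (absorbing $\lambda_2/4$ from each of the two terms to leave the stated $\tfrac{3\lambda_2}{2}$) tracks the claimed constant more transparently than the paper's own sketch.
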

\begin{proof}
Recall the shape of the anisotropic energy: for $A\in \mathcal{L}(\mathbb{R}^3)$ and $b\in\mathbb{R}^3$ as $g'(x)=Ax+b$, for all $x\in \mathbb{R}^3$. We turn to the drift elements appearing in \eqref{LLG}, which from the orthogonality in \eqref{eq:rel_a_laplace} leads to
\begin{align*}
	2\lambda_1\int_{0}^{T}\int_{D}\partial_x(u_r\times g'(u_r))\cdot \partial_x u_r \dd x \dd r&= -2\lambda_1\int_{0}^{T}\int_{D}(u_r\times g'(u_r))\cdot \partial_x^2u_r \dd x \dd r\\
	&= -2\lambda_1\int_{0}^{T}\int_{D}(u_r\times g'(u_r))\cdot (u_r\times\partial_x^2u_r )\dd x \dd r\, .
\end{align*}
In the case $\lambda_1=0$, the other term does not appear. From the weighted Young's inequality, we can bound this term for $\epsilon>0$
\begin{align*}
2\lambda_1\int_{0}^{T}\int_{D}\partial_x(u_r\times g'(u_r))\cdot \partial_x u_r \dd x \dd r\leq \frac{2\lambda_1^2\epsilon}{4}\int_{0}^{T}\|u_r\times\partial_x^2 u_r\|^2_{L^2}\dd r+\frac{2\cdot 4}{ 3\epsilon}\int_{0}^{T}\|u_r\times g'(u_r)\|^2_{L^2} \dd r\, ,
\end{align*}
where we use $\epsilon=\lambda_2/\lambda_1^2$. With analogous considerations and $\epsilon=1/\lambda_2$, we obtain the bound
\begin{align*}
2\lambda_2\int_{0}^{T}\int_{D}\partial_x(u\times (u\times g'(u)))\cdot \partial_x u_r \dd x \dd r\leq \frac{2\lambda_2}{4}\int_{0}^{T}\|u\times\partial_x^2 u\|^2_{L^2}\dd r+\frac{2\cdot 4\lambda_2}{ 3}\int_{0}^{T}\|u\times g'(u)\|^2_{L^2} \dd r\, .
\end{align*}
\end{proof}
\begin{remark} \label{ref:different_choice_noise}\textbf{On a different choice of the noise.}
Assume now that equation \eqref{LLG} is driven by the noise
\begin{align*}
\int_{0}^{t}u_r\times h\circ \dd W_r\, ,
\end{align*}
where $h\in H^1(D;\mathbb{R}^3)$ and $W$ is  a real valued Brownian motion. The It\^o formula leads, in this case, to a different outcome. Nevertheless the final estimate coincides, up to a non relevant constant, to \eqref{eq:unif_bound_time_anisotrpoic}. The only difference is that one needs to pass through the Poincaré's inequality to absorb to the left hand side
\begin{align*}
\int_{0}^{t}\int_{D} |h||\partial_x u_r||\partial_x h||u_r|\dd x\dd r\, .
\end{align*}
This fact holds true also in absence of anisotropic energy.
\end{remark}

\begin{lemma}\label{lemma:relazione_laplaciano}
	It holds for a.e. $t>0$ and $\mathbb{P}$-a.s. that
	\begin{align*}
	\|\partial^2_x u\|_{L^2}^2=\|\partial_x u\|_{L^4}^4+\|u\times\partial_x^2 u\|^2_{L^2}\, .
	\end{align*}
\end{lemma}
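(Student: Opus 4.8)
The plan is to reduce the claimed $L^2$-identity to a pointwise identity in $\mathbb{R}^3$ valid for a.e.\ $(t,x)$ and then integrate over $D$. The key pointwise ingredient is the Lagrange identity for the cross product: for any $a,w\in\mathbb{R}^3$ one has $|a\times w|^2=|a|^2|w|^2-(a\cdot w)^2$. Applying this with $a=u_t(x)$ and $w=\partial_x^2 u_t(x)$, and using the spherical constraint $|u_t(x)|=1$ from property \ref{sol_i} of Definition \ref{def:solution}, yields pointwise
\begin{align*}
|\partial_x^2 u|^2=(u\cdot\partial_x^2 u)^2+|u\times\partial_x^2 u|^2\, .
\end{align*}
Thus the whole statement will follow once I identify $(u\cdot\partial_x^2 u)^2$ with $|\partial_x u|^4$. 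Geometrically this is just the orthogonal splitting of $\partial_x^2 u$ into its component along $u$ and its component tangent to the sphere.

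First I would establish the relation $u\cdot\partial_x^2 u=-|\partial_x u|^2$. For this I start from Lemma \ref{lemma:u_dot_nablau_0}, which gives $u\cdot\partial_x u=0$ a.e. Since the solution satisfies $u\in L^2(0,T;H^2)$ by property \ref{sol_ii}, for a.e.\ $t$ both $u_t$ and $\partial_x u_t$ belong to $H^1(D;\mathbb{R}^3)$, hence by the one dimensional Sobolev embedding they are continuous and bounded. The product rule for Sobolev functions then applies to the scalar map $x\mapsto u_t(x)\cdot\partial_x u_t(x)$, which is therefore in $H^1(D)$ with weak derivative $\partial_x(u\cdot\partial_x u)=|\partial_x u|^2+u\cdot\partial_x^2 u$. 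Because this $H^1$ function vanishes a.e.\ by Lemma \ref{lemma:u_dot_nablau_0}, its weak derivative vanishes a.e.\ as well, giving $u\cdot\partial_x^2 u=-|\partial_x u|^2$ for a.e.\ $(t,x)$.

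Plugging this into the pointwise decomposition above produces $|\partial_x^2 u|^2=|\partial_x u|^4+|u\times\partial_x^2 u|^2$ for a.e.\ $(t,x)$. Integrating in $x$ over $D$ then yields exactly
\begin{align*}
\|\partial_x^2 u\|_{L^2}^2=\|\partial_x u\|_{L^4}^4+\|u\times\partial_x^2 u\|_{L^2}^2\, ,
\end{align*}
where $\int_D|\partial_x u|^4\,\dd x=\|\partial_x u\|_{L^4}^4$ is finite for a.e.\ $t$ thanks to $\partial_x u_t\in H^1\hookrightarrow L^4$ in dimension one.

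I do not anticipate a genuine obstacle. The only point requiring care is the justification that the product rule and the ``an $H^1$ function that vanishes a.e.\ has vanishing weak derivative'' argument are legitimate; this is precisely where the regularity $u\in L^2(0,T;H^2)$ and the embedding $H^1(D)\hookrightarrow C(\bar D)$ enter. Everything else is elementary vector algebra, so the main work is simply bookkeeping the a.e.\ statements in $(t,x)$ together with the $\mathbb{P}$-a.s.\ qualifier.
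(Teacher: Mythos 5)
Your proof is correct and follows essentially the same route as the paper: both arguments reduce to the relation $u\cdot\partial_x^2 u=-|\partial_x u|^2$ (obtained by differentiating $u\cdot\partial_x u=0$) combined with the cross-product algebra $|w|^2=(u\cdot w)^2+|u\times w|^2$ for $|u|=1$. The paper packages this algebra as the identity $-u\times(u\times\partial_x^2 u)=\partial_x^2 u+u|\partial_x u|^2$ from \cite[Lemma 2.4]{brzezniak_LDP} tested against $\partial_x^2 u$ in $L^2$, whereas you work pointwise via the Lagrange identity, but the content is the same.
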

\begin{proof} We recall from Lemma 2.4 in \cite{brzezniak_LDP}. It holds in $L^2$ that
	\begin{align}\label{eq:rel_a_laplace}
	-u\times(u\times \partial^2_x u)=\partial_x^2 u+u|\partial_xu|^2\, .
	\end{align}
	Since $\partial^2_x u\in L^2$, we can test \eqref{eq:rel_a_laplace}
	\begin{align*}
		-\int_{\mathbb{T}^1}u\times(u\times \partial^2_x u)\cdot \partial^2_x u \dd x=\int_{\mathbb{T}^1} (\partial_x^2 u+u|\partial_xu|^2)\cdot \partial^2_x u \dd x\, ,
	\end{align*}
	By recalling that $a\times(a\times b)\cdot b=-|a\times b|^2$ for all $a,b\in\mathbb{R}^3$ and by using that $|u|^2=1$, we conclude that
	\begin{align*}
	\|\partial_x^2 u\|^2_{L^2}=\|u\times\partial_x^2 u\|^2_{L^2}+\|\partial_x u\|^4_{L^4}\, ,
	\end{align*}
	where we used the equality $|\partial_x u|^2=-\partial_x^2 u\cdot u$ for a.e. $t>0$, $x\in D$ and $\mathbb{P}$-a.s.
\end{proof}

\begin{lemma} \label{lemma:linear_growth_tightness}There exists a constant $C\equiv C(|D|,\lambda_1,\lambda_2,\bar{G},\|u^0\|_{H^1})>0$ independent on the time, such that for all $t>0$
\begin{align}\label{eq:estimate_laplaciano_time}
\int_{0}^{t}\mathbb{E}\left[\|\partial_x^2 u_r\|^{1/2}_{L^2}\right] \dd r \leq C\mathbb{E}\left[\|\partial_x u^0\|_{L^2}^2\right]+Ct\, .
\end{align}
\end{lemma}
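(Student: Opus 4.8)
The plan is to reduce the $1/2$-power of $\|\partial_x^2 u_r\|_{L^2}$ to powers of $\|u_r\times\partial_x^2 u_r\|_{L^2}$, whose square is controlled linearly in time by Lemma \ref{lemma:energy_anisotropic}, and then to integrate using Hölder's inequality on the product space $\Omega\times[0,t]$. First I would start from the algebraic identity in Lemma \ref{lemma:relazione_laplaciano}, namely $\|\partial_x^2 u\|_{L^2}^2=\|\partial_x u\|_{L^4}^4+\|u\times\partial_x^2 u\|_{L^2}^2$, and apply the one dimensional interpolation inequality (Lemma \ref{lemma:interp_ladyz}) in the form $\|\partial_x u\|_{L^4}^4\lesssim \|\partial_x u\|_{L^2}^3\|\partial_x u\|_{H^1}\lesssim \|\partial_x u\|_{L^2}^4+\|\partial_x u\|_{L^2}^3\|\partial_x^2 u\|_{L^2}$. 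A weighted Young inequality then absorbs the mixed term $\|\partial_x u\|_{L^2}^3\|\partial_x^2 u\|_{L^2}$ into $\tfrac12\|\partial_x^2 u\|_{L^2}^2$ at the cost of a term $\|\partial_x u\|_{L^2}^6$, yielding
\begin{align*}
\|\partial_x^2 u\|_{L^2}^2\lesssim \|\partial_x u\|_{L^2}^4+\|\partial_x u\|_{L^2}^6+\|u\times\partial_x^2 u\|_{L^2}^2\, .
\end{align*}
Taking the fourth root and using subadditivity of $s\mapsto s^{1/4}$ gives
\begin{align*}
\|\partial_x^2 u\|_{L^2}^{1/2}\lesssim \|\partial_x u\|_{L^2}+\|\partial_x u\|_{L^2}^{3/2}+\|u\times\partial_x^2 u\|_{L^2}^{1/2}\, .
\end{align*}

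The decisive step, and the point where the geometry of the equation enters, is to eliminate the gradient norms $\|\partial_x u\|_{L^2}$ in favour of $\|u\times\partial_x^2 u\|_{L^2}$. By Lemma \ref{lemma:u_dot_nablau_0} one has $\|\partial_x u\|_{L^2}=\|u\times\partial_x u\|_{L^2}$, and by the geometric Poincaré inequality of Lemma \ref{lemma:poincarre} this is bounded by $C_p\|u\times\partial_x^2 u\|_{L^2}$. Substituting and writing $Y_r:=\|u_r\times\partial_x^2 u_r\|_{L^2}$, the previous display becomes
\begin{align*}
\|\partial_x^2 u_r\|_{L^2}^{1/2}\lesssim Y_r^{1/2}+Y_r+Y_r^{3/2}\, .
\end{align*}
This replacement is essential: a direct estimate keeping $\|\partial_x u_r\|_{L^2}$ would only give, after taking expectations and using that Lemma \ref{lemma:energy_anisotropic} permits $\mathbb{E}[\|\partial_x u_r\|_{L^2}^2]$ to grow linearly in $r$, a time-integral of order $t^{3/2}$, which is too weak; by contrast, it is the \emph{square} of $Y_r$ whose time-integral is linear.

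It remains to take expectations, integrate over $[0,t]$, and control $\int_0^t \mathbb{E}[Y_r^{\theta}]\,\dd r$ for $\theta\in\{1/2,1,3/2\}$. Working on the product measure $\mathbb{P}\otimes\mathrm{Leb}|_{[0,t]}$, of total mass $t$, and applying Hölder's inequality with the pair of exponents that turns $Y^{\theta}$ into $Y^2$, I obtain
\begin{align*}
\int_0^t\mathbb{E}[Y_r^{1/2}]\,\dd r\leq I^{1/4}t^{3/4}\, ,\qquad \int_0^t\mathbb{E}[Y_r]\,\dd r\leq I^{1/2}t^{1/2}\, ,\qquad \int_0^t\mathbb{E}[Y_r^{3/2}]\,\dd r\leq I^{3/4}t^{1/4}\, ,
\end{align*}
where $I:=\int_0^t\mathbb{E}[Y_r^2]\,\dd r$. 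By Lemma \ref{lemma:energy_anisotropic} one has $I\lesssim \mathbb{E}[\|\partial_x u^0\|_{L^2}^2]+t$.

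Finally, splitting $(a+b)^{\rho}\leq a^{\rho}+b^{\rho}$ and applying Young's inequality to each cross term of the form $(\mathbb{E}[\|\partial_x u^0\|_{L^2}^2])^{\rho}t^{1-\rho}$ (with $\rho\in\{1/4,1/2,3/4\}$) produces exactly the claimed bound $C\,\mathbb{E}[\|\partial_x u^0\|_{L^2}^2]+Ct$, with $C$ independent of $t$ and depending only on the fixed parameters of the equation. The main obstacle throughout is ensuring the bound is genuinely linear in $t$: this hinges entirely on the geometric substitution of the second step, since naively the $L^4$-interpolation term spoils the time growth, and is precisely the reason why only the $1/2$-power of $\|\partial_x^2 u\|_{L^2}$ can be controlled in this way.
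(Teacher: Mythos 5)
Your proof is correct, and it reaches the bound by the same skeleton as the paper: the identity $\|\partial_x^2 u\|_{L^2}^2=\|\partial_x u\|_{L^4}^4+\|u\times\partial_x^2 u\|_{L^2}^2$ of Lemma \ref{lemma:relazione_laplaciano}, the one-dimensional interpolation of the $L^4$ norm, absorption of the Laplacian by Young's inequality, and the dissipation estimate of Lemma \ref{lemma:energy_anisotropic}. The genuine difference lies in how the leftover gradient terms are handled. The paper bounds $\|\partial_x u_r\|_{L^2}^{3/2}$ and $\|u_r\times\partial_x^2 u_r\|_{L^2}^{1/2}$ pointwise by their squares plus a constant and then asserts $\int_0^t\mathbb{E}[\|\partial_x u_r\|_{L^2}^2]\,\dd r\lesssim \mathbb{E}[\|\partial_x u^0\|_{L^2}^2]+t$; as written, the only available control on that quantity from Lemma \ref{lemma:lemma_sup_grad} or Lemma \ref{lemma:energy_anisotropic} is the supremum-in-time bound, which after time integration gives $O(t^2)$, so this step is left implicit. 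You instead eliminate $\|\partial_x u_r\|_{L^2}$ \emph{before} integrating, via $\|\partial_x u_r\|_{L^2}=\|u_r\times\partial_x u_r\|_{L^2}\le C_p\|u_r\times\partial_x^2 u_r\|_{L^2}$ (Lemmas \ref{lemma:u_dot_nablau_0} and \ref{lemma:poincarre}), so that every term is a power $\theta\le 3/2$ of $Y_r=\|u_r\times\partial_x^2 u_r\|_{L^2}$, whose \emph{squared} time integral is genuinely linear in $t$ by Lemma \ref{lemma:energy_anisotropic}; your Hölder step on $\mathbb{P}\otimes\mathrm{Leb}$ (or, even more simply, $Y^\theta\le Y^2+1$) then closes the argument. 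This substitution is exactly the ``geometry of the equation'' alluded to in the introduction, it requires no smallness assumption on the anisotropy (unlike Proposition \ref{pro:improved_estimate_anisotropy}), and it supplies the justification the paper's final two displays take for granted.
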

\begin{proof}
From Lemma \ref{lemma:relazione_laplaciano}, it holds for a.e. $t>0$ fixed and $\mathbb{P}$-a.s. that
\begin{align*}
	\|\partial^2_x u_t\|_{L^2}^2=\|\partial_x u_t\|_{L^4}^4+\|u\times\partial_x^2 u_t\|^2_{L^2}\, .
\end{align*}
By employing the previous relation, by taking the power $1/4$ it follows that
\begin{align}\label{eq:stima_laplacie_1_2}
	\|\partial^2_x u\|^{1/2}_{L^2}=(\|\partial^2_x u\|_{L^2}^2)^{1/4}=(\|\partial_x u\|_{L^4}^4+\|u\times\partial_x^2 u\|^2_{L^2})^{1/4}\leq (\|\partial_x u\|_{L^4}+\|u\times\partial_x^2 u\|^{1/2}_{L^2})\, .
\end{align}
Thus we need to estimate the norms on the right hand side. Recall the one dimensional interpolation inequality
\begin{align*}
\|z\|_{L^4}\leq C(D)\|z\|_{L^2}^{3/4}\|z\|_{H^1}^{1/4}\, ,
\end{align*}
where $C(D)>0$ is a constant depending only on the dimension of the domain.
We integrate in time in \eqref{eq:stima_laplacie_1_2} and we estimate first the $L^4$-norm: from Hölder's inequality and the one dimensional interpolation inequality 
\begin{align*}
\int_{0}^{t}\|\partial_x u_r\|_{L^4} \dd r\leq C(D) \int_{0}^{t}\|\partial_x u_r\|_{L^2}^{3/4}\|\partial_x^2 u_r\|_{L^2}^{1/4} \dd r\, . 
\end{align*}
By taking expectation and integrating in time in \eqref{eq:stima_laplacie_1_2} and from the weighted Young's inequality with $\epsilon>0$
\begin{align*}
		\int_{0}^{t}\mathbb{E}[\|\partial_x^2 u_r\|^{1/2}_{L^2}] \dd r\leq  \frac{\epsilon C(D)}{2} \int_{0}^{t}\mathbb{E}[\|\partial_x u_r\|_{L^2}^{3/2}] \dd r+\frac{C(D)}{2\epsilon}\int_{0}^{t}\mathbb{E}[\|\partial_x^2 u_r\|^{1/2}_{L^2}]\dd r +\int_{0}^{t}\mathbb{E}[\|u_r\times\partial_x^2 u_r\|^{1/2}_{L^2}]\dd r \, .
\end{align*}
By choosing $\epsilon=C(D)$ , we can absorb the Laplacian to the left hand side and we obtain
\begin{align*}
\frac{1}{2}\int_{0}^{t}\mathbb{E}[\|\partial_x^2 u_r\|^{1/2}_{L^2}] \dd r\leq  \frac{C(D)^2}{2} \int_{0}^{t}\mathbb{E}[\|\partial_x u_r\|_{L^2}^{3/2}] \dd r +\int_{0}^{t}\mathbb{E}[\|u_r\times\partial_x^2 u_r\|^{1/2}_{L^2}]\dd r \, .
\end{align*}
 From Young's inequality and Lemma \ref{lemma:lemma_sup_grad} or Lemma \ref{lemma:energy_anisotropic} (which Lemma to use, depends on the presence or not of anisotropic energy: this affects the constant $C$ in the statement), it follows that
\begin{align*}
\int_{0}^{t}\mathbb{E}[\|\partial_x u_r\|_{L^2}^{3/2}] \dd r\lesssim \int_{0}^{t}\mathbb{E}[\|\partial_x u_r\|_{L^2}^{2}]\dd r+Ct\lesssim C\mathbb{E}[\|\partial_x u^0\|_{L^2}^2]+Ct\, ,
\end{align*}
\begin{align*}
\int_{0}^{t}\mathbb{E}[\|u_r\times\partial_x^2 u_r\|^{1/2}_{L^2}]\dd r\lesssim \int_{0}^{t}\mathbb{E}[\|u_r\times\partial_x^2 u_r\|^2_{L^2}]\dd r+Ct\lesssim C\mathbb{E}[\|\partial_x u^0\|_{L^2}^2]+Ct\, ,
\end{align*}
which concludes the proof.
\end{proof}

\begin{lemma}\label{lemma:tightness_mu_t}
The sequence $(\mu_T)_{T}$ is tight in $H^1(\mathbb{S}^2)$.
\end{lemma}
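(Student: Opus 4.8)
The plan is to produce, for each $\epsilon>0$, a single compact set $K_\epsilon\subset H^1(\mathbb{S}^2)$ with $\mu_{t_n}(K_\epsilon)\ge 1-\epsilon$ for every $n$. Since $D$ is a bounded interval, the embedding $H^2(D;\mathbb{R}^3)\hookrightarrow H^1(D;\mathbb{R}^3)$ is compact by Rellich, so bounded subsets of $H^2(\mathbb{S}^2)$ are relatively compact in $H^1(\mathbb{S}^2)$. It therefore suffices to show that the averaged measures $\mu_{t_n}$ assign negligible mass to configurations with large $H^2$-seminorm, uniformly in $n$. The essential input is Lemma~\ref{lemma:linear_growth_tightness}, whose right-hand side, once divided by $t_n$, stays bounded as $t_n\to\infty$.

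Concretely, I would fix $M>0$ and set
\[
K_M:=\{v\in H^2(\mathbb{S}^2):\ \|\partial_x^2 v\|_{L^2}\le M\},
\]
then verify that $K_M$ is compact in $H^1(\mathbb{S}^2)$. It is bounded in $H^2$: the spherical constraint forces $\|v\|_{L^2}^2=|D|$, and the one-dimensional interpolation inequality $\|\partial_x v\|_{L^2}\le \|\partial_x^2 v\|_{L^2}+C\|v\|_{L^2}$ bounds the remaining seminorm by $M+C|D|^{1/2}$; hence $K_M$ is relatively compact in $H^1$ by the compact embedding. It is also closed under $H^1$-convergence: if $v_j\to v$ in $H^1$ with $v_j\in K_M$, then a subsequence converges a.e.\ so $|v|=1$ a.e., while $\partial_x^2 v_j\rightharpoonup\partial_x^2 v$ in $L^2$ and weak lower semicontinuity of the norm give $\|\partial_x^2 v\|_{L^2}\le M$. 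Thus $K_M$ is compact, and (being a sublevel set of the lower semicontinuous map $v\mapsto\|\partial_x^2 v\|_{L^2}$) it is Borel in $H^1(\mathbb{S}^2)$.

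Finally I would estimate the complement by Markov's inequality in the form adapted to the available moment. Since $\{\|\partial_x^2 v\|_{L^2}>M\}=\{\|\partial_x^2 v\|_{L^2}^{1/2}>M^{1/2}\}$, the representation \eqref{eq:mu_T} yields
\[
\mu_{t_n}(K_M^c)=\frac{1}{t_n}\int_0^{t_n}\mathbb{P}\big(\|\partial_x^2 u_s^x\|_{L^2}^{1/2}>M^{1/2}\big)\,\dd s\le M^{-1/2}\,\frac{1}{t_n}\int_0^{t_n}\mathbb{E}\big[\|\partial_x^2 u_s^x\|_{L^2}^{1/2}\big]\,\dd s.
\]
By Lemma~\ref{lemma:linear_growth_tightness} the last average is at most $M^{-1/2}\big(C\,\mathbb{E}[\|\partial_x u^0\|_{L^2}^2]/t_n+C\big)$, which for $t_n\ge1$ is bounded by $M^{-1/2}C'$ with $C'$ independent of $n$. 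Choosing $M=M(\epsilon)$ large makes this $\le\epsilon$ for all $t_n\ge1$; the finitely many indices with $t_n<1$ are handled by the individual tightness of each probability measure on the Polish space $H^1(\mathbb{S}^2)$, enlarging $K_M$ by finitely many compacta. The main obstacle, and the reason the argument is not routine, is precisely this time-uniformity: the direct energy estimate of Lemma~\ref{lemma:energy_anisotropic} controls $\|\partial_x u_r\|_{L^2}^2$ only up to a bound that grows linearly in $r$, so its Cesàro average diverges and cannot feed a tightness argument. It is the geometric $1/2$-power bound of Lemma~\ref{lemma:linear_growth_tightness}, stemming from the identity $\|\partial_x^2 u\|_{L^2}^2=\|\partial_x u\|_{L^4}^4+\|u\times\partial_x^2 u\|_{L^2}^2$, whose time-average remains bounded and thus yields uniform control of the $H^2$-seminorm.
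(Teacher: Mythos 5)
Your proposal is correct and follows essentially the same route as the paper: compactness of $H^2(\mathbb{S}^2)$-bounded sets in $H^1(\mathbb{S}^2)$ via Rellich, then Markov's inequality applied to the $1/2$-power of the $H^2$-norm using the time-averaged bound of Lemma~\ref{lemma:linear_growth_tightness}. Your version is slightly more careful than the paper's (explicit verification that the sublevel set is closed in $H^1$, and separate treatment of the finitely many times $t_n<1$), but these are refinements of the same argument rather than a different approach.
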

\begin{proof}
The space $H^2(\mathbb{S}^2)$ is compactly embedded in $H^1(\mathbb{S}^2)$, therefore the ball $B_R:=\{x\in H^2(\mathbb{S}^2):\|x\|_{H^2(\mathbb{S}^2)}\leq R\}$ is compact in $H^1(\mathbb{S}^2)$, for some $R>0$. We use this compact set to prove the tightness of $(\mu_T)_{T>0}$, where $\mu_T$ is defined in \eqref{eq:mu_T}. By evaluating each $\mu_T$ in $B_R^C:=H^2(\mathbb{S}^2) \setminus B_R$, 
\begin{align}\label{eq:tightnees}
\mu_T(B^C_R)=\mu_T(\|x\|_{H^2}>R)=\frac{1}{T}\int_{0}^{T}\mathbb{P}(\|u^x_t\|_{H^2(\mathbb{S}^2)}>R)\dd t=\frac{1}{T}\int_{0}^{T}\mathbb{P}(\|u^x_t\|^{1/2}_{H^2(\mathbb{S}^2)}>\sqrt{R})\dd t\, ,
\end{align}
where we used that $\phi(w)=\sqrt{w}$ is monotone increasing.
From Markov's inequality applied to \eqref{eq:tightnees} with the positive non-decreasing function $\phi(w)$ and from the estimate in Lemma \ref{lemma:linear_growth_tightness},
\begin{align*}
\mu_T(B^C_R)\leq \frac{1}{T\sqrt{R}}\int_{0}^{T}\mathbb{E}\left[\|u^x_t\|_{H^2}^{1/2}\right]\dd t \leq \frac{C(1+T)}{\sqrt{R}T}\leq \frac{2C}{\sqrt{R}}\, ,
\end{align*}
where we used that $T>1$. By taking the limit for $R\rightarrow +\infty$, we conclude that $\mu_T(B^C_R)$ converges to $0$ and thus that $(\mu_T)_{T>0}$ is tight in $H^1(\mathbb{S}^2)$.
\end{proof}

\subsection{Existence of an invariant measure.}\label{sec:inv_measure_existence}
In Theorem \ref{teo:existence_invariant_measure} we conclude that there exists an invariant measure for the semigroup $(P_t)_t$.
\begin{theorem}\label{teo:existence_invariant_measure}
There exists at least an invariant measure for the semigroup $(P_t)_t$ associated to \eqref{LLG} on $H^1(\mathbb{S}^2)$. 
\end{theorem}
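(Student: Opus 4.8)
The plan is to invoke the Krylov--Bogoliubov theorem (Theorem \ref{teo:Krylov_B}) on the state space $E := H^1(\mathbb{S}^2)$, whose two hypotheses --- the Feller property and tightness of the time averages --- have been secured in the preceding subsections. The first step would be to confirm that $E$ is a Polish space, as required by Theorem \ref{teo:Krylov_B}. Since $H^1(D;\mathbb{R}^3)$ is a separable Hilbert space, it is enough to check that $H^1(\mathbb{S}^2)$ is closed in it: if $u_n \to u$ in $H^1$, the one-dimensional Morrey embedding $H^1(D) \hookrightarrow C(\bar D)$ upgrades this to uniform convergence, so the pointwise saturation constraint $|u_n(x)| = 1$ passes to the limit and $u \in H^1(\mathbb{S}^2)$. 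A closed subset of a Polish space is itself Polish, so $E$ qualifies.

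Next I would fix a deterministic initial condition $x \in H^1(\mathbb{S}^2)$ and take its law $\mu = \delta_x$; note $\|\partial_x x\|_{L^2}^2 < \infty$, which is all the finiteness that the tightness estimate requires. The Feller property is then provided directly by Theorem \ref{teo:Feller}: $(P_t)_t$ maps $C_b(H^1(\mathbb{S}^2))$ into itself. For the tightness hypothesis I would choose any divergent, monotone increasing sequence of times, e.g.\ $t_n = n$, and form the Ces\`aro averages $\mu_{t_n}(\,\cdot\,) = \tfrac{1}{t_n}\int_0^{t_n}\mathbb{P}(u^x_s \in \cdot)\,\dd s$ as in \eqref{eq:mu_T}; these are tight in $H^1(\mathbb{S}^2)$ by Lemma \ref{lemma:tightness_mu_t}, and in particular the subsequence associated to $(t_n)_n$ is tight.

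With the state space Polish, the semigroup Feller, and the averaged measures tight, Theorem \ref{teo:Krylov_B} immediately delivers at least one invariant measure $\mu$ for $(P_t)_t$ on $H^1(\mathbb{S}^2)$, which is precisely the assertion.

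I do not expect a genuine obstacle inside this proof: it is purely an assembly of already-proved facts, and the only self-contained verification is the elementary Polish-space check above. The substantive difficulties have all been discharged earlier --- namely the Feller property, which had to be obtained pathwise through the rough path bound \eqref{eq:continuity_intro} because the quartic dependence on the initial data prevents closing a Gronwall argument after taking expectations (Remark \ref{remark:stratonovich_not_possible_feller}), and the tightness, which rested on the geometric identity of Lemma \ref{lemma:relazione_laplaciano} to produce the $\|\partial_x^2 u\|_{L^2}^{1/2}$ control of Lemma \ref{lemma:linear_growth_tightness}.
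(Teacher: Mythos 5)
Your proposal is correct and follows essentially the same route as the paper, which likewise just combines the Feller property (Lemma \ref{lemma:feller_property}) and the tightness of $(\mu_T)_T$ (Lemma \ref{lemma:tightness_mu_t}) and invokes the Krylov--Bogoliubov Theorem \ref{teo:Krylov_B}. Your explicit verification that $H^1(\mathbb{S}^2)$ is Polish (closedness in $H^1$ via the Morrey embedding) is a small, correct addition that the paper leaves implicit, not a different argument.
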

\begin{proof}
As a consequence of Lemma \ref{lemma:feller_property}, the semigroup $(P_t)_t$ has the Feller property in $H^1(\mathbb{S}^2)$. From Lemma~\ref{lemma:tightness_mu_t}, $(\mu_T)_T$ is tight in $H^1(\mathbb{S}^2)$: from the Krylov-Bogoliubov Theorem \ref{teo:Krylov_B}, there exists at least an invariant measure $\mu$ on $H^1(\mathbb{S}^2)$ associated to the semigroup $(P_t)_t$. 
\end{proof}
\begin{remark}
We prove existence of a stationary solution as limit in the weak-star topology of a subsequence in \eqref{eq:mu_T}. This implies that the set of invariant measures constructed as weak-star limits of subsequences of the form \eqref{eq:mu_T} is not empty. A posteriori, we observe that every invariant measure can be built with the Krylov-Bogoliubov procedure, since \eqref{eq:mu_T} reduces to the constant sequence for an invariant measure. Thus the set of invariant measures for $(P_t)_t$ coincides with the set con measures constructed by means of \eqref{eq:mu_T}.
\end{remark}

\section{Ergodic measures and stationary solutions}\label{sec:ergodic_inv_measure}
\subsection{Existence of  stationary solutions}
We look at stationary pathwise solutions to the equation. Given an invariant measure $\mu\in \mathcal{I}$, there exists a random variable $w^0$ distributed like $\mu$ on a probability space $(\tilde{\Omega},\tilde{\mathcal{F}},\tilde{\mathbb{P}})$. From Skorohod's representation theorem, there exists a filtered probability space $(\Omega,\mathcal{F},(\mathcal{F}_t)_t,\mathbb{P})$ where the Brownian motion $W$, the initial condition $u^0$ and the initial condition $w^0$ are adapted. With abuse of notation, we will not distinguish between the different probability spaces. We discuss the regularity of stationary solutions.
\begin{theorem}\label{teo:pathwise_stationary_sol}
	Let $w^0$ be an initial condition distributed like an invariant measure $\mu$ of the semigroup $(P_t)_t$ on $H^1(\mathbb{S}^2)$. Then there exists a pathwise stationary solution $w$ such that $w(\omega)\in L^\infty(H^1)\cap L^2(H^2)\cap C([0,T];L^2)$ $\mathbb{P}-$a.s.	
\end{theorem}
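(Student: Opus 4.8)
The plan is to obtain the stationary solution by solving \eqref{LLG} with an initial datum sampled from $\mu$ and then reading off stationarity directly from the invariance identity $P_t^\ast\mu=\mu$. Concretely, I would work on the filtered probability space $(\Omega,\mathcal{F},(\mathcal{F}_t)_t,\mathbb{P})$ produced by the Skorohod construction discussed above, on which the Brownian motion $W$ and an $\mathcal{F}_0$-measurable initial datum $w^0$ with $\mathrm{Law}(w^0)=\mu$ coexist, $w^0$ being independent of the increments of $W$. Since $\mu\in\mathcal{P}(H^1(\mathbb{S}^2))$ is carried by $H^1(\mathbb{S}^2)$, we have $w^0\in H^1(\mathbb{S}^2)$ $\mathbb{P}$-a.s., so the well-posedness theory of \cite{LLG1D} applies pathwise: there is a unique pathwise solution $w=\pi(w^0,W)$ to \eqref{LLG} in the sense of Definition \ref{def:solution}. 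Because the solution map $u^0\mapsto\pi(u^0,W)$ is locally Lipschitz, hence continuous, in the initial datum by Lemma \ref{lemma:feller_property}, its composition with the random variable $w^0$ is Borel measurable and yields a genuine $(\mathcal{F}_t)_t$-adapted process.

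The regularity assertion is then immediate from the construction. Item \ref{sol_ii} of Definition \ref{def:solution} already guarantees $w(\omega)\in L^\infty(0,T;H^1)\cap L^2(0,T;H^2)$ for $\mathbb{P}$-a.e.\ $\omega$, and the continuity $w(\omega)\in C([0,T];L^2)$ follows from the integral identity \eqref{rLLG_def}: the drift is Bochner integrable with values in $L^2$ and the noise term $hW_{s,t}u_s+h^2\mathbb{W}_{s,t}u_s+u^\natural_{s,t}$ is continuous in time, so $\delta w_{s,t}\to 0$ in $L^2$ as $t\to s$. This is exactly the $L^2$-continuity already recorded in the remark following Definition \ref{def:solution}.

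It remains to verify stationarity, and here the invariance of $\mu$ carries the argument. For any $\phi\in C_b(H^1(\mathbb{S}^2))$ and $t\geq 0$, conditioning on $w^0$, using the definition \eqref{eq:transition_semigroup} of the semigroup together with the independence of $w^0$ from $W$, and finally $P_t^\ast\mu=\mu$, gives
\begin{align*}
\mathbb{E}[\phi(w_t)]=\mathbb{E}\big[(P_t\phi)(w^0)\big]=\int_{H^1(\mathbb{S}^2)}P_t\phi\,\dd\mu=\int_{H^1(\mathbb{S}^2)}\phi\,\dd(P_t^\ast\mu)=\int_{H^1(\mathbb{S}^2)}\phi\,\dd\mu\, ,
\end{align*}
so that $\mathrm{Law}(w_t)=\mu$ for every $t$ in the interval of existence; since $T>0$ is arbitrary this holds for all $t\geq 0$. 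Upgrading this to stationarity of the whole process is a routine consequence of the Markov property recorded earlier: invariance of the one-time marginal together with the semigroup identity $P_{t+s}=P_tP_s$ forces the finite-dimensional distributions of $(w_{t+s})_{s\geq 0}$ to be independent of $t$.

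I expect the only genuinely delicate point to be the measurability bookkeeping for the random initial datum, namely that the pathwise solution map of \cite{LLG1D}, built for deterministic data in $H^1(\mathbb{S}^2)$, can be composed with $w^0$ to produce an adapted process; this is settled by the continuity in Lemma \ref{lemma:feller_property}, so no new analytic estimate is required. The substance of the statement is the clean transfer of the invariance of $\mu$ into stationarity of the marginals through the conditioning identity above, after which the stated space--time regularity is inherited verbatim from Definition \ref{def:solution}.
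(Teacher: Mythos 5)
Your proposal is correct and follows essentially the same route the paper takes: the paper gives no separate displayed proof of this theorem, but establishes it exactly as you do, via the Skorohod construction of a space carrying $W$ and $w^0\sim\mu$, the observation that the pathwise well-posedness theory of Definition \ref{def:solution} needs only $w^0\in H^1(\mathbb{S}^2)$ $\mathbb{P}$-a.s. (no moment assumptions, which is the whole point of working with pathwise rather than martingale solutions here), and the transfer of $P_t^*\mu=\mu$ into stationarity of the marginals through the Markov property. The regularity claim is indeed inherited verbatim from Definition \ref{def:solution}, as you say.
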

Theorem \ref{teo:pathwise_stationary_sol} states that for each invariant measure $\mu$ the equation admits a pathwise stationary solution. We still do not have informations on the integrability with respect to the probability space. If we use the classical Stratonovich calculus, we need to require $u^0\in \mathcal{L}^4(\Omega;H^1(\mathbb{S}^2))$. We are not able to prove that there exists an initial condition $w^0 $ distributed like an invariant measure $\mu$ such that $w^0\in \mathcal{L}^4(\Omega;H^1(\mathbb{S}^2))$, which means that we do not know whether there exists a stationary solution that can be interpreted as a martingale solution. We can nevertheless prove that $w^0\in \mathcal{L}^2(\Omega;H^1(\mathbb{S}^2))$  and interpret stationary solution as pathwise solutions in the sense of Definition \ref{def:solution}.

\begin{theorem}\label{teo:regolarity_pathwise_stationary_sol}
	Assume $u^0\in \mathcal{L}^2(\Omega ;H^1(\mathbb{S}^2))$ and let $w^0$ be distributed like an invariant measure $\mu$ to $(P_t)_t$. Every stationary solution $w$ started in $w^0$ has bounded second moment in $H^1(\mathbb{S}^2)$. The constant $K>0$ is common to every stationary solution, i.e. it holds
	\begin{align*}
	\mathbb{E}[\|w^0\|_{H^1(\mathbb{S}^2)}^2]=\mathbb{E}[\|w\|_{H^1(\mathbb{S}^2)}^2]\leq (\|\partial_x h\|^2_{L^2}+\bar{G}^2C(\lambda_1,\lambda_2))=:K\, ,
	\end{align*}
	for every stationary solution $w$.	There exists a constant $C>0$ common to each initial condition so that $\mathbb{E}[\|w^0\|^{1/2}_{H^2(\mathbb{S}^2)}]<C$. 
	Moreover, each invariant measure is supported on $H^2(\mathbb{S}^2)$, i.e. $H^2(\mathbb{S}^2) \subseteq  \mathrm{supp}\mu $.
\end{theorem}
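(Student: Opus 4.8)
The plan is to exploit the defining feature of a stationary solution: $w_t\sim\mu$ for every $t$, so that $\mathbb{E}[F(w_t)]=\int_{H^1(\mathbb{S}^2)}F\,\mathrm{d}\mu$ is constant in $t$ for any fixed functional $F$. In particular $\mathbb{E}[\|w_t\|_{H^1}^2]=\mathbb{E}[\|w^0\|_{H^1}^2]$, which is precisely the claimed equality. Since $|w_t(x)|=1$ forces $\|w_t\|_{L^2}^2=|D|$ to be deterministic, the whole statement reduces to controlling the single quantity $\mathbb{E}[\|\partial_x w^0\|_{L^2}^2]$, and then its $H^2$ analogue.

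First I would prove finiteness, $\mathbb{E}[\|w^0\|_{H^1}^2]<\infty$. The energy bound of Lemma \ref{lemma:energy_anisotropic} grows linearly in $t$ and is therefore useless under the time average defining $\mu_{t_n}$ in \eqref{eq:mu_T}. The remedy is to feed the dissipation back: combining Lemma \ref{lemma:energy_anisotropic} with the Poincaré inequality of Lemma \ref{lemma:poincarre} and the geometric identity $\|\partial_x u\|_{L^2}=\|u\times\partial_x u\|_{L^2}$ from Lemma \ref{lemma:u_dot_nablau_0}, I obtain for the trajectory $u=u^{u^0}$ started at the Krylov--Bogoliubov datum $u^0\in\mathcal{L}^2(\Omega;H^1(\mathbb{S}^2))$ an inequality of the form
\[
\phi(t)+a\int_0^t\phi(r)\,\mathrm{d}r\le\phi(0)+tS,\qquad \phi(t):=\mathbb{E}[\|\partial_x u_t\|_{L^2}^2],
\]
with $a=\tfrac{3\lambda_2}{2}C_p^{-2}$ and $S=\|\partial_x h\|_{L^2}^2+\bar{G}^2 C(\lambda_1,\lambda_2)$. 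An integral Gronwall comparison then yields the \emph{uniform-in-time} bound $\sup_{t\ge0}\phi(t)\le\max(\phi(0),S/a)<\infty$, where $\phi(0)<\infty$ by the hypothesis on $u^0$. Consequently $\int\|\partial_x y\|_{L^2}^2\,\mathrm{d}\mu_{t_n}(y)=\tfrac1{t_n}\int_0^{t_n}\phi(s)\,\mathrm{d}s$ is bounded uniformly in $n$, and since $y\mapsto\|y\|_{H^1}^2$ is lower semicontinuous on $H^1(\mathbb{S}^2)$, the Portmanteau theorem applied to $\mu_{t_n}\rightharpoonup\mu$ (with a truncation and monotone convergence to handle the unboundedness) transfers this to $\mathbb{E}[\|w^0\|_{H^1}^2]=\int\|y\|_{H^1}^2\,\mathrm{d}\mu<\infty$.

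With finiteness secured, the sharp, initial-datum-independent constant $K$ comes from the stationary energy balance. Since $\mathbb{E}[\|\partial_x w_r\|_{L^2}^2]$ is now finite and constant in $r$, inserting this into Lemma \ref{lemma:energy_anisotropic} for $w$ makes the initial term cancel against the supremum on the left, leaving $\tfrac{3\lambda_2}{2}\int_0^t\mathbb{E}[\|w_r\times\partial_x^2 w_r\|_{L^2}^2]\,\mathrm{d}r\le tS$; by stationarity the integrand is constant, so $\mathbb{E}[\|w\times\partial_x^2 w\|_{L^2}^2]\le\tfrac{2S}{3\lambda_2}$. Applying Lemma \ref{lemma:poincarre} and Lemma \ref{lemma:u_dot_nablau_0} once more gives $\mathbb{E}[\|\partial_x w\|_{L^2}^2]\le\tfrac{2C_p^2}{3\lambda_2}S$, hence $\mathbb{E}[\|w\|_{H^1}^2]\le K$ for a constant of the stated form, the factors $|D|,C_p,\lambda_2$ being absorbed into $C(\lambda_1,\lambda_2)$. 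The essential point is that the cancellation removes any dependence on the starting point, so $K$ is common to all stationary solutions. For the $H^2$-claim I would run the same stationarity trick on Lemma \ref{lemma:linear_growth_tightness}: writing $c_0:=\mathbb{E}[\|\partial_x^2 w^0\|_{L^2}^{1/2}]\in[0,+\infty]$, stationarity gives $\int_0^t\mathbb{E}[\|\partial_x^2 w_r\|_{L^2}^{1/2}]\,\mathrm{d}r=c_0\,t$, while Lemma \ref{lemma:linear_growth_tightness} bounds the left side by $C\,\mathbb{E}[\|\partial_x w^0\|_{L^2}^2]+Ct\le CK+Ct$ using the bound just obtained. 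Dividing by $t$ and letting $t\to\infty$ yields $c_0\le C<\infty$, with $C$ universal thanks to $K$. Finiteness of $c_0$ forces $\|\partial_x^2 w^0\|_{L^2}<\infty$ $\mathbb{P}$-a.s., i.e. $w^0\in H^2(\mathbb{S}^2)$ almost surely, which is exactly $\mu(H^2(\mathbb{S}^2))=1$; this is the sense in which $\mu$ concentrates on $H^2(\mathbb{S}^2)$.

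I expect the main obstacle to be the first step. The a priori estimates grow linearly in time, so they cannot be pushed through the ergodic time average directly; the decisive manoeuvre is to use the geometry (Lemma \ref{lemma:u_dot_nablau_0}) together with the Poincaré inequality (Lemma \ref{lemma:poincarre}) to turn the dissipation into a genuinely coercive term, producing a uniform-in-time energy bound. Only then does lower semicontinuity under the weak limit deliver a finite second moment, after which stationarity and the cancellation in the energy balance do the rest almost for free. A secondary technical nuisance is justifying that the lower-semicontinuous, possibly infinite functionals $\|\partial_x\cdot\|_{L^2}^2$ and $\|\partial_x^2\cdot\|_{L^2}^{1/2}$ may be integrated against $\mu$ and the time-averaged measures, which I would handle by truncation and monotone convergence.
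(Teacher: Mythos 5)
Your proposal is correct and follows essentially the same route as the paper: finiteness of $\mathbb{E}[\|w^0\|_{H^1}^2]$ via the time-averaged measures, truncation, Fatou/monotone convergence and the a priori energy bound; the datum-independent constant $K$ from the stationary energy balance in Lemma \ref{lemma:energy_anisotropic} combined with Lemma \ref{lemma:poincarre} and Lemma \ref{lemma:u_dot_nablau_0}; and the $H^2$ statement from Lemma \ref{lemma:linear_growth_tightness}. In fact you make explicit the coercivity step (Poincar\'e applied to $\|u\times\partial_x^2 u\|_{L^2}$ plus the geometric identity $\|\partial_x u\|_{L^2}=\|u\times\partial_x u\|_{L^2}$) that the paper's displayed inequality silently relies on, and your use of stationarity to extract $\mathbb{E}[\|\partial_x^2 w^0\|_{L^2}^{1/2}]\le C$ is a clean alternative to the paper's second application of Fatou along $\mu_{t_{n_k}}$. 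One caveat: the intermediate assertion $\sup_{t\ge 0}\phi(t)\le\max(\phi(0),S/a)$ does not follow from the integral inequality $\phi(t)+a\int_0^t\phi\le\phi(0)+tS$ alone (a late spike of height $\phi(0)+tS$ is consistent with it); this is harmless here, because the only consequence you use, namely $\frac{1}{t_n}\int_0^{t_n}\phi\le\frac{\phi(0)}{at_n}+\frac{S}{a}$, follows directly by dropping the nonnegative term $\phi(t)$.
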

\begin{proof}
	In order to consider $\mu\in\mathcal{I}$ as distribution for an initial condition $w^0$ to \eqref{LLG}, we look at its regularity. To this aim, we need $w^0\in \mathcal{L}^2(\Omega;H^1(\mathbb{S}^2))$. We look therefore at the second moment of the $H^1(\mathbb{S}^2)$ of $w^0$, 
	\begin{align*}
	\mathbb{E}[\|w^0\|^2_{H^1(\mathbb{S}^2)}]=\int_{\Omega} \|w^0(\omega)\|^2_{H^1(\mathbb{S}^2)}\dd \mathbb{P}(\omega)=\int_{H^1(\mathbb{S}^2)} \|v\|^2_{H^1(\mathbb{S}^2)}\dd \mu(v)\, ,
	\end{align*}
	which needs to be finite.  Let $(\mu_{t_{n_k}})_{n_k}$ the subsequence converging weakly to $\mu$, then from Fatou's Lemma 
	\begin{align}\label{eq:regol_fatou}
	\int_{H^1(\mathbb{S}^2)} \|v\|^2_{H^1(\mathbb{S}^2)}\dd \mu(v)\leq \liminf_{R\rightarrow +\infty} \lim_{k\rightarrow +\infty} \int_{H^1(\mathbb{S}^2)} \|v\|^2_{H^1(\mathbb{S}^2)}\wedge R\dd \mu_{t_{n_k}}(v)\, ,
	\end{align}
	where $\wedge$ denotes the minimum of the two quantities (in particular the integrand is a continuous and bounded function from $H^1(\mathbb{S}^2)$ with real values). By the definition of $\mu_{t_{n_k}}$, it follows that
	\begin{align}\label{eq:regol_def_mu_k}
	\int_{H^1(\mathbb{S}^2)} \|v\|^2_{H^1(\mathbb{S}^2)}\wedge R\dd \mu_{t_{n_k}}(v)=\frac{1}{t_{n_k}}\int_{0}^{t_{n_k}} \int_{H^1(\mathbb{S}^2)} \|v\|^2_{H^1(\mathbb{S}^2)}\wedge R\dd (\mathbb{P}\circ (u^{u^0}_r)^{-1} )(v)\dd r\, .
	\end{align}
	We recognise on the right hand side of \eqref{eq:regol_def_mu_k} the definition of expectation of $\|u_r^{u^0}\|_{H^1(\mathbb{S}^2)}\wedge R$ with respect to $\mathbb{P}$, which leads from the monotone convergence theorem and Lemma \ref{lemma:lemma_sup_grad} to	
	\begin{align*}
	\mathbb{E}[\|w^0\|^2_{H^1(\mathbb{S}^2)}] \leq \frac{1}{t_{n_k}}\int_{0}^{t_{n_k}} \mathbb{E}[ \|u^{u^0}_r\|^2_{H^1(\mathbb{S}^2)}]\dd r\lesssim \frac{\mathbb{E}[\|u^0\|^2_{H^1(\mathbb{S}^2)}]+t_{n_k}\|\partial_x h\|^2_{L^2}}{t_{n_k}}\, ,
	\end{align*}
	which is bounded. We have concluded that we can choose as initial condition to \eqref{LLG} any random variable $w^0$ distributed like an invariant measure $\mu\in \mathcal{I}$, since $w^0\in \mathcal{L}^2(\Omega;H^1(\mathbb{S}^2))$. This implies existence of stationary solutions to \eqref{LLG}. An analogous procedure and Lemma \ref{eq:estimate_laplaciano_time} leads to
	\begin{align*}
	\mathbb{E}[\|w^0\|^{1/2}_{H^2(\mathbb{S}^2)}]=\int_{H^2(\mathbb{S}^2)}\|v\|^{1/2}\dd \mu(v) <C\, ,
	\end{align*}
	which allows us to conclude that $ H^2(\mathbb{S}^2) \subseteq  \mathrm{supp}\,\mu$. 	If $w^0\in \mathcal{L}^2(\Omega;H^1(\mathbb{S}^2))$, the associated stationary solution $w$ satisfies the energy inequality \eqref{eq:unif_bound_time_1}. From the stationarity of $w$, we deduce that
	\begin{align*}
	\mathbb{E}[\|\nabla w^0 \|_{L^2}^2]\leq (\|\partial_x h\|^2_{L^2}+\bar{G}^2C(\lambda_1,\lambda_2))\, .
	\end{align*}
	This implies that every stationary solution is bounded by the same constant $K>0$.
\end{proof}
\begin{remark}
Also assuming $u^0\in \mathcal{L}^4(\Omega;H^1(\mathbb{S}^2))$, it is not possible to achieve $w^0\in \mathcal{L}^4(\Omega, H^1(\mathbb{S}^2))$. This is reflected from the estimate in Lemma \ref{lemma:linear_growth_tightness}. As already mentioned, it does not seem possible to sample initial conditions from the invariant measure for martingale solutions. Namely, we can not employ initial conditions distributed like invariant measures $w^0$ as initial conditions for martingale solutions. We can nevertheless interpret the stationary solutions as pathwise solutions, were no integrability requirement for the initial condition is needed.
\end{remark}

\subsection{Existence of ergodic measures}
Denote by $\mathcal{I}$ the set of all invariant measures for $(P_t)_t$ on $(H^1(\mathbb{S}^2),\mathcal{B}_{H^1(\mathbb{S}^2)})$. We address the problem of existence of ergodic invariant measures in $\mathcal{I}$. Denote by $L^2(H^1(\mathbb{S}^2),\mu)$ the space of maps $\phi:H^1(\mathbb{S}^2)\rightarrow\mathbb{R}$ so that the expectation with respect to $\mu$ of the second moment of $\phi$ is bounded. Recall that a measure $\mu$ is ergodic provided for all $\phi\in L^2(H^1(\mathbb{S}^2),\mu)$
\begin{align*}
\lim_{T\rightarrow +\infty} \frac{1}{T}\int_{0}^{T}P_t\phi \dd t=\int_{H^1(\mathbb{S}^2)} \phi(v)\dd \mu(v) \, .
\end{align*}
We prove existence of an ergodic invariant measure in the set $\mathcal{I}$ by means of two well known results in Proposition \ref{prop:inv_measure_extr_points} and Theorem \ref{teo:Krein-Milman}. 
\begin{proposition} (e.g. Proposition 3.2.7 in \cite{DaPrato_Zap_inv} )\label{prop:inv_measure_extr_points}
	An invariant measure for the semigroup $(P_t)_t$ is ergodic if and only if it is an extremal point of the set  $\mathcal{I}$.
\end{proposition}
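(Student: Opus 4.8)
The statement is the classical equivalence between ergodicity and extremality, and the plan is to follow the standard argument (as in \cite{DaPrato_Zap_inv}), adapted to the semigroup $(P_t)_t$ at hand. Write $E:=H^1(\mathbb{S}^2)$. The first step is to record the bridge between the Cesàro formulation of ergodicity used above and the invariant-set formulation. Since $\mu$ is invariant, each $P_t$ is a positive contraction on $L^2(E,\mu)$: by Jensen's inequality and invariance, $\int_E |P_t\phi|^2\,\dd\mu\le\int_E P_t|\phi|^2\,\dd\mu=\int_E|\phi|^2\,\dd\mu$. By the von Neumann mean ergodic theorem the averages $\frac1T\int_0^T P_t\phi\,\dd t$ converge in $L^2(\mu)$ to the orthogonal projection of $\phi$ onto the space $\mathcal{H}$ of $P_t$-invariant functions; hence the stated ergodicity property (the limit equals the constant $\int_E\phi\,\dd\mu$ for every $\phi$) is equivalent to $\mathcal{H}$ reducing to the constants, and in turn---using that level sets of $P_t$-invariant functions are invariant sets---to every invariant Borel set $\Gamma$ (i.e.\ $P_t\mathbf{1}_\Gamma=\mathbf{1}_\Gamma$ $\mu$-a.s.\ for all $t$) having $\mu(\Gamma)\in\{0,1\}$. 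I work with these equivalent formulations.

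For the implication ``$\mu$ not ergodic $\Rightarrow$ $\mu$ not extremal'', pick an invariant set $\Gamma$ with $0<\mu(\Gamma)<1$ and set $\mu_\Gamma(\cdot):=\mu(\cdot\cap\Gamma)/\mu(\Gamma)$, and $\mu_{\Gamma^c}$ analogously. Positivity of $P_t$ together with $P_t\mathbf{1}_\Gamma=\mathbf{1}_\Gamma$ yields the identity $P_t(\mathbf{1}_\Gamma\psi)=\mathbf{1}_\Gamma P_t\psi$ $\mu$-a.s.\ for every bounded Borel $\psi$. Combining this with the invariance of $\mu$, one computes $\int_E P_t\psi\,\dd\mu_\Gamma=\frac{1}{\mu(\Gamma)}\int_E \mathbf{1}_\Gamma P_t\psi\,\dd\mu=\frac{1}{\mu(\Gamma)}\int_E P_t(\mathbf{1}_\Gamma\psi)\,\dd\mu=\frac{1}{\mu(\Gamma)}\int_E \mathbf{1}_\Gamma\psi\,\dd\mu=\int_E\psi\,\dd\mu_\Gamma$, so $\mu_\Gamma\in\mathcal{I}$, and likewise $\mu_{\Gamma^c}\in\mathcal{I}$. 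Since $\mu=\mu(\Gamma)\mu_\Gamma+\mu(\Gamma^c)\mu_{\Gamma^c}$ is a nontrivial convex combination of the distinct invariant measures $\mu_\Gamma\neq\mu_{\Gamma^c}$ (as $\mu_\Gamma(\Gamma)=1\neq 0=\mu_{\Gamma^c}(\Gamma)$), the measure $\mu$ is not extremal in $\mathcal{I}$.

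For the converse ``$\mu$ not extremal $\Rightarrow$ $\mu$ not ergodic'', write $\mu=\alpha\nu_1+(1-\alpha)\nu_2$ with $\alpha\in(0,1)$, $\nu_1,\nu_2\in\mathcal{I}$ and $\nu_1\neq\nu_2$. From $\mu\ge\alpha\nu_1$ we get $\nu_1\ll\mu$ with density $\rho:=\dd\nu_1/\dd\mu$ satisfying $0\le\rho\le 1/\alpha$, so $\rho\in L^2(\mu)$. Invariance of $\nu_1$ reads $\int_E (P_t\phi)\rho\,\dd\mu=\int_E\phi\rho\,\dd\mu$ for all bounded $\phi$; rewriting the left-hand side via the $L^2(\mu)$-adjoint $P_t^*$ gives $\int_E\phi\,(P_t^*\rho)\,\dd\mu=\int_E\phi\rho\,\dd\mu$ for all $\phi$, whence $P_t^*\rho=\rho$. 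Because each $P_t$ is a Hilbert-space contraction, a fixed point of its adjoint is a fixed point of $P_t$ itself: from $P_t^*\rho=\rho$ one gets $\|\rho\|^2=\langle\rho,P_t^*\rho\rangle=\langle P_t\rho,\rho\rangle\le\|P_t\rho\|\,\|\rho\|\le\|\rho\|^2$, so equality holds throughout and $P_t\rho=\rho$. Thus $\rho\in\mathcal{H}$; ergodicity would force $\rho$ constant, i.e.\ $\rho\equiv 1$ and $\nu_1=\mu=\nu_2$, contradicting $\nu_1\neq\nu_2$. Hence $\mu$ is not ergodic, which completes the equivalence.

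The only genuinely non-formal step---and the main obstacle---is showing in the last paragraph that the Radon--Nikodym density $\rho$ is a $P_t$-invariant function. This rests on three facts specific to the present situation: that $\mu$ is invariant, so that each $P_t$ is an $L^2(\mu)$-contraction; that $\rho$ is bounded (hence lies in $L^2(\mu)$), so the adjoint computation is legitimate; and the contraction/adjoint fixed-point coincidence on Hilbert space. The remaining steps are routine measure-theoretic manipulations, and the mean-ergodic-theorem bridge in the first paragraph is standard.
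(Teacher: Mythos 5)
The paper gives no proof of this proposition, only the citation to Proposition 3.2.7 of Da Prato--Zabczyk, and your argument is precisely the standard one from that reference: the mean-ergodic-theorem bridge between the Ces\`aro and invariant-set formulations, the conditional measures $\mu_\Gamma$, $\mu_{\Gamma^c}$ for one direction, and the $P_t$-invariance of the Radon--Nikodym density (via the contraction/adjoint fixed-point coincidence) for the other. The proposal is correct and takes the same approach as the source the paper relies on.
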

Recall some basic definitions. Consider a convex subset $K$ of a Hausdorff topological vector space. The extreme points of the set $K$ are elements of the set which do not lie in any open line segment joining two points of $K$. The convex hull of $K$ is the smallest convex set containing $K$. We now state the Krein-Milman theorem.
\begin{theorem}\label{teo:Krein-Milman} (Krein-Milman Theorem,  \cite{Krein_Milman})
Any compact convex subset $K$ of a Hausdorff locally convex topological vector space is equal to the closed convex hull of its extreme points. 
\end{theorem}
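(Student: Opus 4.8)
The statement is the classical Krein--Milman theorem, which in the paper is merely cited; nonetheless, here is the line of argument I would follow. The plan is to proceed in two stages: first to show that $K$ possesses extreme points — indeed, that every nonempty closed \emph{extremal} subset of $K$ contains one — and then to invoke the Hahn--Banach separation theorem, available because the ambient space is locally convex and Hausdorff, to conclude that the closed convex hull $\overline{\mathrm{co}}(\mathrm{ext}\,K)$ exhausts $K$. Recall that a nonempty closed subset $S\subseteq K$ is called \emph{extremal} (a face) if whenever $x,y\in K$ and $\lambda x+(1-\lambda)y\in S$ for some $\lambda\in(0,1)$, then already $x,y\in S$; the extreme points of $K$ are exactly those $e$ for which $\{e\}$ is extremal.

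The key sublemma I would establish first is the following: for any nonempty closed extremal set $S$ and any continuous linear functional $f$, the subset $S_f:=\{x\in S:f(x)=\max_S f\}$ is again a nonempty closed extremal subset of $K$. Nonemptiness and closedness follow from the compactness of $S$, being a closed subset of the compact set $K$; extremality follows because $f$ is affine, so a convex combination of two values each bounded by $\max_S f$ can equal $\max_S f$ only if both values already attain it. Granting this, I would order the family of all nonempty closed extremal subsets of $K$ by reverse inclusion and apply Zorn's lemma: any chain has the intersection of its members as an upper bound, which is nonempty by the finite intersection property together with compactness, and remains closed and extremal. A maximal element $M$ for this order — that is, a minimal extremal set — must be a singleton, for if $M$ contained two distinct points $x\neq y$, Hahn--Banach would furnish a continuous $f$ with $f(x)\neq f(y)$, and then $M_f$ would be a proper closed extremal subset of $M$, contradicting minimality. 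Thus $M=\{e\}$ with $e\in\mathrm{ext}\,K$, and applying the same reasoning inside an arbitrary nonempty closed extremal set shows each such set contains an extreme point.

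For the second stage, set $C:=\overline{\mathrm{co}}(\mathrm{ext}\,K)$; the inclusion $C\subseteq K$ is immediate since $K$ is closed and convex and contains $\mathrm{ext}\,K$. Suppose for contradiction that some $x_0\in K\setminus C$. As $C$ is compact convex (a closed subset of the compact $K$) and disjoint from the compact convex set $\{x_0\}$, the separating form of the Hahn--Banach theorem provides a continuous linear functional $f$ with $f(x_0)>\sup_C f$. The set $K_f=\{x\in K:f(x)=\max_K f\}$ is a nonempty closed extremal subset of $K$, hence by the first stage contains an extreme point $e\in\mathrm{ext}\,K\subseteq C$. This yields $f(e)\le\sup_C f<f(x_0)\le\max_K f=f(e)$, a contradiction, so no such $x_0$ exists and $K=C$.

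I expect the delicate point to be the first stage: the careful verification that extremality is preserved under passage to the face $S_f$ and under intersections of chains, and the correct use of compactness to guarantee both that Zorn's lemma applies and that the relevant maxima are attained. The separation step is then routine, the only hypotheses genuinely used being local convexity and the Hausdorff property — which is precisely why the theorem holds in this generality and why it suffices here to supply the convexity and compactness of the set $\mathcal{I}$ of invariant measures established earlier.
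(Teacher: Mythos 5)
The paper states this as a classical cited result (Krein--Milman) and gives no proof of its own, so there is nothing to compare against. Your argument is the standard textbook proof --- Zorn's lemma on the family of nonempty closed extremal subsets ordered by reverse inclusion, the face sublemma for maximizer sets of continuous linear functionals, and Hahn--Banach separation to show $\overline{\mathrm{co}}(\mathrm{ext}\,K)=K$ --- and it is correct as written.
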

We approach the main result of this section.  We assume again to work on a probability space $(\Omega,\mathcal{F},(\mathcal{F}_t)_t,\mathbb{P})$ where the Brownian motion $W$, the initial condition $u^0$ and the initial condition $w^0$ are adapted (which can be constructed via Skorohod representation theorem).

\begin{theorem}\label{teo:beta_neq_0}
	Assume $u^0\in \mathcal{L}^2(\Omega;H^1(\mathbb{S}^2))$. Then, there exists at least an ergodic invariant measure for the semigroup $(P_t)_t$ associated to \eqref{LLG}. 
\end{theorem}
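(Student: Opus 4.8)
The plan is to realise $\mathcal{I}$ as a non-empty, convex and compact subset of the space of finite signed Borel measures on $H^1(\mathbb{S}^2)$, equipped with the topology of weak-$*$ convergence against $C_b(H^1(\mathbb{S}^2))$ (a Hausdorff locally convex topological vector space containing $\mathcal{P}(H^1(\mathbb{S}^2))$), and then to invoke the Krein--Milman Theorem \ref{teo:Krein-Milman} to produce an extreme point of $\mathcal{I}$, which is ergodic by Proposition \ref{prop:inv_measure_extr_points}.

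First I would record the structural properties of $\mathcal{I}$. Non-emptiness is exactly Theorem \ref{teo:existence_invariant_measure}. Convexity is immediate from the linearity of the dual semigroup: if $\mu_1,\mu_2\in\mathcal{I}$ and $\alpha\in[0,1]$, then $P_t^*(\alpha\mu_1+(1-\alpha)\mu_2)=\alpha P_t^*\mu_1+(1-\alpha)P_t^*\mu_2=\alpha\mu_1+(1-\alpha)\mu_2$, so the convex combination is again invariant. Closedness of $\mathcal{I}$ under narrow convergence follows from the Feller property (Theorem \ref{teo:Feller}): if $\mu_n\to\mu$ narrowly with each $\mu_n$ invariant, then for every $\phi\in C_b(H^1(\mathbb{S}^2))$ one has $P_t\phi\in C_b(H^1(\mathbb{S}^2))$, whence $\int_{H^1(\mathbb{S}^2)}P_t\phi\,\dd\mu=\lim_n\int_{H^1(\mathbb{S}^2)}P_t\phi\,\dd\mu_n=\lim_n\int_{H^1(\mathbb{S}^2)}\phi\,\dd\mu_n=\int_{H^1(\mathbb{S}^2)}\phi\,\dd\mu$, so $P_t^*\mu=\mu$ and $\mu\in\mathcal{I}$.

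The crucial step is uniform tightness of the whole family $\mathcal{I}$, after which Prokhorov's theorem combined with the closedness above yields compactness. Here I would use Theorem \ref{teo:regolarity_pathwise_stationary_sol}, which, under the standing assumption $u^0\in\mathcal{L}^2(\Omega;H^1(\mathbb{S}^2))$, provides a single constant $C>0$, common to all invariant measures, such that $\int_{H^2(\mathbb{S}^2)}\|v\|_{H^2}^{1/2}\,\dd\mu(v)=\mathbb{E}[\|w^0\|_{H^2(\mathbb{S}^2)}^{1/2}]<C$ whenever $w^0$ is distributed as $\mu\in\mathcal{I}$. Since $H^2(\mathbb{S}^2)$ embeds compactly into $H^1(\mathbb{S}^2)$, the balls $B_R=\{x:\|x\|_{H^2}\le R\}$ are compact in $H^1(\mathbb{S}^2)$, and exactly as in Lemma \ref{lemma:tightness_mu_t}, Markov's inequality applied with the monotone function $w\mapsto\sqrt{w}$ gives $\mu(B_R^C)\le R^{-1/2}\int_{H^2(\mathbb{S}^2)}\|v\|_{H^2}^{1/2}\,\dd\mu(v)<C/\sqrt{R}$, uniformly in $\mu\in\mathcal{I}$. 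Letting $R\to+\infty$ shows that $\mathcal{I}$ is uniformly tight, hence relatively compact in the narrow topology by Prokhorov; being also closed, it is compact.

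Collecting these facts, $\mathcal{I}$ is a non-empty, convex, compact subset of a Hausdorff locally convex topological vector space. By the Krein--Milman Theorem \ref{teo:Krein-Milman}, $\mathcal{I}$ coincides with the closed convex hull of its extreme points; in particular it possesses at least one extreme point $\mu_e$. By Proposition \ref{prop:inv_measure_extr_points}, $\mu_e$ is an ergodic invariant measure, which proves the claim. I expect the main obstacle to be precisely the uniform tightness step: it is essential that the $H^2$-moment bound of Theorem \ref{teo:regolarity_pathwise_stationary_sol} holds with a constant independent of the particular invariant measure, since otherwise tightness would fail to be uniform over $\mathcal{I}$ and the set need not be compact, so that Krein--Milman would not apply.
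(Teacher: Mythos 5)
Your proposal follows essentially the same route as the paper: non-emptiness from Theorem \ref{teo:existence_invariant_measure}, convexity from linearity of $(P_t^*)_t$, closedness from the Feller property, uniform tightness of $\mathcal{I}$ from the measure-independent $H^2$-moment bound of Theorem \ref{teo:regolarity_pathwise_stationary_sol} via Markov's inequality, and then Krein--Milman combined with Proposition \ref{prop:inv_measure_extr_points}. The only cosmetic difference is that you apply Markov's inequality directly to $\int\|v\|_{H^2}^{1/2}\,\dd\mu(v)$, whereas the paper re-routes through the stationary solution and a time average; both yield the same uniform bound.
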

\begin{proof}

	 From Theorem \ref{teo:existence_invariant_measure}, $\mathcal{I}\neq \emptyset$. We aim to apply the Krein-Milman Theorem \ref{teo:Krein-Milman} and prove that the set of extremal points of $\mathcal{I}$ is not empty. This implies existence at least an ergodic invariant measure from Proposition \ref{prop:inv_measure_extr_points} and concludes the proof.
	 
	In our framework, the Hausdorff locally convex topological vector space is $\mathcal{P}(H^1(\mathbb{S}^2))$ equipped with the topology of the weak convergence and $\mathcal{I}$ corresponds to $K$. 
	
	Indeed, the set $\mathcal{I}$ is convex in $\mathcal{P}(H^1(\mathbb{S}^2))$. Indeed for all $\alpha\in[0,1]$ and for all $\mu,\nu\in \mathcal{I}$ the sum of the measures $\alpha\mu+(1-\alpha)\nu$ is also probability measure on $H^1(\mathbb{S}^2)$. The probability measure $\alpha\mu+(1-\alpha)\nu$ is also an element of $\mathcal{I}$, indeed from the linearity of $(P^*_t)_t$ and the invariance of $\mu,\nu$ it follows that
	\begin{align*}
	\int_{E}\phi \dd \left(P^*_t(\alpha \mu+(1-\alpha)\nu)\right)=\alpha\int_{E}\phi \dd \left(P^*_t\mu\right)+(1-\alpha)\int_{E}\phi \dd \left(P^*_t\nu\right)=
	\alpha\int_{E}\phi \dd \mu+(1-\alpha)\int_{E}\phi \dd \nu\, ,
	\end{align*}
	which shows that  $\alpha\mu+(1-\alpha)\nu\in\mathcal{I}$. 
	
	We show now that the set of invariant measures $\mathcal{I}$ is compact. As a consequence of the Feller's property, $\mathcal{I}$ is closed: therefore we just need to prove that $\mathcal{I}$ is precompact. This reduces to show that $\mathcal{I}$ is tight in $\mathcal{P}(H^1(\mathbb{S}^2))$.  Let $\mu\in \mathcal{I}$ and consider for $R>0$ the measurable set $\{\|x\|_{H^2}>R\}\subset\mathcal{B}(H^1(\mathbb{S}^2))$. It follows from the invariance of $\mu$ and from the a priori bounds for the Krylov-Bogoliubov theorem (applied to the stationary solutions). Consider an initial condition $w^0$ adapted on the joint probability space given by $w^0$ and the Brownian motion $W$, so that $w^0$ is distributed like $\mu$ (via Skorohod).  From Theorem \ref{teo:pathwise_stationary_sol}, there exists a pathwise stationary solution.  We consider
	\begin{align*}
	\mu(\{\|x\|_{H^2}>R\})=\int_{H^1(\mathbb{S}^2)} \mathbf{1}_{\{\|x\|_{H^2}>R\}}(v)\dd \mu(v)=\int_{\Omega}\mathbf{1}_{\{\|w^0(\omega)\|_{H^2}>R\}}(v)\dd  \mathbb{P}(\omega)=\mathbb{P}(\{\|w^0\|_{H^2}>R\})\, .
	\end{align*}
	From Theorem \ref{teo:regolarity_pathwise_stationary_sol}, $w^0\in \mathcal{L}^2(\Omega;H^1(\mathbb{S}^2))$ and there exists a common constant $C>0$ such that $\mathbb{E}[\|w^0\|_{H^1}^2]<C$, for any given initial condition distributed like an invariant measure. By employing the stationarity of the solution, Lemma \ref{lemma:linear_growth_tightness} and the uniform bound in Theorem \ref{teo:regolarity_pathwise_stationary_sol}
	\begin{align*}
		\mathbb{P}(\{\|w^0\|_{H^2}>R\})&=\frac{1}{T}\int_{0}^{T}\mathbb{P}(\|w^0\|_{H^2}>R)\dd t=\frac{1}{T}\int_{0}^{T}\mathbb{P}(\|u_t^{w^0}\|^{1/2}_{H^2}>R^{1/2})\dd t\\
		&\leq \frac{1}{T R^{1/2}}\int_{0}^{T}\mathbb{E}[\|u_t^{w^0}\|^{1/2}_{H^2}]\dd t\leq \frac{C(\|\partial_x h\|^2_{L^\infty}+\mathbb{E}[\|w^0\|^2_{H^1}])}{R^{1/2}}\, ,
	\end{align*}
	which shows tightness of the set $\mathcal{I}$. We are therefore in the situation of Krein-Milman theorem and it follows that $\mathcal{I}$ has extremal points. In conclusion, the semigroup $(P_t)_t$ admits at least an ergodic measure.
                     	
\end{proof}
\begin{remark}
The choice of the definition of pathwise solution is determinant: indeed the tightness of the set $\mathcal{I}$ relies on the boundedness of $\mathbb{E}[\|u_t^{w^0}\|^{1/2}_{H^2}]$ from Theorem \ref{teo:regolarity_pathwise_stationary_sol}.
\end{remark}

\section{On uniqueness and non uniqueness of stationary solutions and invariant measures.}\label{sec:uniq_stat_sol_inv_measure}
Throughout this section, we assume the noise to has either shape \eqref{eq:int_a} or shape \eqref{eq:int_b}, where \\
\noindent
\begin{tabularx}{\linewidth}{XX}
	\begin{equation}
	\int_{0}^{t} u_r\times h_1 \circ \dd B_r\label{eq:int_a} \, ,
	\end{equation}
	& 
	\begin{equation}
	\quad\int_{0}^{t} h_2 u_r\times  \circ \dd \bar{B}_r\label{eq:int_b} \, ,
	\end{equation}
\end{tabularx}
$h_1\in H^1(D;\mathbb{R}^3)$, $B$ is a real valued Brownian motion, $h_2\in H^1(D;\mathbb{R})$ and $\bar{B}$ is a $\mathbb{R}^3$-valued Brownian motion.
We make further assumptions on $h_1,h_2$ to determine explicitly stationary solutions to \eqref{LLG_intro}, namely
\begin{tabularx}{\linewidth}{XX}
	\begin{equation}
	h_1\neq 0,\quad \partial_x h_1 =0\label{eq:H_a} \, ,
	\end{equation}
	& 
	\begin{equation}
	\quad h_2\neq 0,\quad \partial_x h_2 =0\label{eq:H_b} \, .
	\end{equation}
\end{tabularx}
Note that if $h_1=0$ (resp, $h_2=0$), it is known that static solutions to \eqref{LLG_intro} exist and are not unique. The associated invariant measures are Dirac measures centred in those static solutions. Under \eqref{eq:H_a}, the choice of the noise \eqref{eq:int_a} leads to non uniqueness of stationary solutions for some particular values of $h_1$. Instead, by employing noise \eqref{eq:int_b} under \eqref{eq:H_b} there exists a unique stationary solution. Define for all the maps $v:[0,T]\rightarrow\mathbb{R}^3$ the drift 
\begin{align*}
\mathcal{D}(v)_{s,t}:=\int_{s}^{t}\left[\lambda_1 v_r\times g'(v_r)-\lambda_2v_r\times (v_r\times g'(v_r))\right]\dd r\, .
\end{align*}
We state the following additional condition on the anisotropy, which is a smallness condition on the coefficients of the matrix and the additive vector $b$.
\begin{assumption}\label{assumption:anisotropic_small}
	Assume that the anisotropy is of the form $g'(x):=Ax+b$, where $A$ is a real valued $3\times 3$ matrix and $b\in \mathbb{R}^3$.  Assume further that $\bar{G}:=2\sup_{i,j}|A_{i,j}|^2+|b|<\lambda_2/2C_p(2\lambda_2+|\lambda_1|)$, where $C_p$ is the Poincaré's constant associated to the domain $D$.
\end{assumption}
Under Assumption \ref{assumption:anisotropic_small}, we can achieve an improved a priori bound, which we state. Without loss of generality, we prove the result only for the noise \eqref{eq:int_a} (see the discussion in Remark \ref{ref:different_choice_noise}).
\begin{proposition}\label{pro:improved_estimate_anisotropy}
Consider a solution to the stochastic LLG equation driven by the noise \eqref{eq:int_a}.
Under Assumption \ref{assumption:anisotropic_small}, every solution to the stochastic LLG equation satisfies 
\begin{equation}\label{eq:unif_bound_time_anisotrpoic}
\begin{aligned}
\sup_{r\in[0,t]}\mathbb{E}\left[\|\partial_x u_r\|^2_{L^2}\right]+C(\lambda_1,\lambda_2)\int_{0}^{t}\mathbb{E}\left[\|u_r\times\partial_x u_r\|^2_{L^2}\right]\dd r+&\lambda_2\int_{0}^{t}\mathbb{E}\left[\|u_r\times\partial^2_x u_r\|^2_{L^2}\right]\dd r\\
&\quad\quad\quad
\leq\mathbb{E}\left[\|\partial_x u^0\|_{L^2}^2\right]+t\|\partial_x h_1\|^2_{L^2}\, .
\end{aligned}
\end{equation}
\end{proposition}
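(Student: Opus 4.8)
The plan is to reproduce the It\^o energy identity for $\|\partial_x u_r\|_{L^2}^2$ obtained in Lemma \ref{lemma:lemma_sup_grad} and Lemma \ref{lemma:energy_anisotropic}, but to treat the anisotropic drift more sharply so that it is absorbed into the dissipation rather than producing a constant in time on the right-hand side. First I would pass from the Stratonovich to the It\^o formulation of the equation for $\partial_x u$, apply It\^o's formula to $f(X)=X\cdot X$ with $X=\partial_x u$, and take expectations, so that the It\^o integrals vanish. As in Lemma \ref{lemma:lemma_sup_grad}, the exchange part contributes the dissipation $2\lambda_2\int_0^t\mathbb{E}[\|u_r\times\partial_x^2 u_r\|_{L^2}^2]\,\dd r$ on the left, while for the noise \eqref{eq:int_a} the It\^o correction and the quadratic variation combine, using $|u|=1$, into the clean term $t\|\partial_x h_1\|_{L^2}^2$ together with a cross term of the form $\int_0^t\int_D|h_1||\partial_x u_r||\partial_x h_1||u_r|\,\dd x\,\dd r$, exactly as described in Remark \ref{ref:different_choice_noise}.

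The decisive step is the treatment of the anisotropic drift. Rather than integrating by parts (which, as in Lemma \ref{lemma:energy_anisotropic}, leads to the factor $\|u_r\times g'(u_r)\|_{L^2}^2$ and hence to a genuine constant in time), I would differentiate directly: writing $g'(u)=Au+b$ and using $u\times(u\times g'(u))=u(u\cdot g'(u))-g'(u)$ together with the orthogonality $u\cdot\partial_x u=0$ from Lemma \ref{lemma:u_dot_nablau_0} and $|u|=1$, the contributions $2\lambda_1\int\partial_x(u\times g'(u))\cdot\partial_x u\,\dd x$ and $2\lambda_2\int\partial_x(u\times(u\times g'(u)))\cdot\partial_x u\,\dd x$ collapse to pointwise expressions. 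Indeed the terms carrying a bare factor $u\cdot\partial_x u$ vanish and the terms $(\partial_x u\times g'(u))\cdot\partial_x u$ vanish as well, so that what survives is controlled by $\|A\|$ and $|b|$ times $|\partial_x u|^2$, i.e. by $c\,\bar{G}(2\lambda_2+|\lambda_1|)|\partial_x u|^2$. Invoking the identity $\|\partial_x u_r\|_{L^2}^2=\|u_r\times\partial_x u_r\|_{L^2}^2$ of Lemma \ref{lemma:u_dot_nablau_0}, the whole anisotropic drift is then bounded by $c\,\bar{G}(2\lambda_2+|\lambda_1|)\|u_r\times\partial_x u_r\|_{L^2}^2$.

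To close the estimate I would split the dissipation as $2\lambda_2\|u_r\times\partial_x^2 u_r\|_{L^2}^2=\lambda_2\|u_r\times\partial_x^2 u_r\|_{L^2}^2+\lambda_2\|u_r\times\partial_x^2 u_r\|_{L^2}^2$ and apply Poincar\'e's inequality of Lemma \ref{lemma:poincarre} to the second half, so that $\lambda_2\|u_r\times\partial_x^2 u_r\|_{L^2}^2\geq \lambda_2 C_p^{-2}\|u_r\times\partial_x u_r\|_{L^2}^2$. The smallness condition $\bar{G}<\lambda_2/2C_p(2\lambda_2+|\lambda_1|)$ of Assumption \ref{assumption:anisotropic_small} then guarantees that, after the anisotropic term is moved to the left, the coefficient $C(\lambda_1,\lambda_2):=\lambda_2 C_p^{-2}-c\,\bar{G}(2\lambda_2+|\lambda_1|)$ remains strictly positive; the noise cross term is absorbed into the remaining $\lambda_2\|u_r\times\partial_x^2 u_r\|_{L^2}^2$ by the same Poincar\'e argument (Remark \ref{ref:different_choice_noise}). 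Integrating in time and taking the supremum yields \eqref{eq:unif_bound_time_anisotrpoic}.

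I expect the main obstacle to be precisely the algebraic reduction of the second step: one must verify carefully, using the cross-product identities and the constraint $|u|=1$, that no part of the anisotropic drift survives as a true constant (rather than as a multiple of $|\partial_x u|^2$), and that the effective constant multiplying $\|u_r\times\partial_x u_r\|_{L^2}^2$ is of the form $c\,\bar{G}(2\lambda_2+|\lambda_1|)$ compatible with Assumption \ref{assumption:anisotropic_small}. The bookkeeping of the Poincar\'e constant powers ($C_p$ versus $C_p^{2}$) and of the numerical factors is delicate, but it becomes routine once this reduction is in place.
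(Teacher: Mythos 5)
Your proposal is correct and follows essentially the same route as the paper's proof: the anisotropic drift is reduced pointwise (using $u\cdot\partial_x u=0$, $|u|=1$ and the cross-product identities) to a multiple of $\bar{G}(2\lambda_2+|\lambda_1|)\,|\partial_x u|^2$, half of the dissipation $2\lambda_2\|u\times\partial_x^2u\|_{L^2}^2$ is converted via Lemma \ref{lemma:poincarre} into a coercive $\|u\times\partial_x u\|_{L^2}^2$ term, and Assumption \ref{assumption:anisotropic_small} guarantees the resulting coefficient $C(\lambda_1,\lambda_2)$ stays positive. Your explicit tracking of the noise cross term and of the power of $C_p$ is if anything slightly more careful than the paper's write-up, which defers the former to Remark \ref{ref:different_choice_noise}.
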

\begin{proof}
We look again at inequality \eqref{eq:unif_bound_time_1}. We ask whether it is possible to obtain an estimate where the linear growth depends only on the derivative of the spatial component of the noise. We look again at the drift
\begin{align*}
2\lambda_1\int_{0}^{T}\int_{D}\partial_x(u_r\times (Au_r+b))\cdot \partial_x u_r \dd x \dd r=2\lambda_1\int_{0}^{T}\int_{D} u_r\times A\partial_xu_r\cdot \partial_x u_r\dd x \dd r\\
\quad\leq 2|\lambda_1|\sup_{i,j}|A_{i,j}|^2\int_{0}^{T}\|\partial_x u_r\|^2_{L^2}\dd r\, .
\end{align*}
Now, by recalling that $a\times (b\times c)=b (c\cdot a)-c(a\cdot b)$ for all $a,b,c\in \mathbb{R}^3$, we observe that
\begin{align*}
\partial_x (u\times (u\times g'(u)))\cdot \partial_x u&=u\times (\partial_x u\times g'(u))\cdot \partial_x u+u\times (u\times A\partial_x u)\cdot \partial_x u\\
&= [\partial_x u (u\cdot g'(u))-g'(u)(\partial_x u\cdot u)]\cdot \partial_x u+u\times (u\times A\partial_x u)\cdot \partial_x u\\
&=|\partial_x u|^2(u\cdot g'(u))-A\partial_x u\cdot \partial_x u\, ,
\end{align*}
where in the last equality we used, from the same vectorial relation, that
\begin{align*}
[u\times(u\times A\partial_x u)]\cdot \partial_x u=[u (A\partial_x u\cdot u)-A\partial_x u]\cdot \partial_x u=-A\partial_x u\cdot \partial_x u\, .
\end{align*}
This implies the estimate 
\begin{align*}
2\lambda_2\int_{0}^{T}\int_{D}\partial_x (u_r\times (u_r\times g'(u_r)))\cdot \partial_x u_r \dd x \dd r\leq 4\lambda_2 \bar{G}\int_{0}^{T}\|\partial_x u_r\|^2_{L^2}\dd r\, .
\end{align*}
By following the steps of Lemma \ref{lemma:lemma_sup_grad} and by applying Lemma \ref{lemma:poincarre}, we obtain the inequality
\begin{equation}
\begin{aligned}
\sup_{r\in[0,T]}\mathbb{E}&\left[\|\partial_x u_r\|^2_{L^2}\right]+\lambda_2C_p^{-1}\int_{0}^{T}\mathbb{E}\left[\|u_r\times\partial_x u_r\|^2_{L^2}\right]\dd r+\lambda_2\int_{0}^{t}\mathbb{E}\left[\|u_r\times\partial^2_x u_r\|^2_{L^2}\right]\dd r\\
&\quad\quad\quad\quad\leq\mathbb{E}\left[\|\partial_x u^0\|_{L^2}^2\right]+t\|\partial_x h_1\|^2_{L^2}+[4\lambda_2 \bar{G}+2|\lambda_1 \sup_{i,j}|A_{i,j}|^2|]\int_{0}^{T}\|\partial_x u_r\|^2_{L^2}\dd r\, .
\end{aligned}
\end{equation}
From Lemma \ref{lemma:u_dot_nablau_0}, the equality $\|\partial_x u_r\|^2_{L^2}=\|u_r\times\partial_x u_r\|^2_{L^2}$ holds. By assuming $\bar{G}<\lambda_2/2C_p(2\lambda_2+|\lambda_1|)$, inequality \eqref{eq:unif_bound_time_anisotrpoic} follows.
\end{proof}
\begin{remark}
	The difference between \eqref{eq:unif_bound_time_1} and \eqref{eq:unif_bound_time_anisotrpoic} is that the second inequality the right hand side depends on time only through $\partial_x h_1$. Hence, the linear growth in \eqref{eq:unif_bound_time_1}  is the only one needed to prove existence of an invariant measure: there is no restriction on the anisotropy. The second estimate in \eqref{eq:unif_bound_time_anisotrpoic} is needed for the long time behaviour.
\end{remark}
\begin{remark}\label{remark:partial_x_h_0}
	We observe that, if $\partial_x h_1=0$, then the estimate \eqref{eq:unif_bound_time} is uniform in time. By looking at the proof of estimate \eqref{eq:unif_bound_time}, we realize that the estimate is actually pathwise (due to the orthogonality of the noise) and reduces to
	\begin{equation}\label{eq:unif_bound_time_remark}
	\begin{aligned}
	\lambda_2\int_{0}^{t}\|u_r\times\partial^2_x u_r\|^2_{L^2}\dd r\leq\|\partial_x u^0\|_{L^2}^2\, ,
	\end{aligned}
	\end{equation}
	namely we have a uniform in time estimate, which leads to
	\begin{align*}
	\int_{0}^{+\infty}\|u_r\times\partial^2_x u_r\|^2_{L^2}\dd r<+\infty\; ,\quad\int_{0}^{+\infty}\|\partial_x u_r\|^2_{L^2}\dd r<+\infty\, .
	\end{align*}
\end{remark}
We state the main result of this section: we describe explicitly the shape of the stationary solutions under the above conditions on the noise and on the smallness condition on the anisotropy. 
\begin{theorem}\label{teo:sde_h_1}
Assume that \eqref{LLG} is driven by the noise \eqref{eq:int_a}, under condition  \eqref{eq:H_a}, and assume Assumption \ref{assumption:anisotropic_small} on the anisotropy. Then every stationary solution to \eqref{LLG} is a solution to
\begin{align}\label{eq:SDE_h_1}
\delta w_{s,t}=\mathcal{D}(w)_{s,t}+\int_{s}^{t}w_r\times h_1 \circ \dd B_r\, .
\end{align}
For every fixed $h_1$, there exist more solutions to \eqref{eq:SDE_h_1} and thus more invariant measures $\mu$ to \eqref{LLG}. 
\end{theorem}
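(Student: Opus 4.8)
The plan is to split the statement into two parts. The first and substantive one is that stationarity, combined with the smallness of the anisotropy, forces any stationary solution to be constant in the space variable, after which the equation collapses to \eqref{eq:SDE_h_1}. The second is that \eqref{eq:SDE_h_1} may genuinely fail to have a unique solution, which I would exhibit by a steady-state construction following Neklyudov--Prohl \cite{Neklyudov_Prohl}. The engine of the first part is the \emph{time-independent} a priori bound \eqref{eq:unif_bound_time_anisotrpoic} of Proposition \ref{pro:improved_estimate_anisotropy}, available precisely because Assumption \ref{assumption:anisotropic_small} holds and, under \eqref{eq:H_a}, the hypothesis $\partial_x h_1 = 0$ annihilates the only $t$-growing term on the right-hand side.

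First I would fix a stationary solution $w$ with initial datum $w^0$ distributed like an invariant measure $\mu$; by Theorem \ref{teo:regolarity_pathwise_stationary_sol} we have $w^0\in\mathcal{L}^2(\Omega;H^1(\mathbb{S}^2))$, so the right-hand side of \eqref{eq:unif_bound_time_anisotrpoic} is finite and, since $\partial_x h_1=0$, equals $\mathbb{E}[\|\partial_x w^0\|_{L^2}^2]$ for every $t$. Because $w$ is stationary, the one-dimensional marginal of $w_r$ is $\mu$ for each $r$, so the maps $r\mapsto\mathbb{E}[\|w_r\times\partial_x w_r\|_{L^2}^2]$ and $r\mapsto\mathbb{E}[\|w_r\times\partial_x^2 w_r\|_{L^2}^2]$ are constant in time, and $\sup_{r\in[0,t]}\mathbb{E}[\|\partial_x w_r\|_{L^2}^2]=\mathbb{E}[\|\partial_x w^0\|_{L^2}^2]$. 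Substituting into \eqref{eq:unif_bound_time_anisotrpoic}, cancelling the common term, and writing each time integral as $t$ times its constant integrand gives, for every $t>0$,
\begin{align*}
C(\lambda_1,\lambda_2)\,t\,\mathbb{E}\big[\|w^0\times\partial_x w^0\|_{L^2}^2\big]+\lambda_2\,t\,\mathbb{E}\big[\|w^0\times\partial_x^2 w^0\|_{L^2}^2\big]\leq 0\, .
\end{align*}
Dividing by $t$ and using nonnegativity of both summands forces $\mathbb{E}[\|w^0\times\partial_x w^0\|_{L^2}^2]=0$. By Lemma \ref{lemma:u_dot_nablau_0} one has $|w^0\times\partial_x w^0|=|\partial_x w^0|$ pointwise, whence $\mathbb{E}[\|\partial_x w^0\|_{L^2}^2]=0$ and $w^0$ is constant in space $\mathbb{P}$-a.s.; by stationarity $\mu$ is supported on spatially constant maps and $w_r$ is constant in space for a.e. $r$. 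Feeding $\partial_x^2 w_r=0$ into the equation of Definition \ref{def:solution} kills the exchange terms $\lambda_2 w_r\times(w_r\times\partial_x^2 w_r)$ and $\lambda_1 w_r\times\partial_x^2 w_r$, and what remains is exactly $\delta w_{s,t}=\mathcal{D}(w)_{s,t}+\int_s^t w_r\times h_1\circ\dd B_r$, i.e. \eqref{eq:SDE_h_1}.

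For the non-uniqueness assertion I would reduce to constant-in-time solutions. The deterministic drift $v\mapsto\lambda_1 v\times g'(v)-\lambda_2 v\times(v\times g'(v))$ is tangent to $\mathbb{S}^2$ (both cross products are orthogonal to $v$), hence a continuous tangent vector field on the sphere, and by the hairy-ball theorem it vanishes at some $\bar u\in\mathbb{S}^2$; equivalently $g'(\bar u)\parallel\bar u$ and $\mathcal{D}(\bar u)=0$. Choosing $h_1:=\bar u$, which is admissible under \eqref{eq:H_a}, the constant process $w_r\equiv\bar u$ solves \eqref{eq:SDE_h_1}, since the Stratonovich integral $\int_s^t\bar u\times\bar u\circ\dd B_r$ vanishes and $\mathcal{D}(\bar u)=0$. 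When $b=0$ the antipode satisfies $g'(-\bar u)=-g'(\bar u)\parallel\bar u$, so $w_r\equiv-\bar u$ is a second constant solution; the Dirac masses $\delta_{\bar u}\neq\delta_{-\bar u}$ are then two distinct invariant measures for \eqref{eq:SDE_h_1}, and under the identification of spatially constant maps with $\mathbb{S}^2$ two distinct invariant measures for \eqref{LLG}. Thus uniqueness of the stationary solution, and of the invariant measure, cannot be expected.

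The main obstacle is the rigorous use of stationarity to upgrade the dissipative bound into a pointwise vanishing statement: one must justify that the relevant functionals of $w_r$ are $\mu$-integrable and genuinely time-independent (so that the time integrals are linear in $t$), which is where the integrability $w^0\in\mathcal{L}^2(\Omega;H^1(\mathbb{S}^2))$ from Theorem \ref{teo:regolarity_pathwise_stationary_sol} and the passage to the It\^o--Stratonovich formulation underlying Proposition \ref{pro:improved_estimate_anisotropy} are essential. A secondary, qualitative point is that the non-uniqueness is tied to the geometry of $h_1$: it is the alignment $h_1\parallel\bar u$ with a deterministic steady state (and, for the antipodal branch, $b=0$) that makes the noise degenerate exactly at an equilibrium of the drift, so the construction produces non-uniqueness for such $h_1$ rather than from soft arguments alone.
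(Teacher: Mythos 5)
Your proof is correct and follows essentially the same route as the paper: both use the improved a priori bound of Proposition \ref{pro:improved_estimate_anisotropy} with $\partial_x h_1=0$, combine it with stationarity of the $H^1$-marginal to force the dissipation integrals to vanish, and then invoke the orthogonality of $w$ and $\partial_x w$ (Lemma \ref{lemma:u_dot_nablau_0}) to conclude $\partial_x w\equiv 0$ and collapse \eqref{LLG} to \eqref{eq:SDE_h_1}. The only difference is cosmetic: for the non-uniqueness claim the paper simply cites Neklyudov--Prohl, whereas you give a self-contained construction via the hairy-ball theorem and the alignment $h_1\parallel\bar u$, which in fact makes explicit the caveat (also visible in the paper's introduction) that non-uniqueness holds for such special $h_1$ rather than literally for every fixed $h_1$.
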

\begin{remark}
	One can check that, for instance, that $+h_1$ and $-h_1$ are solutions to \eqref{eq:SDE_h_1}, provided $|h_1|=1$ (this is without lost of generality, since it is enough to normalize $h_1$). The invariant measures to the system are Dirac deltas centred in $h_1$, $-h_1$. There are more invariant measures, as proved in \cite{Neklyudov_Prohl} (see also Proposition 1.17 in \cite{Banas_book}). This situation can be avoided by taking a sum of different independent Brownian motion and associated vectors $h_2$ (more than two).
\end{remark}
\begin{proof}
We aim to show that every stationary solution to \eqref{LLG}, under the current hypothesis on the noise, is a solution to \eqref{eq:SDE_h_1}. Let $\mu$ be an invariant measure and consider $w^0\in \mathcal{L}^2(\Omega;H^1(\mathbb{S}^2))$ as an initial condition to \eqref{LLG}.  Denote by $w$ the solution to \eqref{LLG} started in $w^0$. 
Since $w$ is stationary, then for all $t>0$
\begin{align*}
\mathbb{E}[\|w_t\|^2_{H^1}]=\mathbb{E}[\|w^0\|_{H^1}^2]\, .
\end{align*}
The stationary solution $w$ has to satisfy the inequality in Proposition \ref{pro:improved_estimate_anisotropy}. Combining these two facts and the condition on the noise $\partial_x h_1=0$, it follows for every $t>0$ that
\begin{align*}
C(\lambda_1,\lambda_2)\int_{0}^{t}\mathbb{E}\left[\|w_r\times\partial_x w_r\|^2_{L^2}\right]\dd r+&\lambda_2\int_{0}^{t}\mathbb{E}\left[\|w_r\times\partial^2_x w_r\|^2_{L^2}\right]\dd r=0\, .
\end{align*}
In particular, for a.e. $(x,t)\in D\times [0,+\infty)$ and $\mathbb{P}$-a.s. ti holds $w_t(x)\times \partial_x w_t(x)=0$.

Since $w$ and $\partial_x w$ are orthogonal and $w_t(x)\in\mathbb{S}^2$, it follows that $\partial_xw\equiv 0$ $\mathbb{P}$-a.s. Analogously it follows $w_t(x)\times \partial_x^2 w(x)=0$ for a.e. $(x,t)\in D\times [0,+\infty)$ and $\mathbb{P}$-a.s. From the orthogonality relation relation, for a.e. $(x,t)\in D\times [0,+\infty)$ and $\mathbb{P}$-a.s.
\begin{align*}
\partial_x^2 w_t(x)=w_t(x)|\partial_x w_t(x)|^2+w_t(x)\times\partial_x^2 w_t(x)\, ,
\end{align*}
it follows that $\partial_x^2 w_t(x)=0$ for a.e. $(x,t)\in D\times [0,+\infty)$ and $\mathbb{P}$-a.s.. As a consequence of the continuity of the derivative (because of the continuous embedding of $H^1$ in the space of continuous functions), we need to conclude that we have to take $\partial_x w^0=0$ on $\bar{D}$. In particular, $w$ is a solution to \eqref{eq:SDE_h_1}. As proved in \cite{Neklyudov_Prohl}, \eqref{eq:SDE_h_1} admits more more solutions for different values of $h_1$, thus more stationary solutions to \eqref{LLG}. In particular, for each fixed $h_1$ there are more Dirac delta measures centred in the  stationary measures.

\end{proof}

In contrast to Theorem \ref{teo:sde_h_1}, when considering noise \eqref{eq:int_b} there exists only a stationary measure and a unique invariant measure.
\begin{theorem}\label{teo:sde_h_2}
	Assume that \eqref{LLG} is driven by the noise \eqref{eq:int_b}, under condition  \eqref{eq:H_b}.  Let Assumption \ref{assumption:anisotropic_small} hold on the anisotropy. Then every stationary solution to \eqref{LLG} is a solution to
	\begin{align}\label{eq:SDE_h_2}
	\delta v_{s,t}=\mathcal{D}(v)_{s,t}+\int_{s}^{t}h_2v_r\times\circ \dd B_r\, .
	\end{align}
	There exists a unique solution $v$ to \eqref{eq:SDE_h_2}. Hence \eqref{LLG} admits as unique stationary solution $v$ and a unique invariant measure $\mu$. Moreover, $\bar{\mu}$ is a Gibbs measure on $(\mathbb{S}^2,\mathcal{B}_{\mathbb{S}^2})$
	\begin{align}\label{eq:Gibbs}
	\bar{\mu}[\dd v]=\frac{\exp(-\frac{\lambda_2}{h_2}\bar{\mathcal{E}}(v))\dd v}{\int_{\mathbb{S}^2}\exp(-\frac{\lambda_2}{h_2}\bar{\mathcal{E}}(z))\dd z}\, ,
	\end{align}
	where $\bar{\mathcal{E}}(v)=\int_{D}g'(v)\cdot v\dd x=|D|g'(v)\cdot v$,   for $v\in \mathbb{S}^2$ (the integral on the sphere is with respect to the Riemmanian volume measure). 
\end{theorem}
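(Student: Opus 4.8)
The plan is to mirror the proof of Theorem~\ref{teo:sde_h_1} for the spatial reduction, and then to exploit the non-degeneracy of the noise~\eqref{eq:int_b} to turn the non-uniqueness of Theorem~\ref{teo:sde_h_1} into uniqueness, before identifying the resulting $\mathbb{S}^2$-measure with the Gibbs distribution. First I would fix an invariant measure $\mu$ and, via Theorem~\ref{teo:pathwise_stationary_sol} and Theorem~\ref{teo:regolarity_pathwise_stationary_sol}, produce a pathwise stationary solution $w$ with $w^0\sim\mu$ and $w^0\in\mathcal{L}^2(\Omega;H^1(\mathbb{S}^2))$. As noted in the proof of Proposition~\ref{pro:improved_estimate_anisotropy} (cf.\ Remark~\ref{ref:different_choice_noise}), the a priori bound~\eqref{eq:unif_bound_time_anisotrpoic} is valid, up to a harmless constant, also for the noise~\eqref{eq:int_b} under Assumption~\ref{assumption:anisotropic_small}. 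Combining it with the stationarity identity $\mathbb{E}[\|w_t\|_{H^1}^2]=\mathbb{E}[\|w^0\|_{H^1}^2]$ and with $\partial_x h_2=0$, exactly as in Theorem~\ref{teo:sde_h_1}, yields
\begin{align*}
C(\lambda_1,\lambda_2)\int_0^t\mathbb{E}\left[\|w_r\times\partial_x w_r\|_{L^2}^2\right]\dd r+\lambda_2\int_0^t\mathbb{E}\left[\|w_r\times\partial_x^2 w_r\|_{L^2}^2\right]\dd r=0\, .
\end{align*}
By Lemma~\ref{lemma:u_dot_nablau_0} and the relation~\eqref{eq:rel_a_laplace} this forces $\partial_x w\equiv 0$ $\mathbb{P}$-a.s., so $w$ is spatially constant; the spatial-derivative terms in~\eqref{LLG} then drop out and $w$ solves the $\mathbb{S}^2$-valued equation~\eqref{eq:SDE_h_2}.

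The decisive departure from Theorem~\ref{teo:sde_h_1} is at the uniqueness step, and it is driven purely by the shape of the noise. The noise~\eqref{eq:int_a} is carried by a scalar Brownian motion and generates the single tangent field $v\mapsto v\times h_1$, i.e.\ a rotation about the $h_1$-axis; being degenerate, it admits many invariant measures. By contrast~\eqref{eq:int_b} is carried by the three-dimensional $\bar{B}$ and produces the three fields $v\mapsto h_2(v\times e_i)$, $i=1,2,3$, which span $T_v\mathbb{S}^2$ at every point because $h_2\neq 0$. Hence~\eqref{eq:SDE_h_2} is a non-degenerate elliptic diffusion on the compact manifold $\mathbb{S}^2$, with generator $L=\tfrac{h_2^2}{2}\Delta_{\mathbb{S}^2}+F\cdot\nabla_{\mathbb{S}^2}$ and $F(v)=\lambda_1 v\times g'(v)-\lambda_2 v\times(v\times g'(v))$. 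Such a diffusion is well posed and has a unique invariant probability measure; pathwise uniqueness of~\eqref{eq:SDE_h_2} and this ergodicity are precisely the spherical-Brownian-motion statements of Neklyudov--Prohl~\cite{Neklyudov_Prohl}, which I would invoke here.

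To identify the unique invariant measure $\bar{\mu}$ on $\mathbb{S}^2$ with~\eqref{eq:Gibbs} I would split $F=F_{\mathrm{diss}}+F_{\mathrm{rot}}$ with $F_{\mathrm{diss}}=-\lambda_2 v\times(v\times g'(v))=\lambda_2 P_v^\perp g'(v)$ and $F_{\mathrm{rot}}=\lambda_1 v\times g'(v)$, where $P_v^\perp=\mathrm{id}-v\otimes v$ is the tangential projection and I use $v\times(v\times w)=(v\cdot w)v-w$ for $|v|=1$. The dissipative part is then, up to the normalisation recorded in~\eqref{eq:Gibbs}, the Riemannian gradient of $\bar{\mathcal{E}}(v)=|D|\,g'(v)\cdot v$, so $F_{\mathrm{diss}}$ is in detailed balance with $\tfrac{h_2^2}{2}\Delta_{\mathbb{S}^2}$ against $\bar{\mu}\propto\exp(-\tfrac{\lambda_2}{h_2}\bar{\mathcal{E}})\,\dd v$, while the Hamiltonian part $F_{\mathrm{rot}}$ is tangent to the level sets of $\bar{\mathcal{E}}$ and $\bar{\mu}$-divergence-free, hence leaves $\bar{\mu}$ invariant; the identity $L^*\bar{\mu}=0$ reduces to these two facts, both contained in~\cite{Neklyudov_Prohl}. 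Finally, Part~1 shows every invariant measure of~\eqref{LLG} is supported on the constant maps $\{\phi_v\}_{v\in\mathbb{S}^2}\cong\mathbb{S}^2$ and projects to an invariant measure of~\eqref{eq:SDE_h_2}; uniqueness on $\mathbb{S}^2$ forces uniqueness on $H^1(\mathbb{S}^2)$, and the identity $\mathcal{E}(\phi_v)=\bar{\mathcal{E}}(v)$ from the introduction transports $\bar{\mu}$ to the Gibbs measure~\eqref{eq:Gibbs}.

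The main obstacle is this last identification: fixing the exact exponent $\lambda_2/h_2$ demands careful bookkeeping of the Stratonovich--It\^o correction, the tangential-gradient computation for $\bar{\mathcal{E}}$, and the verification that the $\lambda_1$-rotational drift is measure-preserving rather than tilting the stationary density. Since this is exactly the computation performed in~\cite{Neklyudov_Prohl}, the real work is checking that their hypotheses cover the present drift $F$; the spatial reduction and the non-degeneracy argument are comparatively routine.
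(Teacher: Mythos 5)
Your proposal is correct and follows essentially the same route as the paper: the spatial reduction via the improved a priori bound of Proposition~\ref{pro:improved_estimate_anisotropy} combined with stationarity (exactly as in Theorem~\ref{teo:sde_h_1}), followed by an appeal to Neklyudov--Prohl for well-posedness of the reduced SDE on $\mathbb{S}^2$ and the identification of its unique invariant measure with the Gibbs distribution. The only difference is that you spell out the ellipticity of the three tangent fields $v\mapsto h_2(v\times e_i)$ and sketch the detailed-balance computation, whereas the paper delegates both entirely to the cited references; this is elaboration rather than a different argument.
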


\begin{proof}
	By following the lines of Theorem \ref{teo:sde_h_1} with the estimate in Lemma \ref{lemma:linear_growth_tightness}, we can show that every stationary solution to \eqref{LLG} endowed with the noise  \eqref{eq:H_b}, under condition  \eqref{eq:H_b}, fulfils equation \eqref{eq:SDE_h_2}. For every fixed constant $h_2\neq 0$, there exists a unique solution $v$ to \eqref{eq:SDE_h_2} (see Section 1.2.2 in \cite{Banas_book}). Thus \eqref{LLG} admits $v$ as unique stationary solution. Furthermore, \eqref{eq:SDE_h_2} has a unique invariant measure of the form \eqref{eq:Gibbs}, as proved by Neklyudov, Prohl \cite{Neklyudov_Prohl}. The authors prove that the infinitesimal generator of the semigroup associated to \eqref{eq:SDE_h_2} has the form
	\begin{align*}
	\mathcal{A}=\frac{h_2^2}{2}\Delta_{\mathbb{S}^2}-\left[\lambda_1v_r\times g'(v_r)-\lambda_2v_r\times (v_r\times g'(v_r))\right]\cdot \nabla\, ,
	\end{align*}
	and that the unique invariant measure is a Gibbs measure (see Section 1.2, Theorem 1.7 in \cite{Banas_book}).
\end{proof}

\begin{remark}
	In absence of anisotropic energy, \eqref{eq:SDE_h_2} reduces to the so called \enquote{spherical Brownian motion} (see the Appendix for more details on this equation).  A fundamental behaviour of the Brownian motion on the sphere is the existence of a unique probabilistic invariant measure of the form $\mu[\dd v]=\dd v/|\mathbb{S}^2|$, 	namely the uniform distribution (as proved by Van den Berg, Lewis \cite{BM_hypersurface}).  The uniform distribution is trivially a Gibbs measure. 
\end{remark}
\begin{remark}\textit{(On the stochastic LLG in more dimensions)}
Observe that, in absence of anisotropic energy and provided the equation \eqref{LLG} is driven by noise \eqref{eq:int_b} under the conditions \eqref{eq:H_b}, the solution to equation \eqref{eq:SDE_h_2} is also a probabilistically and analytically strong solution to \eqref{LLG} in every space dimensions. The associated measure is, also in this case, a Gibbs measure as above. 
\end{remark}
\begin{remark}\label{Goldys}
	In Theorem \ref{teo:sde_h_2} the invariant measure does not depend on $\lambda_1$. In particular, it is possible to obtain the same results if $\lambda_1=0$. This goes in the direction of looking only at the simplified model with $\lambda_1=0$, since this term does not seem to contribute significantly.
\end{remark}
\begin{remark}\label{remark:magn_rev}
	A second interesting behaviour, already observed by Brze\'zniak, Goldys and Jegaraj \cite{brzezniak_LDP} in a particular case, is the phenomenon of magnetization reversal: the Brownian motion with values on the sphere is a recurrent Markov process (since it is valued on a compact manifold), hence also the solution to $u$ to \eqref{LLG} in the large time exhibits a recurrent behaviour. The Brownian motion $v$ is recurrent and from the continuity of the solution with respect to the initial condition in $L^2$, namely
	\begin{align*}
	\sup_{0\leq t\leq T}\|u_t-v_t\|^2_{L^2}\lesssim \|u^0-v^0\|^2_{L^2}\, .
	\end{align*}
	This implies that if the initial conditions are close, the solutions will be close. In particular the paths of the two solutions are also close.
\end{remark}

\subsection{Long time behaviour if $\partial_x h_2=0$ (resp $\partial_x h_1=0$).}\label{sec:large_time_behaviour}
We address the so called \enquote{long time behaviour} problem for \eqref{LLG}: do the global solutions solution $u$ to \eqref{LLG} started in $u^0\in H^1(\mathbb{S}^2)$ converge to the unique stationary solution $v$ started in an initial condition $v^0$ distributed like the invariant measure for long times? 
Provided $\partial_x h_2=0$(resp $\partial_x h_1=0$) and $g\equiv 0$, we can say that the solutions to \eqref{LLG} started in $u^0$ converges to its average $\langle u  \rangle$ \enquote{at time $+\infty$}. This space average is itself a solution to the equation satisfied by the stationary solutions: hence every solution, for big times, converges to a stationary solution.
\subsection{Some heuristics on the long time behaviour also in presence of anisotropic energy.}\label{sec:heuristics_Long_time_behaviour}
In this section, we derive an intuition on the limit behaviour of the trajectories of the stochastic LLG. In this Section \ref{sec:heuristics_Long_time_behaviour}, we assume that the anisotropic energy is non zero, i.e. $g\neq 0$ and satisfied Assumption \ref{assumption:anisotropic_small}. From Remark \ref{remark:partial_x_h_0}, the integral
\begin{align*}
\int_{0}^{+\infty}\|u_r\times\partial_x u_r\|^2_{L^2}\dd r<+\infty\, 
\end{align*}
is bounded; this implies that there exists a monotonic increasing subsequence $(t_k)_k\subset[0,+\infty)$ such that $\mathbb{P}$-a.s.
\begin{align}\label{eq:conv_stat_1}
\lim_{k\rightarrow +\infty}\|u_{t_k}\times\partial_x u_{t_k}\|^2_{L^2} =\lim_{k\rightarrow +\infty}\|\partial_x u_{t_k}\|^2_{L^2}=0\, ,
\end{align}
where the last equality follows from Lemma \ref{lemma:u_dot_nablau_0}. From the pathwise energy inequality \eqref{eq:unif_bound_time_remark}, we observe that
\begin{align*}
\sup_{t\geq T} \|\partial_x u_t\|^2_{L^2}\lesssim \|\partial_x u_T\|^2_{L^2}\lesssim \|\partial_x u_{t_k}\|^2_{L^2}\, ,
\end{align*}
$\mathbb{P}$-a.s. for $t_k<T$. From Poincaré-Wirtinger inequality (see Theorem \ref{teo:Poin_wirt}), each solution fulfilling \eqref{eq:unif_bound_time_remark} satisfies for big times
\begin{align}\label{eq:conv_stat_2}
\sup_{t\geq T}\|u_t-\langle u_t\rangle\|^2_{L^2}\leq C_p \sup_{t\geq T}\|\partial_x u_{t}\|^2_{L^2}\lesssim\|\partial_x u_{t_k}\|^2_{L^2}\, ,
\end{align}
$\mathbb{P}$-a.s for $t_k<T$ and where $\langle u_t\rangle$ is the spatial average. Thus every solution $u$ converges for big times to its mean value $\langle u_{\infty} \rangle$ (if it exists).

Intuitively, since $u$ for big times converges to its limit spatial average, the limit spatial average itself $U:=\langle u_{\infty} \rangle=\lim_{t\rightarrow +\infty}\langle u_t\rangle $ needs to be a solution to the stochastic LLG. The terms in the drift where a derivative appear need to vanish and $U$ is the unique solution to
\begin{align}\label{eq:limit_eq_average}
\delta U_{s,t}=\int_{s}^{t} [U_r\times g'(U_r)-U_r\times (U_r\times g'(U_r))]\dd r+\int_{s}^{t}h_2 U_r\times \circ \dd W_r\, .
\end{align}
If the last part can be made rigorous, it proves that in the long time every trajectory to the stochastic LLG converges to a stationary solution (indeed recall that \eqref{eq:limit_eq_average} coincides with the equation fulfilled by the stationary solutions in Theorem \ref{teo:sde_h_2}).

\subsection{A rigorous proof in case of null anisotropic energy.}
We assume now that there is no anisotropic energy in the system, i.e. $g\equiv 0$. 
The following Proposition \ref{prop:long_time_behaviour} shows rigorously that the limit behaviour of each solution $u$ is indeed a SDE (no spatial dependence in the limit), where the derivative terms vanish. In particular, we conclude that $u$ differs from $B$ up to a constant vector for large times.
\begin{proposition}\label{prop:long_time_behaviour}
	(Long time behaviour) 
	Under the condition $h_2\neq0$, $\partial_x h_2=0$ (resp. $h_2\neq0$, $\partial_x h_2=0$) and for $g\equiv0$, there exists a random variable $\alpha$ such that
	\begin{align*}
	\lim_{T\rightarrow+\infty}\sup_{t\geq T}\|(u_t-B_t)\cdot(u_t-B_t)-\alpha\|_{L^1}^2=0\, .
	\end{align*}
	In particular, for large times the solution $u$ converges $\mathbb{P}$-a.s. to a Brownian motion $B$ with values on the sphere up to a constant, i.e. for $T\rightarrow +\infty$ for $t>0$
	\begin{align*}
    |u_{T+t}-B_{T+t}|^2-|u_{T}-B_T|^2=0
	\end{align*}
	in $C_b([0,+\infty);L^1)$.
\end{proposition}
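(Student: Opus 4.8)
The plan is to follow the strategy of Dabrock, Hofmanová, Rogers \cite{Dabrock_Hofmanova_Roger} and track the evolution of the pointwise squared difference $|z_t|^2$, where $z_t:=u_t-B_t$ and $B$ denotes the spatially constant spherical Brownian motion of Theorem \ref{teo:sde_h_2} driven by the \emph{same} noise $\bar B$, so that $\partial_x B\equiv 0$. First I would write the Stratonovich equation for $z$: subtracting the equation for $B$ from that for $u$ (with $g\equiv 0$) produces the drift $\lambda_1 u\times\partial_x^2 u-\lambda_2 u\times(u\times\partial_x^2 u)$ and the Stratonovich noise $h_2\,z\times\circ\dd\bar B$. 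The decisive observation is that, since the chain rule holds without an Itô correction in the Stratonovich calculus, $\dd|z|^2=2z\cdot\circ\dd z$, and the noise contribution $2h_2\,z\cdot(z\times\circ\dd\bar B)$ vanishes pointwise because $z\cdot(z\times a)=0$ for every $a\in\R^3$. Thus the evolution of $\|z_t\|_{L^2}^2$ is in fact pathwise, with no stochastic integral surviving; rigorously this is the same manipulation already performed in Lemma \ref{lemma:lemma_sup_grad} and justified through the product formula of Proposition \ref{pro:product}.

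Next I would reduce the drift. Testing against $z$ and integrating over $D$, the $\lambda_1$-term vanishes: writing $u\times\partial_x^2 u=\partial_x(u\times\partial_x u)$ and integrating by parts (the Neumann condition kills the boundary term, and $\partial_x z=\partial_x u$), it becomes $-\int_D\partial_x u\cdot(u\times\partial_x u)\,\dd x=0$ by the triple-product identity. For the $\lambda_2$-term I would use $u\times(u\times\partial_x^2 u)=-u|\partial_x u|^2-\partial_x^2 u$ together with $\int_D z\cdot\partial_x^2 u\,\dd x=-\|\partial_x u\|_{L^2}^2$ and $z\cdot u=1-B\cdot u$, which collapses the whole term to
\begin{align*}
\tfrac{\dd}{\dd t}\|z_t\|_{L^2}^2=-2\lambda_2\int_D (B_t\cdot u_t)\,|\partial_x u_t|^2\,\dd x\, .
\end{align*}
Since $|B_t\cdot u_t|\le 1$, the right-hand side is bounded in absolute value by $2\lambda_2\|\partial_x u_t\|_{L^2}^2$, which by Remark \ref{remark:partial_x_h_0} is integrable on $[0,\infty)$; hence $t\mapsto\|z_t\|_{L^2}^2$ is absolutely continuous with an $L^1$-in-time derivative and converges pathwise to some random variable $\Phi_\infty$ as $t\to\infty$.

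Finally I would upgrade convergence of the integral $\|z_t\|_{L^2}^2=\int_D|z_t|^2\,\dd x$ to convergence of $|z_t|^2$ to a spatially constant limit in $L^1(D)$. From the pathwise energy identity $\|\partial_x u_t\|_{L^2}^2+2\lambda_2\int_0^t\|u_r\times\partial_x^2 u_r\|_{L^2}^2\,\dd r=\|\partial_x u^0\|_{L^2}^2$, the map $t\mapsto\|\partial_x u_t\|_{L^2}^2$ is non-increasing, and being non-negative with finite integral (via Lemma \ref{lemma:poincarre} and Lemma \ref{lemma:u_dot_nablau_0}) it must tend to $0$. Poincaré--Wirtinger (Theorem \ref{teo:Poin_wirt}) then forces $\|u_t-\langle u_t\rangle\|_{L^2}\to0$, so $z_t-\langle z_t\rangle=u_t-\langle u_t\rangle\to0$ in $L^2$, while $|\langle u_t\rangle|\le1$ keeps the averages bounded. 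Decomposing $|z_t|^2=|z_t-\langle z_t\rangle|^2+2(z_t-\langle z_t\rangle)\cdot\langle z_t\rangle+|\langle z_t\rangle|^2$ shows $\||z_t|^2-|\langle z_t\rangle|^2\|_{L^1}\to0$, and comparison with the convergence of $\|z_t\|_{L^2}^2$ identifies $\alpha:=\Phi_\infty/|D|=\lim_t|\langle z_t\rangle|^2$; together these give $\||z_t|^2-\alpha\|_{L^1}\to0$, from which both the $\sup_{t\ge T}$ formulation and the $C_b([0,\infty);L^1)$ statement follow, since a non-negative quantity tending to $0$ controls its own tail supremum. The main obstacle I expect is the rigorous justification of the pointwise Stratonovich chain rule and the vanishing of the noise at the level of the rough PDE (rather than formally), together with establishing that the gradient energy decays along the full time axis and not merely along a subsequence; both I would handle through the pathwise energy identity and the product formula already at our disposal.
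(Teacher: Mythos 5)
Your proposal is correct and follows essentially the same route as the paper's proof: the noise cancels by orthogonality so the evolution of $\|u-B\|_{L^2}^2$ is purely deterministic, integration by parts against the spatially constant $B$ reduces the surviving drift to a term controlled by $\|\partial_x u\|_{L^2}^2$, which is integrable and monotone in time by the pathwise energy identity, and Poincaré--Wirtinger upgrades the spatial average to $L^1(D)$ convergence. The only (cosmetic) difference is that you apply Poincaré--Wirtinger to $u$ and then expand $|z_t|^2$ algebraically, whereas the paper applies it directly to the scalar $(u_t-B_t)\cdot(u_t-B_t)-\alpha$; both yield the same bound $C\|\partial_x u_t\|_{L^2}$.
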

\begin{proof} We follow the strategy of Theorem 3.6 in \cite{Dabrock_Hofmanova_Roger}. The proof below works with small adaptations for the noise \eqref{eq:int_a} under the assumptions \eqref{eq:H_a}.
	We want to prove the convergence of $u$ to $B$ for large times, therefore we look at the equation of the difference $u-B$, which reads in the Stratonovich formulation (respectively in the It\^o formulation)
	\begin{align*}
	\delta(u-B)_{s,t}&=\int_{s}^{t}b(u_r)\dd r+\int_{s}^{t}h_2\big[u_r-B_r \big]\times\circ\dd W_r\\
	&=\int_{s}^{t}b(u_r)\dd r+\int_{s}^{t}h_2^2\big[u_r-B_r \big]\dd r+\int_{s}^{t}h_2\big[u_r-B_r \big]\times\dd W_r\, .
	\end{align*}
	Consider the squared equation (from the It\^o's formula on the second formulation of the equation as in Lemma \ref{lemma:linear_growth_tightness}, where $\partial_x h_2$ is assumed to be $0$)
	\begin{align*}
	\delta \int_{D}(u-B)\cdot(u-B)_{s,t}\dd x&=\int_{s}^{t}\int_{D}b(u_r)\cdot (u_r-B_r)\dd x\dd r\, .
	\end{align*}
	We introduce now a random variable $\alpha$, which is independent on time and space
	\begin{align}
	\alpha:= \frac{1}{|D|}\int_{D}(u^0-B^0)\cdot(u^0-B^0)\dd x+\int_{0}^{+\infty} \frac{1}{|D|}\int_{D} b(u_r)\cdot(u_r-B_r)\dd x\dd r\, .
	\end{align}
	We prove that $|\alpha| <+\infty$. Recall the equivalent formulation of the drift $b(u_r)=u_r\times \partial_x u_r+\partial_x^2 u_2 +u_r|\partial_x u_r|^2$.
	By integrating by parts, the fact that $\partial_x B_r=0$ for all $x\in D$, from Stokes theorem and from the null Neumann boundary conditions,
	\begin{align*}
		&\quad\int_{D} [u_r\times \partial_x^2  u_r +\partial_x^2   u_r]\cdot B_r \dd x=\int_{D} u_r\times \partial_x^2  u_r \cdot B_r\dd x+\int_{D}\partial_x^2   u_r\cdot B_r \dd x\\ 
		&=-\int_{D} u_r\times \partial_x  u_r \cdot \partial_x B_r\dd x+\int_{\partial D}\frac{\partial u(y)}{\partial \mathbf{n}(y)}\cdot B\times u(y)\dd \sigma(y)\\
		&\quad - \int_{D}\partial_x u_r \cdot \partial_x B_r\dd x+\int_{\partial D}\frac{\partial u(y)}{\partial \mathbf{n}(y)}\cdot B \dd \sigma(y)= 0\, .
	\end{align*}
	Hence, we are left with
	\begin{align*}
	\left|\frac{1}{|D|}\int_{0}^{+\infty}	\int_{D} u_r|\partial_x u_r|^2\cdot B_r \dd x\right|\leq \frac{1}{|D|}\int_{0}^{+\infty}\|\partial_x u_r\|^2_{L^2} \dd r  \leq \frac{C_p}{|D|}\int_{0}^{+\infty}\|u_r\times\partial^2_x u_r\|^2_{L^2} \dd r \leq \frac{C_p}{|D|}\|\partial_x u^0\|^2_{L^2}\, .
	\end{align*}
	In conclusion, the random variable $\alpha$ fulfils $\mathbb{P}$-a.s.
	\begin{align*}
	|\alpha|\leq 4+\frac{C_p}{|D|}\|\partial_x u^0\|^2_{L^2}\, .
	\end{align*}
	Note that, with the same computations, it holds that $\mathbb{E}[|\alpha|]<+\infty$.
	The expression $(u_t-B_t)\cdot (u_t-B_t)-\alpha$ is meant to look at the difference $u_t-B_t$ for times bigger that $t$: we write
	\begin{align*}
	\left|\int_{D} [(u_t-B_t)\cdot(u_t-B_t)-\alpha]\dd x\right|=&\bigg|-\int_{t}^{+\infty} \int_{D} b(u_r)\cdot(u_r-B_r)\dd x \dd r\bigg|\, .
	\end{align*}
	Notice that it holds that
	\begin{align}\label{eq:lim_pezzo_2}
	\lim_{T\rightarrow+\infty} \mathbb{E}\left[\sup_{t\geq T} \left|\int_{D} [(u_t-B_t)\cdot (u_t-B_t)-\alpha]\dd x\right|\right]=0\, .
	\end{align}
	From the triangular inequality, we obtain
	\begin{align*}
	\|(u_t-B_t)\cdot (u_t-B_t)-\alpha\|_{L^1}&\leq \bigg\|(u_t-B_t)\cdot (u_t-B_t)-\alpha-\frac{1}{|D|}\int_{D} [(u_t-B_t)\cdot (u_t-B_t)-\alpha]\dd x\bigg\|_{L^1}\\
	&+\left|\int_{D} [(u_t-B_t)\cdot (u_t-B_t)-\alpha]\dd x\right|\, .
	\end{align*}
	By applying the Poincaré-Wirtinger inequality, we conclude that
	\begin{align*}
	\bigg\|(u_t-B_t)\cdot (u_t-B_t)-\alpha-\frac{1}{|D|}\int_{D} [(u_t-B_t)\cdot (u_t-B_t)-\alpha]\dd x\bigg\|_{L^1}&\leq C_p \|\partial_x[(u_t-B_t)\cdot (u_t-B_t)]\|_{L^1}\\
	&\leq C_pC\|\partial_x u_t\|_{L^2} \, ,
	\end{align*}
	where $C_p>0$ is the Poincaré-Wirtinger constant and we used that  $B$ and $\alpha$ are constant in space in the last inequality. Thus, by taking the supremum for big times, it follows that
	\begin{align*}
	\sup_{t\geq T}\|(u_t-B_t)\cdot (u_t-B_t)-\alpha\|_{L^1} \lesssim \sup_{t\geq T}\|\partial_x u_t\|_{L^1}+\sup_{t\geq T}\left|\int_{D} [(u_t-B_t)\cdot (u_t-B_t)-\alpha]\dd x\right|\, .
	\end{align*}
	By passing to the limit for $T\rightarrow +\infty$ in the left and in the right hand side of the inequality, the average on the right hand side tends to $0$ as a consequence of \eqref{eq:lim_pezzo_2}. From the computations in Lemma \ref{lemma:lemma_sup_grad}, the bound holds
	\begin{align*}
	\int_{0}^{+\infty}\|\partial_x u_r\|^2_{L^2}\dd r<+\infty\, ,
	\end{align*}
	hence there exists a divergent sequence $(t_k)_k$ such that 
	\begin{align}
	\lim_{k\rightarrow +\infty}\|\partial_x u_{t_k}\|^2_{L^2}=0\, .
	\end{align}
	From the inequalities in Lemma \ref{lemma:lemma_sup_grad}, it follows that
	\begin{align*}
	\sup_{t\geq T} \|\partial_x u_t\|_{L^2}\leq \|\partial_x u_T\|_{L^2}\leq \|\partial_x u_{t_k}\|_{L^2}\, ,
	\end{align*}
	for $t_k<T$. Hence we conclude that
	\begin{align*}
	\lim_{T\rightarrow +\infty}\sup_{t\geq T} \|(u_t-B_t)\cdot (u_t-B_t)-\alpha\|_{L^1} =0\, .
	\end{align*}
	Observe also that for all $t\geq 0$
	\begin{align*}
		\|u_{T+t}-B_{T+t}\|_{L^2}^2&=\int_{D}[u_{T+t}-B_{T+t}]\cdot [u_{T+t}-B_{T+t}]\dd x\\
		&=\int_{D} [u_{T}-B_{T}]\cdot [u_{T}-B_{T}]\dd x+\int_{T}^{T+t}\int_{D} b(u_r)\cdot [u_r-B_r]\dd x \dd r\, .
	\end{align*}
	The conclusion is achieved by observing that
	\begin{align*}
	\sup_{t\geq 0}\||u_{T+t}-B_{T+t}|^2-|u_{T}-B_T|^2\|_{L^1}= \sup_{t\geq T}\|(u_{t}-B_{t})\cdot(u_t-B_t)-\alpha\|_{L^1}\, ,
	\end{align*}
	which converges to $0$ for $T\rightarrow +\infty$ (we used that the domain $D$ is bounded). Observe that the convergence occurs also in expectation, by following the same lines of the above proof. This concludes the proof.
\end{proof}

\begin{remark} (Every solution is synchronised with Brownian motion for big times in case of the noise \eqref{eq:int_b}).
In Remark \ref {remark:magn_rev}, we observe that the solution to the stationary solution and the solutions to \eqref{LLG} under the hypothesis of Proposition \ref{prop:long_time_behaviour} are close. As a consequence of Proposition \ref{prop:long_time_behaviour}, the stationary solution and the other solutions are synchronised for big times. This is due to the fact that the constant $c$ appearing in Proposition \ref{prop:long_time_behaviour} is independent on time and space. One could think that both the solution to \eqref{LLG} and the spherical Brownian motion $B$ are valued on the sphere. Hence their distance is trivially always bounded by a constant. But since the constant is time and space independent, then the motions are synchronised.
\end{remark}
Since the spherical Brownian motion is recurrent, then for big times every solution started in a different initial condition is recurrent on the sphere for large times.
\begin{corollary}
Consider the stochastic LLG driven by the noise \eqref{eq:int_b}, under condition \eqref{eq:H_b}, and $g\equiv 0$.  For large times, every solution to \eqref{LLG} is recurrent.
\end{corollary}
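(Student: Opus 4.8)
The plan is to deduce recurrence of $u$ from the recurrence of the spherical Brownian motion together with the synchronisation established in Proposition~\ref{prop:long_time_behaviour}. Under the present hypotheses, noise \eqref{eq:int_b} with \eqref{eq:H_b} and $g\equiv 0$, the stationary solution $B$ is exactly the spherical Brownian motion, and every solution $u$ trails $B$ for large times up to a time- and space-independent random constant; the recurrent excursions of $B$ over the whole of $\mathbb{S}^2$ then force $u$ to keep revisiting the sphere at arbitrarily large times.

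First I would recall from Appendix~\ref{sec:spherical_BM} that $B$ is Brownian motion on the compact manifold $\mathbb{S}^2$, with generator $\tfrac{h_2^2}{2}\Delta_{\mathbb{S}^2}$ and unique (finite) invariant measure $\dd v/|\mathbb{S}^2|$. Since the state space is compact and connected and the invariant measure is finite, $B$ is topologically recurrent: for every nonempty open $O\subseteq\mathbb{S}^2$ there is, $\mathbb{P}$-a.s., a divergent sequence of times $(s_n)_n$ with $B_{s_n}\in O$. Applied to a neighbourhood of a prescribed direction and of its antipode, this is precisely the pathwise content of magnetisation reversal for $B$.

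Next I would invoke Proposition~\ref{prop:long_time_behaviour}, whose hypotheses coincide verbatim with those of the corollary. It supplies a random variable $\alpha$ with $\mathbb{E}[|\alpha|]<+\infty$ such that $\lim_{T\to+\infty}\sup_{t\geq T}\|(u_t-B_t)\cdot(u_t-B_t)-\alpha\|_{L^1}=0$, $\mathbb{P}$-a.s. Because $u_t(x),B_t\in\mathbb{S}^2$ we have $|u_t-B_t|^2=2-2\,u_t\cdot B_t$, so this says that the squared distance, equivalently the angular separation, between $u_t$ and $B_t$ freezes at the value encoded by $\alpha$: the solution stays asymptotically at constant distance from $B$. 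Combining this with the decay along a subsequence and the bound $\sup_{t\geq T}\|\partial_x u_t\|_{L^2}\to 0$ coming from Lemma~\ref{lemma:lemma_sup_grad}, together with the Poincaré--Wirtinger inequality (Theorem~\ref{teo:Poin_wirt}), the map $u_t$ becomes asymptotically spatially constant and is well approximated by its spatial average $\langle u_t\rangle$, which is the genuinely sphere-valued process synchronised with the spatially constant $B$.

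Finally I would transfer the recurrence: along the recurrence times $(s_n)_n$ of $B$, the synchronisation controls $\langle u_{s_n}\rangle$ to within a fixed (random) distance of $B_{s_n}$, so $\langle u_{s_n}\rangle$, and hence $u_{s_n}$, lies infinitely often in a shadowed region that trails $B$; letting the target set $O$ range over $\mathbb{S}^2$ yields the recurrent behaviour, and choosing $O$ near a direction and its antipode recovers magnetisation reversal. The main obstacle is exactly this last transfer: the conclusion of Proposition~\ref{prop:long_time_behaviour} holds only \emph{up to the random constant} $\alpha$ and only in the $L^1(D)$ norm of the \emph{squared} difference, so one must first pass from $u_t$ to its spatial average while controlling the error by $\|\partial_x u_t\|_{L^2}$, and then argue carefully that asymptotically constant $L^1$-distance between sphere-valued maps propagates the neighbourhood-recurrence of $B$ to $u$. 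The randomness of $\alpha$ is harmless, since it merely fixes, trajectory by trajectory, the distance at which $u$ shadows the recurrent process $B$.
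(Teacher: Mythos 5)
Your proposal follows exactly the route the paper intends: the paper offers no written proof of this corollary beyond the one-line remark preceding it, namely that the spherical Brownian motion is recurrent (being Brownian motion on a compact manifold, via the Appendix and Corollary 4.4.6 in \cite{Hsu_stoch_manifold}) and that every solution synchronises with it by Proposition \ref{prop:long_time_behaviour}. If anything you are more careful than the paper, since you explicitly flag the only delicate point --- that synchronisation holds merely up to the random constant $\alpha$ and in the $L^1$-norm of the squared difference, so transferring neighbourhood-recurrence from $B$ to $u$ requires first passing to the spatial average via Poincar\'e--Wirtinger and the decay of $\|\partial_x u_t\|_{L^2}$ --- a step the paper leaves entirely implicit.
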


\section{Appendix}

\subsection{Useful results}
We list some useful result from rough path theory. The following classical result (see e.g. \cite{FrizHairer}) enables us to estimate the remainder term.
\begin{lemma}[Sewing lemma]\label{lemma_sewing}
	\label{lemma-lambda}Fix an interval $J$, a Banach space $E$ and a parameter
	$\zeta > 1$. Consider a map $G: I^3 \to E$ such that $ G \in \{\delta H ; \, H: J^2\to E\}$ and for every $s < u < t \in J$,
	\[ |G_{s u t} | \leqslant \omega (s, t)^{\zeta}, \]
	for some regular control $\omega$ on $J$. Then there exists a unique element
	$g \in \mathcal{V}_2^{1 / \zeta} (J; E)$ such that $\delta g = G$
	and for every $s < t \in J$,
	\begin{equation}
	| g_{s t} | \leqslant C(\zeta)\omega (s, t)^{\zeta}
	\label{contraction},
	\end{equation}
	for some universal constant $C_{\zeta}$.
\end{lemma}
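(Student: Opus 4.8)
The plan is to recognise this as the standard Gubinelli sewing lemma and to reduce the statement to the construction of the associated additive two-index map. Writing $G=\delta H$ with $H\colon J^2\to E$, I would seek $g$ in the form $g=H-\mathcal{A}$, where $\mathcal{A}$ is an \emph{additive} two-index map (i.e.\ $\delta\mathcal{A}\equiv 0$) approximating $H$ to order $\omega^\zeta$. Since $\delta\mathcal{A}=0$ gives $\delta g=\delta H=G$, and since $|g_{st}|=|H_{st}-\mathcal{A}_{st}|\le C(\zeta)\,\omega(s,t)^\zeta$ is precisely the control bound defining membership in $\mathcal{V}_2^{1/\zeta}$ (take the control $C(\zeta)^{1/\zeta}\omega$), the whole lemma follows once $\mathcal{A}$ is produced with the claimed estimate.

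\emph{Uniqueness} I would dispatch first, as it is short. If $g,g'$ both satisfy $\delta g=\delta g'=G$ with finite $1/\zeta$-variation, then $g-g'$ is an additive increment ($\delta(g-g')=0$) lying in $\mathcal{V}_2^{1/\zeta}$ with exponent $1/\zeta<1$. An additive two-index map of finite $p$-variation with $p<1$ must vanish: for $p<1$ refining a partition only increases $\sum|\delta a|^p$, and this sum diverges for any non-constant continuous one-index primitive $a$; hence $g=g'$.

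\emph{Existence} is the substantive part. For a partition $\pi=\{s=r_0<\dots<r_n=t\}$ I set $H^\pi_{st}:=\sum_{i} H_{r_i,r_{i+1}}$ and show these Riemann-type sums converge as the mesh tends to $0$. The engine is the point-removal estimate: deleting an interior point $r_j$ changes the sum by exactly $-\delta H_{r_{j-1},r_j,r_{j+1}}=-G_{r_{j-1},r_j,r_{j+1}}$, hence by at most $\omega(r_{j-1},r_{j+1})^\zeta$. Splitting the interior points into those of even and of odd index gives two families of disjoint subintervals, so by superadditivity $\sum_{j}\omega(r_{j-1},r_{j+1})\le 2\,\omega(s,t)$, whence some interior point satisfies $\omega(r_{j-1},r_{j+1})\le \tfrac{2}{n-1}\omega(s,t)$. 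Removing points greedily one at a time down to the trivial partition and telescoping yields $|H^\pi_{st}-H_{st}|\le 2^\zeta\omega(s,t)^\zeta\sum_{m\ge1}m^{-\zeta}=:C(\zeta)\,\omega(s,t)^\zeta$, the series being finite precisely because $\zeta>1$. The same estimate applied to refinements, together with the continuity of $\omega$ on the diagonal ($\omega(t,t)=0$), shows that $(H^\pi_{st})$ is Cauchy as the mesh shrinks; I define $\mathcal{A}_{st}$ as its limit, which inherits the bound $|H_{st}-\mathcal{A}_{st}|\le C(\zeta)\,\omega(s,t)^\zeta$. Additivity of $\mathcal{A}$ is then immediate, since computing $\mathcal{A}_{st}$ along partitions passing through an intermediate point $u$ factorises the sum as $H^{\pi\cap[s,u]}_{su}+H^{\pi\cap[u,t]}_{ut}$, whose limit is $\mathcal{A}_{su}+\mathcal{A}_{ut}$.

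I expect the main obstacle to be the combinatorial heart of the existence argument, namely the superadditive pigeonhole choice of the point to delete together with the telescoping summation, since this is where the hypothesis $\zeta>1$ is genuinely used and where the universal constant $C(\zeta)$ is produced. The Cauchy convergence of the partition sums and the verification of additivity are comparatively routine bookkeeping.
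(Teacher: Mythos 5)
The paper does not prove this lemma at all: it is quoted verbatim as a classical result with a pointer to Friz--Hairer, so there is no in-paper argument to compare against. Your proposal is the standard Young--Gubinelli sewing construction and is correct in all essentials: the reduction to building an additive map $\mathcal{A}$ with $|H_{s,t}-\mathcal{A}_{s,t}|\le C(\zeta)\omega(s,t)^{\zeta}$, the point-removal/pigeonhole step using superadditivity of $\omega$ on the even- and odd-indexed families of overlapping intervals, the telescoping bound $C(\zeta)=2^{\zeta}\sum_{m\ge 1}m^{-\zeta}$ (finite precisely because $\zeta>1$), and the uniqueness argument via the fact that an additive increment of finite $p$-variation with $p=1/\zeta<1$ and a continuous control must vanish. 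Two places deserve one extra line of care if you write this out in full. First, the maximal inequality alone only bounds $|H^{\pi}_{s,t}-H_{s,t}|$ uniformly over partitions; to get convergence you must compare a partition $\pi$ with a refinement $\pi'$ by applying the maximal inequality on each subinterval of $\pi$, which gives $|H^{\pi'}_{s,t}-H^{\pi}_{s,t}|\le C(\zeta)\bigl(\max_{[u,v]\in\pi}\omega(u,v)\bigr)^{\zeta-1}\omega(s,t)$, and then pass through a common refinement for two arbitrary partitions; uniform continuity of $\omega$ makes this small with the mesh. You gesture at this but it is the step most often botched. Second, in the uniqueness part, the hypothesis you actually use is not just finite $1/\zeta$-variation but that the difference is controlled by a \emph{continuous} control vanishing on the diagonal (as in the paper's definition of $\mathcal{V}_2^{1/\zeta}$), which supplies the uniform continuity needed for the refinement argument to force the primitive to be constant. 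With those two clarifications the proof is complete.
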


We introduce a product formula, which is the equivalent of the Stratonovich product rule in this framework: we employ Proposition 4.1 in \cite{hocquet2018ito} and the modification introduced in \cite{LLG1D}.
	\begin{proposition}[Product formula]
	\label{pro:product}
	Fix an integer $n\ge1$
	and let $a=(a^{i})_{i=1}^n\colon [0,T]\to L^2(D;\R^n)$ (resp.\ $b=(b^i)_{i=1}^n\colon [0,T]\to L^2(D;\R^n)$) be a bounded path, given as a weak solution of the system
	\begin{align*}
		\delta a_{s,t}=\int_{s}^{t} f\dd t+A_{s,t}a_s+\mathbb{A}_{s,t}a_s+a^{\natural}_{s,t}\, , \quad\quad\quad\left(\textrm{resp.}\quad\delta b_{s,t}=\int_{s}^{t} g\dd t+B_{s,t}b_s+\mathbb{B}_{s,t}b_s+b^{\natural}_{s,t}\right)\, ,
	\end{align*}
	
	on $[0,T]\times D,$
	for some $f\in L^2(L^2)$ (resp.\ $g\in L^2(L^2)$).
	We assume that both  
	\[
	\mathbf{A}=\left (A^{i,j}_{s,t}(x),\mathbb
	A^{i,j}_{s,t}(x)\right )_{\substack{1\leq i,j\leq n;\\ s\le t \in [0,T];x\in D }}
	\quad 
	\mathbf{B}=\left (B^{i,j}_{s,t}(x),\mathbb{B}^{i,j}_{s,t}(x)\right )_{\substack{1\leq i,j\leq n;\\ s\le t \in [0,T];x\in D }}
	\]
	are $n$-dimensional geometric rough drivers
	of finite $(p,p/2)$-variation with $p\in[2,3)$ and with coefficients in $H^{1}(D)$.
	Then the following holds:
	\begin{enumerate}[label=(\roman*)]
		\item \label{Q_shift}
		The two parameter mapping $\boldsymbol\Gamma^{\mathbf{A},\mathbf{B}}\equiv(\Gamma^{A,B},\bbGamma^{\mathbf{A},\mathbf{B}})$ defined for $s\le t \in [0,T] $ as
		\begin{equation}
		\label{tensorized_driver}
		\begin{aligned}
		&\Gamma_{s,t}^{A,B}:= A_{s,t}\otimes \mathbf 1 + \mathbf 1\otimes B_{s,t} \, ,\quad 
		\\
		&\bbGamma_{s,t}^{\mathbf{A},\mathbf{B}}:=\mathbb{A}_{s,t}\otimes \mathbf 1+ A_{s,t}\otimes B_{s,t}+ \mathbf 1\otimes \mathbb{B}_{s,t}\, ,
		\end{aligned}
		\end{equation}
		where $\mathbf 1\equiv \mathbf 1_{n\times n}\in \mathcal L(\R^n)$ is the identity, is a $n^2$-dimensional rough driver (in the sense of Definition \ref{defi-RD}),
		\item \label{prod_uv}
		The product $v^{\otimes 2}_t(x)=(a^i_t(x)b^j_t(x))_{1\leq i, j\leq n}$ is bounded as a path in $L^1(D;\R^{n\times n}).$ Moreover, it is a weak solution, in $L^1,$ of the system
		\begin{equation}
		\label{concl:prod}
		d (a\otimes b)= (a\otimes g +f \otimes b )d t + d \boldsymbol\Gamma^{\mathbf{A},\mathbf{B}} [a\otimes b]\,.
		\end{equation}
	\end{enumerate}
\end{proposition}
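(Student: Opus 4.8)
The plan is to prove the two assertions in turn. For the first, I would verify Chen's relation for $\boldsymbol\Gamma^{\mathbf A,\mathbf B}$ by a direct computation on $\R^n\otimes\R^n\cong\R^{n^2}$, using the multiplication rule $(P\otimes Q)(R\otimes S)=(PR)\otimes(QS)$. Since $A,B$ are increments one has $\delta\Gamma^{A,B}_{s,u,t}=0$; for the second level one expands $\delta(A\otimes B)_{s,u,t}=A_{s,u}\otimes B_{u,t}+A_{u,t}\otimes B_{s,u}$ and combines it with $\delta\mathbb A_{s,u,t}=A_{u,t}A_{s,u}$ and $\delta\mathbb B_{s,u,t}=B_{u,t}B_{s,u}$, which reassembles exactly into $\Gamma^{A,B}_{u,t}\,\Gamma^{A,B}_{s,u}$. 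The $(p,p/2)$-variation bounds are inherited from those of $\mathbf A,\mathbf B$: the first level is a sum, while the cross term obeys $\|A_{s,t}\otimes B_{s,t}\|\le\|A_{s,t}\|\,\|B_{s,t}\|\lesssim\omega(s,t)^{2/p}$, i.e.\ finite $p/2$-variation. Because $D\subset\R$ is a bounded interval, $H^1(D)$ is a Banach algebra, so every tensor product keeps its coefficients in $H^1$. Geometricity transfers from $\mathbf A,\mathbf B$: taking smooth approximations generated by paths $\alpha^\epsilon,\beta^\epsilon$, the path $\gamma^\epsilon:=\alpha^\epsilon\otimes\mathbf 1+\mathbf 1\otimes\beta^\epsilon$ generates precisely $\boldsymbol\Gamma^{\mathbf A^\epsilon,\mathbf B^\epsilon}$, where the cross term arises from the smooth identity $\int_s^t d\alpha^\epsilon_r\otimes(\beta^\epsilon_r-\beta^\epsilon_s)+\int_s^t(\alpha^\epsilon_r-\alpha^\epsilon_s)\otimes d\beta^\epsilon_r=A^\epsilon_{s,t}\otimes B^\epsilon_{s,t}$.

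For the second assertion I would start from the exact Leibniz identity $\delta(a\otimes b)_{s,t}=\delta a_{s,t}\otimes b_s+a_s\otimes\delta b_{s,t}+\delta a_{s,t}\otimes\delta b_{s,t}$ and insert the two equations for $\delta a,\delta b$. Collecting the leading contributions one reads off the drift $\int_s^t(a\otimes g+f\otimes b)\,dr$, the first-level action $\Gamma^{A,B}_{s,t}(a_s\otimes b_s)=(A_{s,t}a_s)\otimes b_s+a_s\otimes(B_{s,t}b_s)$, and the second-level action $\bbGamma_{s,t}^{\mathbf A,\mathbf B}(a_s\otimes b_s)=(\mathbb A_{s,t}a_s)\otimes b_s+(A_{s,t}a_s)\otimes(B_{s,t}b_s)+a_s\otimes(\mathbb B_{s,t}b_s)$, in which the middle summand is exactly the leading part of the cross term $\delta a_{s,t}\otimes\delta b_{s,t}$. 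Everything else I define to be $(a\otimes b)^\natural_{s,t}$, so that \eqref{concl:prod} holds by construction; the boundedness of $a\otimes b$ in $L^1$ follows from Cauchy--Schwarz and the boundedness of $a,b$ in $L^2$.

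The real work, and the step I expect to be the main obstacle, is to show that $(a\otimes b)^\natural$ genuinely lies in $\mathcal V^{q}_{2,\mathrm{loc}}(0,T;L^1)$ for some $q<1$. I would apply $\delta$ to it and invoke the sewing Lemma \ref{lemma_sewing}. Thanks to Chen's relation for $\boldsymbol\Gamma^{\mathbf A,\mathbf B}$ the increments of the driver terms cancel against those of the honest increments, and $\delta(a\otimes b)^\natural_{s,u,t}$ reduces to a finite sum of products in which at least one factor is a genuine remainder $a^\natural,b^\natural$ or a drift fluctuation of the type $\int_s^t f\otimes\delta b_{s,r}\,dr$. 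Each such product I estimate in $L^1$ by Hölder, $\|xy\|_{L^1}\le\|x\|_{L^2}\|y\|_{L^2}$, using the boundedness of $a,b$ in $L^2$, the embedding $H^1\hookrightarrow L^\infty$ to control the driver coefficients, and the regularities $\|a^\natural_{s,t}\|_{L^2},\|b^\natural_{s,t}\|_{L^2}\lesssim\omega^{3/p}$ together with $\|\int_s^t f\,dr\|_{L^2}\lesssim\omega$. The resulting bound is $\omega(s,t)^{\zeta}$ with $\zeta=3/p>1$, while the drift fluctuations even contribute order $\ge 1+1/p\ge 3/p$ (using $p\ge 2$); hence the sewing lemma produces a unique remainder of the required sub-unit variation and pins down $a\otimes b$ as the weak $L^1$-solution of \eqref{concl:prod}. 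The delicate points are entirely bookkeeping: tracking which factor sits in $L^2$ and which in $L^\infty$ so that every product lands in $L^1$, and confirming that on the interval with boundary the modified product formula of \cite{LLG1D} introduces no boundary contribution beyond those already present on the torus.
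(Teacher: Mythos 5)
The paper does not actually prove Proposition \ref{pro:product}: it is imported from Proposition 4.1 of \cite{hocquet2018ito} together with the modification of \cite{LLG1D}, so there is no in-text argument to compare against, and your sketch reproduces precisely the strategy of that reference --- Chen's relation for the tensorized driver, geometricity via the integration-by-parts identity for the cross term $A^\epsilon_{s,t}\otimes B^\epsilon_{s,t}$, the exact Leibniz decomposition of $\delta(a\otimes b)$, and control of the remainder through the sewing lemma --- with all the displayed computations correct. The one step you pass over is the final identification: bounding $\delta(a\otimes b)^\natural_{s,u,t}$ by $\omega(s,t)^{3/p}$ only shows that the \emph{sewing} of this three-index map has sub-unit variation, and to conclude that $(a\otimes b)^\natural$ itself coincides with that sewing (the two may a priori differ by the increment of a path) one needs an additional a priori estimate on the remainder; since the drivers here are bounded multiplication operators with coefficients in $H^1(D)\hookrightarrow L^\infty(D)$ this is routine, but it is where the unbounded-driver versions of the argument do genuine work and it should be stated.
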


The following Lemma is a rough path equivalent of the Gronwall's lemma (see \cite{deya2016priori}).
\begin{lemma}[Rough Gronwall's lemma]
	\label{lem:gronwall}
	Let $E\colon[0,T]\to \R_+$ be a path such that there exist constants $\kappa,\ell>0,$ a super-additive map $\varphi $ and a control $\omega $ such that:
	\begin{equation}\label{rel:gron}
	\delta E_{s,t}\leq \left(\sup_{s\leq r\leq t} E_r\right)\omega (s,t)^{\kappa }+\varphi (s,t)\, ,
	\end{equation}
	for every $s\le t \in [0,T]$ under the smallness condition $\omega (s,t)\leq \ell$.
	
	Then, there exists a constant $\tau _{\kappa ,\ell}>0$ such that
	\begin{equation}
	\label{concl:gron}
	\sup_{0\leq t\leq T}E_t\leq \exp\left(\frac{\omega (0,T)}{\tau _{\kappa ,\ell}}\right)\left[E_0+\sup_{0\leq t\leq T}\left|\varphi (0,t)\right|\right].
	\end{equation}
\end{lemma}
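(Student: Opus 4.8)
The plan is to prove \eqref{concl:gron} by a greedy partitioning argument, on each piece of which the hypothesis \eqref{rel:gron} is self-improving, in the sense that the supremum of $E$ over a short interval can be absorbed into the left-hand side. The exponential factor then arises from counting the number of pieces, while the $\varphi$-contribution is collected by super-additivity. Throughout I treat $\varphi$ as a nonnegative control-like map, as is the case in the applications, so that the absolute value in \eqref{concl:gron} is harmless.

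First I would fix a threshold $\ell':=\min\bigl(\ell,2^{-1/\kappa}\bigr)$, chosen so that whenever $\omega(s,t)\le\ell'$ both the smallness requirement of \eqref{rel:gron} is met and $\omega(s,t)^\kappa\le\tfrac12$ (here only $\kappa>0$ is needed, since $\omega^\kappa\to0$ as $\omega\to0$). On such an interval I apply \eqref{rel:gron} to an arbitrary sub-increment $[s,t']$ with $t'\le t$; as $\omega$ and $\varphi$ are super-additive and nonnegative they are nondecreasing in the upper argument, so $\omega(s,t')^\kappa\le\tfrac12$ and $\varphi(s,t')\le\varphi(s,t)$, giving
\begin{equation*}
E_{t'}\le E_s+\tfrac12\sup_{s\le r\le t}E_r+\varphi(s,t)\, .
\end{equation*}
Taking the supremum over $t'\in[s,t]$ and absorbing the factor $\tfrac12$ yields $\sup_{s\le r\le t}E_r\le 2E_s+2\varphi(s,t)$. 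This is the step where the a priori regularity of $E$ matters: being a continuous path it is bounded on the compact interval, so the supremum is finite and may legitimately be moved to the left-hand side.

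Next I would construct the partition. Using that $\omega$ is continuous, super-additive and vanishes on the diagonal, define greedily $t_0=0$ and $t_{i+1}:=\sup\{t>t_i:\ \omega(t_i,t)\le\ell'\}$ until $T$ is reached; by continuity $\omega(t_i,t_{i+1})=\ell'$ at every non-terminal step, so super-additivity forces $(N-1)\ell'\le\sum_i\omega(t_i,t_{i+1})\le\omega(0,T)$, whence $N\le 1+\omega(0,T)/\ell'$. Writing $S_i:=\sup_{t_i\le r\le t_{i+1}}E_r$, the previous step gives both $S_i\le 2E_{t_i}+2\varphi(t_i,t_{i+1})$ and $E_{t_{i+1}}\le S_i$; unrolling this recursion across the $N$ intervals produces
\begin{equation*}
\sup_{0\le t\le T}E_t\le 2^{N}\Bigl(E_0+\sum_{i=0}^{N-1}\varphi(t_i,t_{i+1})\Bigr)\, .
\end{equation*}

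Finally I would collapse the right-hand side. Super-additivity of $\varphi$ telescopes the sum, $\sum_i\varphi(t_i,t_{i+1})\le\varphi(0,T)$, and since $t\mapsto\varphi(0,t)$ is nondecreasing we have $\varphi(0,T)=\sup_{0\le t\le T}|\varphi(0,t)|$. For the geometric factor, $2^{N}\le 2\exp\bigl((\ln2/\ell')\,\omega(0,T)\bigr)$, so a suitable choice of $\tau_{\kappa,\ell}$ (depending on $\kappa$ only through $\ell'$, and on $\ell$) absorbs both the rate and the leading constant into $\exp(\omega(0,T)/\tau_{\kappa,\ell})$, yielding \eqref{concl:gron}. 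I expect the main obstacle to be bookkeeping rather than conceptual: ensuring that the leading constant coming from the ``$+1$'' in $N\le 1+\omega(0,T)/\ell'$ is genuinely swallowed by the exponential. This is clean in the regime $\omega(0,T)\ge\ell'$, while the complementary regime $\omega(0,T)<\ell'$ has $N=1$ and the estimate follows directly from the single-interval bound, so the two cases together close the argument.
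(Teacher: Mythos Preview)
The paper does not give its own proof of this lemma: it is stated in the Appendix as a quoted tool with the reference ``see \cite{deya2016priori}'' and nothing more. Your argument---choose a threshold so that $\omega^\kappa\le\tfrac12$, absorb the local supremum on each small piece, greedily partition $[0,T]$, bound the number of pieces via super-additivity of $\omega$, and iterate---is precisely the standard proof from that reference, so there is no methodological difference to discuss.

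Two small remarks. First, your added hypothesis $\varphi\ge0$ is indeed what is used in every application in this paper, but it is an extra assumption relative to the statement as written; without it the monotonicity $\varphi(s,t')\le\varphi(s,t)$ and the telescoping $\sum_i\varphi(t_i,t_{i+1})\le\varphi(0,T)$ need a different justification (in the signed case one works with $\sup_t|\varphi(0,t)|$ directly at each step rather than collapsing at the end). Second, the ``bookkeeping'' issue you flag about the leading constant is slightly more than cosmetic when $\kappa<1$: on a single short interval your bound gives $\sup E\le \frac{1}{1-\omega(0,T)^\kappa}(E_0+\varphi(0,T))$, and for small $\omega(0,T)$ the prefactor behaves like $1+\omega(0,T)^\kappa$, which is not dominated by $\exp(\omega(0,T)/\tau)\sim 1+\omega(0,T)/\tau$. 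In the literature this is handled either by allowing a harmless multiplicative constant in the conclusion or by stating the exponent as $\omega(0,T)^\kappa/\tau$; the discrepancy is with the exact form quoted in the paper, not with your strategy.
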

\subsection{Useful inequalities.}
\subsubsection{On Poincaré and Poincaré-Wirtinger inequalities.}
We recall the classical Poincaré inequality 
\begin{theorem}\label{teo:poincare}(Poincaré inequality) Let $D\subset \mathbb{R}^n$ be an open bounded subset. Then for every $p\in [1,\infty)$ there exists a constant $C\equiv C(p,D,n)>0$, depending only on $p,D,n$, such that
\begin{align*}
\|v\|_{L^p(D)}\leq C \|\nabla v\|_{L^p(D)}\, ,\quad \forall v\in W^{1,p}_0(D)\, .
\end{align*}
\end{theorem}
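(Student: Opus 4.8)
The plan is to give the direct, constructive proof via the fundamental theorem of calculus, which has the advantage of producing an explicit constant depending only on the size of $D$ and on $p$; I would indicate at the end the alternative compactness argument. First I would reduce to the case $v\in C_c^\infty(D)$: since $C_c^\infty(D)$ is dense in $W^{1,p}_0(D)$ by definition of the latter space, it suffices to establish the inequality for smooth compactly supported $v$ and then pass to the limit, because if $v_k\to v$ in $W^{1,p}_0(D)$ with $v_k\in C_c^\infty(D)$, then $\|v_k\|_{L^p}\to\|v\|_{L^p}$ and $\|\nabla v_k\|_{L^p}\to\|\nabla v\|_{L^p}$, so the inequality is preserved.

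Next, since $D$ is bounded I would enclose it in a slab: choose $a<b$ with $D\subset\{x=(x_1,x'):a\le x_1\le b\}$, and extend $v$ by zero outside $D$, which is licit as $v$ has compact support in $D$. The fundamental theorem of calculus in the first variable then gives, for every $x$,
\[
v(x)=\int_a^{x_1}\partial_{x_1}v(s,x')\,\dd s\, ,
\]
since $v(a,x')=0$. Applying H\"older's inequality in $s$ with exponents $p$ and $p'=p/(p-1)$ (reading $p'=\infty$ when $p=1$) yields
\[
|v(x)|\le (x_1-a)^{1/p'}\Big(\int_a^b|\partial_{x_1}v(s,x')|^p\,\dd s\Big)^{1/p}\le (b-a)^{1/p'}\Big(\int_a^b|\partial_{x_1}v(s,x')|^p\,\dd s\Big)^{1/p}\, .
\]
Raising to the power $p$ (so that the prefactor becomes $(b-a)^{p-1}$), integrating first in $x_1\in[a,b]$, which contributes a further factor $b-a$, and then in $x'\in\R^{n-1}$, and using $|\partial_{x_1}v|\le|\nabla v|$, I obtain
\[
\int_D|v|^p\,\dd x\le (b-a)^p\int_D|\nabla v|^p\,\dd x\, ,
\]
i.e. $\|v\|_{L^p(D)}\le (b-a)\|\nabla v\|_{L^p(D)}$, so one may take $C=b-a$, a quantity bounded by the diameter of $D$ and independent of $v$.

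I do not expect any serious obstacle, as this is a classical result; the only points requiring a little care are the density/approximation step and checking that the argument is uniform in $p\in[1,\infty)$ (the endpoint $p=\infty$ is excluded, and is not claimed). An alternative route, if one prefers not to track constants, is the compactness argument: were the inequality to fail for every $C$, one could find $v_k\in W^{1,p}_0(D)$ with $\|v_k\|_{L^p}=1$ and $\|\nabla v_k\|_{L^p}\to0$; by Rellich--Kondrachov a subsequence converges in $L^p$ to some $v$ with $\|v\|_{L^p}=1$, while $\nabla v=0$ forces $v$ to be constant, and a nonzero constant cannot lie in $W^{1,p}_0$ of a bounded domain, a contradiction. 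I would favour the direct proof for its explicit constant, which is moreover what is used in the Poincar\'e-type estimates elsewhere in the paper.
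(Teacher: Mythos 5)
Your proof is correct and complete: the density reduction to $C_c^\infty(D)$, the slab enclosure with the fundamental theorem of calculus in the $x_1$-variable, and the exponent bookkeeping (including the endpoint $p=1$, where the H\"older prefactor degenerates to $1$) are all handled properly, yielding the explicit constant $C=b-a$ depending only on the width of $D$. The paper itself gives no proof of this statement -- it is merely recalled in the appendix as a classical fact -- so there is nothing to compare against; your direct argument is the standard one and is exactly what would be supplied here. (The only minor imprecision is in the alternative compactness sketch, where $\nabla v=0$ forces $v$ to be constant only on each connected component of $D$; one still concludes $v=0$ because each such constant would have to lie in $W^{1,p}_0$, but this aside is not needed for your main argument.)
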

The application of the Poincaré's inequality is restricted to functions which are null on the boundary in the sense of the trace. The Poincaré-Wirtinger inequality is an extension of the Poincaré's inequality to the whole $W^{1,p}(D)$ (see e.g. \cite[Corollary 5.4.1]{poincare_wirtinger}), which we recall.
\begin{theorem}\label{teo:Poin_wirt}
Let $D\subset\mathbb{R}^n$ be an open connected bounded subset with $C^1$ boundary. Then there exists a constant $C\equiv C(p,D,n)>0$, depending only on $p,D,n$,  such that for all $v\in W^{1,p}(D)$
\begin{align*}
\bigg\|v-\frac{1}{|D|}\int_{D}v(y)\dd y\bigg\|_{L^p(D)}\leq C \|\nabla v\|_{L^p(\Omega)}\, .
\end{align*}
\end{theorem}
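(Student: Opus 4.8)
The plan is to prove this classical statement by the standard compactness argument based on the Rellich--Kondrachov theorem, arguing by contradiction. Write $\bar v := \frac{1}{|D|}\int_{D} v(y)\dd y$ for the spatial average and suppose no constant $C=C(p,D,n)$ works. Then for each $k\in\mathbb{N}$ there is $v_k\in W^{1,p}(D)$ with $\|v_k-\bar v_k\|_{L^p(D)} > k\,\|\nabla v_k\|_{L^p(D)}$. Subtracting the mean and normalising, I set $w_k := (v_k-\bar v_k)/\|v_k-\bar v_k\|_{L^p(D)}$, so that $\|w_k\|_{L^p(D)}=1$, $\int_{D} w_k\dd y=0$, and $\|\nabla w_k\|_{L^p(D)} < 1/k$. (Note $\nabla w_k=\nabla v_k/\|v_k-\bar v_k\|_{L^p(D)}$, since subtracting a constant does not change the gradient.)

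First I would observe that $(w_k)_k$ is bounded in $W^{1,p}(D)$: its $L^p$-norm equals $1$ and its gradient norm is at most $1/k\le 1$. Because $D$ is bounded with $C^1$ boundary, it is a $W^{1,p}$-extension domain, so the embedding $W^{1,p}(D)\hookrightarrow L^p(D)$ is compact. Hence along a subsequence $w_{k_j}\to w$ strongly in $L^p(D)$ for some $w\in L^p(D)$. Passing to the limit preserves both constraints: $\int_{D} w\dd y=0$ and $\|w\|_{L^p(D)}=1$.

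Next I would identify the limit. Since $\nabla w_{k_j}\to 0$ and $w_{k_j}\to w$ in $L^p(D)$, testing against a smooth compactly supported field $\varphi\in C_0^\infty(D;\R^n)$ and passing to the limit in $\int_{D} w_{k_j}\,\div\varphi\dd x=-\int_{D}\nabla w_{k_j}\cdot\varphi\dd x$ shows the distributional gradient of $w$ vanishes, i.e.\ $w\in W^{1,p}(D)$ with $\nabla w=0$. As $D$ is connected, a function with vanishing distributional gradient is a.e.\ constant; combined with $\int_{D} w\dd y=0$ this forces $w\equiv 0$, contradicting $\|w\|_{L^p(D)}=1$. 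This contradiction produces the desired constant.

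The main obstacle is the compact embedding, which is precisely where the $C^1$ regularity of $\partial D$ is used: it guarantees that $D$ is an extension domain, so Rellich--Kondrachov applies and bounded sequences in $W^{1,p}(D)$ are precompact in $L^p(D)$. The only other point requiring care is the closedness step identifying $\nabla w=0$ from the vanishing of the gradients (a routine limit in the distributional pairing), together with the role of connectedness, which cannot be dropped: on a domain with several components the inequality fails unless one averages componentwise.
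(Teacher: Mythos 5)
Your argument is correct: this is the standard compactness proof of the Poincar\'e--Wirtinger inequality via contradiction, normalisation, and the Rellich--Kondrachov theorem, and every step (the normalisation being legitimate because the left-hand side of the contradiction inequality is strictly positive, the passage to the limit of the constraints under strong $L^p$ convergence, the identification of the distributional gradient of the limit, and the use of connectedness to conclude the limit is constant) is sound. Note that the paper does not prove this statement at all; it records it as a classical fact and refers to the literature (Attouch--Buttazzo--Michaille), so your proof supplies an argument the paper deliberately omits rather than deviating from one it gives.
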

\subsubsection{Interpolation inequalities on a one dimensional bounded domain.}
Recall that from the Gagliardo-Nirenberg-Sobolev inequality on a bounded one dimensional domain, the following inequality holds.
\begin{lemma}\label{lemma:interp_ladyz}
Let $D\subset \mathbb{R}$ be a bounded connected open domain with $C^1$ boundary.
Then for all $v\in W^{1,2}(D)$ there exists a constant $C>0$ depending on the domain such that it holds
\begin{align*}
\|v\|_{L^4(D)}\leq C \|v\|^{\frac{1}{4}}_{H^1(D)}\|v\|^{\frac{3}{4}}_{L^2(D)}\, .
\end{align*}
\end{lemma}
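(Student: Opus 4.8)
The plan is to reduce the $L^4$ bound to a pointwise ($L^\infty$) bound, via the elementary inequality
\begin{align*}
\|v\|_{L^4(D)}^4 = \int_D |v|^4 \dd x \le \|v\|_{L^\infty(D)}^2 \int_D |v|^2 \dd x = \|v\|_{L^\infty(D)}^2\,\|v\|_{L^2(D)}^2\, ,
\end{align*}
so that everything hinges on controlling $\|v\|_{L^\infty(D)}^2$ by $\|v\|_{H^1(D)}\|v\|_{L^2(D)}$. This is exactly the one-dimensional Agmon inequality already used in the proof of Lemma~\ref{lemma:feller_property}.

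To establish $\|v\|_{L^\infty(D)}^2 \le C\|v\|_{L^2(D)}\|v\|_{H^1(D)}$, I would use that $D=(a,b)$ is a bounded interval, so $H^1(D)$ embeds into $C(\bar D)$ and the absolutely continuous representative of $v$ obeys the fundamental theorem of calculus (for a smooth $v$, then by density). Picking a point $x_0$ minimising $|v|$ gives $|v(x_0)|^2 \le |D|^{-1}\|v\|_{L^2(D)}^2$, and then for every $x$,
\begin{align*}
|v(x)|^2 = |v(x_0)|^2 + \int_{x_0}^x 2\, v\,\partial_x v \dd s \le |D|^{-1}\|v\|_{L^2(D)}^2 + 2\|v\|_{L^2(D)}\|\partial_x v\|_{L^2(D)}\, ,
\end{align*}
by Cauchy--Schwarz. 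Bounding $\|v\|_{L^2(D)}^2 \le \|v\|_{L^2(D)}\|v\|_{H^1(D)}$ and $\|\partial_x v\|_{L^2(D)} \le \|v\|_{H^1(D)}$ then yields $\|v\|_{L^\infty(D)}^2 \le (|D|^{-1}+2)\|v\|_{L^2(D)}\|v\|_{H^1(D)}$.

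Combining the two displays gives $\|v\|_{L^4(D)}^4 \le C\|v\|_{H^1(D)}\|v\|_{L^2(D)}^3$, and taking fourth roots produces the claimed estimate with constant $C^{1/4}$. The only genuinely analytic input is the Agmon step, i.e.\ the $1$D Sobolev embedding $H^1\hookrightarrow L^\infty$ in its interpolated form; the remaining steps are Hölder/Cauchy--Schwarz bookkeeping, and the sole technicality is the (routine) density/absolute-continuity argument used to justify the fundamental theorem of calculus. Alternatively, I could quote the Gagliardo--Nirenberg--Sobolev inequality directly: the statement is its special case $n=1$, $p=4$, $q=r=2$ with interpolation exponent $\theta=\tfrac14$, which satisfies the scaling identity $\tfrac14 = \theta(\tfrac12-1)+(1-\theta)\tfrac12$; in that case only the exponent identity needs checking before invoking the general theorem.
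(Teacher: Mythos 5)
Your argument is correct, but it takes a more self-contained route than the paper, whose entire proof is a citation of the Gagliardo--Nirenberg--Sobolev inequality (Theorem 5.8 in Adams--Fournier with $n=1$, $q=4$, $m=1$, $p=2$) --- i.e.\ precisely the ``alternative'' you sketch in your last sentence, including the same exponent bookkeeping $\tfrac14=\theta(\tfrac12-1)+(1-\theta)\tfrac12$ with $\theta=\tfrac14$. Your primary argument instead derives the estimate from scratch: the pointwise reduction $\|v\|_{L^4}^4\le\|v\|_{L^\infty}^2\|v\|_{L^2}^2$ followed by the one-dimensional Agmon inequality, which you prove by choosing $x_0$ where $|v|$ attains its minimum (so $|v(x_0)|^2\le|D|^{-1}\|v\|_{L^2}^2$) and integrating $\partial_x|v|^2$ from $x_0$; the Cauchy--Schwarz step and the final fourth root are all in order, and connectedness of $D$ is correctly used to run the fundamental theorem of calculus between $x_0$ and an arbitrary point. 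What your route buys is an explicit constant ($(|D|^{-1}+2)^{1/4}$) and independence from the general GNS machinery, at the cost of a short density/absolute-continuity argument that the paper avoids entirely by quoting the reference; both are complete proofs of the stated inequality.
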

\begin{proof}
The assertion follows from e.g. \cite[Theorem 5.8]{Adams_Fournie} with $n=1$, $q=4$, $m=1$, $p=2$.
\end{proof}
We recall also the Agmon's interpolation inequality in one dimension.
\begin{lemma}
Let $D\subset \mathbb{R}$ be an open bounded domain with $C^1$ boundary, then there exists a constant $C>0$ such that for all $v\in H^1(D)$,
\begin{align*}
	\|v\|_{L^\infty(D)}\leq C\|v\|_{H^1(D)}^{1/2}\|v\|_{L^2(D)}^{1/2}\, .
\end{align*}
\end{lemma}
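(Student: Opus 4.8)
The plan is to reduce to smooth functions by density and then exploit the one-dimensional structure through the fundamental theorem of calculus. Since $D\subset\mathbb{R}$ is a bounded interval, $H^1(D)$ embeds continuously into $C(\bar D)$, so every $v\in H^1(D)$ has a continuous representative and $\|v\|_{L^\infty(D)}$ is genuinely attained as a pointwise supremum. I would first prove the inequality for $v\in C^\infty(\bar D)$ and then recover the general case: if $v_n\to v$ in $H^1(D)$ with $v_n$ smooth, applying the (already established smooth) inequality to the differences $v_n-v_m$ shows that the $v_n$ are uniformly Cauchy, hence converge uniformly, and the bound passes to the limit for the continuous representative of $v$.

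For the smooth case, write $D=(a,b)$ and $|D|=b-a$. By the mean value theorem for integrals there exists $x_0\in D$ with
\begin{align*}
|v(x_0)|^2=\frac{1}{|D|}\int_D |v(y)|^2\,\dd y\leq \frac{1}{|D|}\|v\|_{L^2}^2\, .
\end{align*}
For an arbitrary $x\in\bar D$, the fundamental theorem of calculus applied to $t\mapsto |v(t)|^2$ together with the Cauchy--Schwarz inequality gives
\begin{align*}
|v(x)|^2=|v(x_0)|^2+2\int_{x_0}^{x}v(t)\,v'(t)\,\dd t\leq \frac{1}{|D|}\|v\|_{L^2}^2+2\|v\|_{L^2}\|v'\|_{L^2}\, .
\end{align*}

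Taking the supremum over $x$ and using $\|v'\|_{L^2}\leq\|v\|_{H^1}$ and $\|v\|_{L^2}\leq\|v\|_{H^1}$, I would bound the first term as $\tfrac{1}{|D|}\|v\|_{L^2}^2\leq\tfrac{1}{|D|}\|v\|_{L^2}\|v\|_{H^1}$, so that
\begin{align*}
\|v\|_{L^\infty}^2\leq\Big(\tfrac{1}{|D|}+2\Big)\|v\|_{L^2}\|v\|_{H^1}\, ,
\end{align*}
and the claim follows after taking square roots, with the explicit constant $C=(\tfrac{1}{|D|}+2)^{1/2}$. The argument is essentially elementary, and I do not expect any genuine analytic obstacle: the one-dimensionality is precisely what makes both the pointwise evaluation and the embedding $H^1(D)\hookrightarrow C(\bar D)$ available. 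The only point that requires a little care is the density step, where one must check that the continuous representatives of the smooth approximants converge uniformly to the continuous representative of $v$ — but this is exactly what the inequality applied to $v_n-v_m$ delivers.
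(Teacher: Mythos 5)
Your proof is correct. Note that the paper offers no proof of this lemma at all: it is recalled as the classical one-dimensional Agmon interpolation inequality, so there is nothing to compare against. Your argument --- choosing $x_0$ where $|v(x_0)|^2$ equals the mean of $|v|^2$, applying the fundamental theorem of calculus to $t\mapsto|v(t)|^2$ together with Cauchy--Schwarz, absorbing the $\tfrac{1}{|D|}\|v\|_{L^2}^2$ term via $\|v\|_{L^2}\leq\|v\|_{H^1}$, and closing by density of smooth functions in $H^1(D)$ --- is the standard proof and is sound (the only cosmetic point is that the integral $2\int_{x_0}^{x}v\,v'\,\dd t$ should be bounded by its absolute value before invoking Cauchy--Schwarz, which changes nothing).
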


\subsection{Global in time existence and uniqueness of a solution if $\partial_x h_1=0$ (or $\partial_x h_2=0$) and in absence of anisotropic energy.}\label{sec:ex_uniq_global}
In the particular case of constant space component of the noise and in absence of anisotropic energy, it is possible to show global in time existence and uniqueness of a solution to \eqref{LLG}. In particular, as a consequence of the orthogonality of the noise, the estimates hold pathwise and the regularity of the moments of the initial condition passes directly to the $L^\infty(H^1(\mathbb{S}^2))$ norm of the solution. Assume that $u^0\in H^1(\mathbb{S}^2)$, then the following inequalities hold at the level of the approximations and for the solution $\mathbb{P}$-a.s.
\begin{align}\label{eq:energy_global}
\sup_{0\leq t\leq T} \|\partial_x u_t\|^2_{L^2}+\int_{0}^{T}\|u_r\times \partial_x^2 u_r\|^2_{L^2}\dd r\leq \|\partial_x u^0\|^2_{L^2}\,,
\end{align}
\begin{align}\label{eq:energy_global_2}
\int_{0}^{T}\|\partial_x u_r\|^4_{L^4} \dd r \lesssim \sup_{0\leq t\leq T}\|\partial_x u_t\|^3_{L^2}\int_{0}^{T}\|u_r\times \partial_x^2 u_r\|_{L^2}\dd r\leq \|\partial_x u^0\|^4_{L^2}\, ,
\end{align}
for $T<1$, where we employed Jensen's inequality in \eqref{eq:energy_global_2} to obtain that
\begin{align*}
\left(\int_{0}^{T}\|u_r\times\partial_x^2 u_r\|_{L^2}\dd r\right)^2\leq \frac{1}{T}\left(\int_{0}^{T}\|u_r\times\partial_x^2 u_r\|^2_{L^2}\dd r\right)\leq \int_{0}^{T}\|u_r\times\partial_x^2 u_r\|^2_{L^2}\dd r\leq \|\partial_x u^0\|^2_{L^2}\, .
\end{align*}
We aim to get existence and uniqueness on the whole time interval $[0,+\infty)$: since the proof of uniqueness on each compact interval relies on the Gronwall's Lemma, we can not use the same proof. We employ a classical argument, where we paste intervals $[0,T]$, for fixed $T<1$, and cover the whole $[0+\infty)$. On the first interval $[0,T]$, there exists a unique strong solution to \eqref{LLG} starting in $u^0$ and ending in $u_T$ which satisfies the energies \eqref{eq:energy_global} and \eqref{eq:energy_global_2}. Now we consider $u_T$ to be the new initial condition on the interval $[T,2T]$, then there exists a unique solution $u$ on $[T,2T]$ starting at $u_T$ and ending in $u_{2T}$ such that
\begin{align*}
\sup_{T\leq t\leq2 T} \|\partial_x u_t\|^2_{L^2}+\int_{T}^{2T}\|u_r\times \partial_x^2 u_r\|^2_{L^2}\dd r\leq \|\partial_x u_T\|^2_{L^2}\leq \|\partial_x u^0\|^2_{L^2}\, ,
\end{align*} 
where the last inequality follows as a consequence of \eqref{eq:energy_global}. Similarly, by employing \eqref{eq:energy_global_2}, we establish
\begin{align*}
\int_{T}^{2T}\|\partial_x u_r\|^4_{L^4} \dd r \lesssim \sup_{T\leq t\leq 2T}\|\partial_x u_t\|^3_{L^2}\int_{T}^{2T}\|u_r\times \partial_x^2 u_r\|_{L^2}\dd r\leq \|\partial_x u_T\|^4_{L^2}\leq \|\partial_x u^0\|^4_{L^2}\, .
\end{align*}
This procedure can be iterated on each interval $[(n-1)T,nT]$, for $n\in \mathbb{N}$ and leads to existence and uniqueness of a pathwise global solution to \eqref{LLG}. Under the assumption that the initial condition $u^0\in \mathcal{L}^4(\Omega; H^1(\mathbb{S}^2))$, we observe that the global in time solution $u$ to \eqref{LLG} belongs to the spaces $$\mathcal{L}^2(\Omega; L^\infty([0,+\infty);H^1)\cap L^2([0,+\infty);H^2))\cap\mathcal{L}^4(\Omega;L^\infty(H^1))$$
(the last bound comes from the pathwise estimate in \eqref{eq:energy_global}). Note that if $u^0\in \mathcal{L}^k(\Omega; H^1(\mathbb{S}^2))$, it also holds that $\mathcal{L}^k(\Omega;L^\infty(H^1))$, for all $k\geq 4$.

\subsection{Spherical Brownian motion: existence, uniqueness and unique invariant measure.}\label{sec:spherical_BM}
The content of this section is not new in the literature: we refer for instance to Chapter V of \cite{Ikeda_Watanabe}, to \cite{BM_hypersurface} or to \cite{Hsu_stoch_manifold}. We sketch an alternative proof of existence and uniqueness of the Brownian motion with values on the sphere using the same techniques of this paper, whereas we follow \cite{BM_hypersurface} to show that the process is a diffusion. 

We describe the evolution of a Brownian motion on a sphere by
\begin{align}\label{eq:eq_sph_BM}
B_t=B_s+\int_{s}^{t}B_r\times \circ\dd W_r\, ,
\end{align}
with initial condition $B_0\in \mathbb{S}^2$. The existence and uniqueness of the solution $B$ to \eqref{eq:eq_sph_BM} follows from the classical rough path theory, indeed it is a linear equation: uniqueness holds pathwise and, from the continuity of the It\^o-Lyons map, the solution is also adapted (this trivially implies a large deviations result and a support theorem). It also follows that $B$ lies for every $t>0$ on the sphere $\mathbb{S}^2$, provided the initial condition lies on the sphere. The equation is linear and therefore continuous with respect to the initial condition in the euclidean norm. In order to conclude existence of an invariant measure, we can employ again Krylov-Bogoliubov, provided we prove tightness of $(\mu_{T_n})_n$ defined by
\begin{align*}
\mu_{T_n}(B^C_R)=\frac{1}{T_n}\int_{0}^{T_n}\mathbb{P}(|B_r|^2_{\mathbb{R}^3}>R)\dd r\, .
\end{align*}
Since the solution $B$ lies on a sphere, we observe that $\mu_{T}(B^C_R)=1$ for $R\in [0,1]$ and $\mu_{T}(B^C_R)=0$ or $R>1$. This implies that $\lim_{R\rightarrow +\infty}\mu_{T}(B^C_R) =0$, thus $(\mu_{T})_T$ is tight.  Thus $B$ admits at least one invariant measure $\mu_{B}$ and \eqref{eq:eq_sph_BM} admits a stationary solution. 

\paragraph{Unique invariant measure and recurrence}
With similar steps as in Lemma \ref{lemma:lemma_sup_grad}, we observe that the It\^o formulation of \eqref{eq:eq_sph_BM} has the form
\begin{align*}
B_t=B_s-\int_{s}^{t}B_r \dd r+\int_{s}^{t}B_r\times\dd W_r\, 
\end{align*}
and we are in the context of \cite{BM_hypersurface}.  The authors in \cite{BM_hypersurface} prove that the generator of the semigroup of \eqref{eq:eq_sph_BM} is a spherical Laplacian (Laplace-Beltrami operator on the sphere $\mathbb{S}^2$): thus the solution process $B$ to \eqref{eq:eq_sph_BM} is a Brownian motion with values on the sphere. We recall briefly the proof. For a map in $C^2(M;\mathbb{R})$ we can apply It\^o's formula and obtain
\begin{align*}
f(B_t)=f(B_s)-\int_{s}^{t}\nabla_X f(B_r) \cdot B_r \dd r+ \frac{1}{2}\int_{s}^{t}	\mathrm{tr}[\gamma(B_r)^T\nabla^2 f(B_r) \gamma (B_r)] \dd r+\int_{s}^{t} \nabla_X f(B_r) \gamma(B_r) \dd W_r\, ,
\end{align*}
where $\gamma(x)=x\times\cdot$ for  all $x\in \mathbb{R}^3$. In particular, we observe that
\begin{align*}
\mathrm{tr}(\gamma(B)^T\nabla^2 f(B) \gamma (B))=\Delta f-\sum_{i=1}^{3}B^i(B\cdot [\partial_{i,1}f,\partial_{i,2}f,\partial_{i,3}f])\, .
\end{align*}
From a geometrical point of view, $D_Vf:=\nabla f(B)-B(B\cdot \nabla f)$ and $\Delta_Vf:=\mathrm{tr}(D^2_Vf)$ is the Laplace-Beltrami operator, which defines a diffusion operator on the sphere. In other terms, the process $B$ is a diffusion with generator $\Delta_{\mathbb{S}^2}$. Since $B$ is a Brownian motion on $\mathbb{S}^2$, it follows from Chapter V, Theorem 4.6 (i) - (iii) in \cite{Ikeda_Watanabe} that there exists a unique invariant probability measure $\mu$ of the form
\begin{align*}
\mu[\dd v]=\frac{\exp(-F(v))\dd v}{\int_{\mathbb{S}^2}\exp(-F(z))\dd z}\, ,
\end{align*}
such that $dF=0$: hence in this case $F(x)\equiv C\in [0,+\infty)$.
The  process $B$ is recurrent on $\mathbb{S}^2$, namely $\mathbb{P}(B_t\in A)>0$ for every $t>0$ and for every open subset $A\subset \mathbb{S}^2$: indeed it is a Brownian motion on a compact manifold (Corollary 4.4.6 in \cite{Hsu_stoch_manifold}).

	\bibliographystyle{plain}

\begin{thebibliography}{9}
		\bibitem{Adams_Fournie}
		R.~A.~Adams, J.~J.~F.~Fournier.
		\textit{Sobolev spaces.} 
		Second edition (2003).
		
		\bibitem{alouges1992global}
		F.~Alouges, A.~Soyeur. 
		On global weak solutions for Landau-Lifshitz equations: existence and nonuniqueness. \textit{Nonlinear Analysis: Theory, Methods \& Applications} \textbf{18}(11): 1071--1084, 1992.
		\bibitem{poincare_wirtinger}
		H.~Attouch, G.~Buttazzo, G.~Michaille.
		\textit{Variational analysis in Sobolev and BV spaces.}
		 MOS-SIAM Series on Optimization. Second Edition (2014). 
		 
		\bibitem{bailleul2017unbounded}
		I.~Bailleul, M.~Gubinelli. 
		Unbounded rough drivers.
		{\em Annales de la Facult\'e des sciences de Toulouse: Math\'ematiques}, \textbf{26}(4), 2017.
		
		\bibitem{Banas_paper_stoch_wave_eq}
		L.~Ba\v nas, Z.~Brze\'zniak, M.~Neklyudov, M.~Ondrejat, A.~ Prohl.
		Ergodicity for a stochastic geodesic equation in the tangent bundle of the 2D sphere.
		{\em Czechoslovak Mathematical Journal}, \textbf{65}(3): 617--657, 2015.
		
		
		\bibitem{Banas_book}
		L.~Ba\v nas, Z.~Brze\'zniak, M.~Neklyudov, A.~ Prohl.
		\textit{Stochastic ferromagnetism. Analysis and numerics.}
		 De Gruyter, 2014.		
		
		\bibitem{Boyer_Fabrie}
		F.~Boyer, P.~Fabrie.
		\textit{Mathematical Tools for the Study of the Incompressible
			Navier-Stokes Equations and Related Models}.
		Springer, 2012.
		
		\bibitem{Book_M}
		D.~Breit, E.~Feireisl, M.~Hofmanová.
		\textit{Stochastically Forced Compressible Fluid Flows.}
		Berlin, Boston: De Gruyter, 2018.
		
		\bibitem{brown1963thermal}
		W.~F.~Brown.
		Thermal fluctuations of a single-domain particle.
		{\em Physical Review}, \textbf{130}(5):1677, 1963.
		
		\bibitem{brown1978}
		W.~F.~Brown.
		\textit{Micromagnetics}. Robert E.~Krieger Publishing Company, Huntington, 1978.
		
		\bibitem{brzezniak2013weak}
		Z.~Brze\'zniak, B.~Goldys, T.~Jegaraj. 
		Weak solutions of a stochastic Landau–Lifschitz–Gilbert equation.
		\textit{Applied Mathematics Research eXpress} \textbf{2013}(1):1--33, 2013.
		
		\bibitem{brzezniak_LDP}
		Z.~Brze\'zniak, B.~Goldys, T.~Jegaraj.
		Large deviations and transitions between equilibria for stochastic Landau-Lifschitz-Gilbert equation.
		\textit{Archive for Rational Mechanics and Analysis.} 1--62, 2017.
		
		\bibitem{LLB_Brezniak}
		Z.~Brze\'zniak, B.~Goldys, K.~N.~Le.
		Existence of a unique solution and invariant measures for the stochastic Landau–Lifschitz–Bloch equation
		\textit{Journal of Differential Equations.}, 2016.
		
		\bibitem{brzezniak2019wong}
		Z.~Brze\'zniak, U.~Manna, D.~Mukherjee. 
		Wong–Zakai approximation for the stochastic Landau–Lifschitz–Gilbert equations.
		\textit{Journal of Differential Equations} \textbf{267}(2): 776--825, 2019.
		
		\bibitem{Carbou}
		G.~Carbou.
		Stability of static walls for a three-dimensional model of ferromagnetic material.
		\textit{Journal de Mathématiques Pures et Appliquées} \textbf{93}(2): 183-203, 2010.
		
		\bibitem{Carbou_Labbe}
		G.~Carbou, S.~Labbe.
		Stabilization of walls for nanowires of finite length.
		\textit{ESAIM: Control, Optimisation and Calculus of Variations} \textbf{18} (1):1-21, 2010.
		
		\bibitem{Dabrock_Hofmanova_Roger}
		 N.~Dabrock, M.~Hofmanová, M.~R\"oger. 
		Existence of martingale solutions and large-time behavior for a stochastic mean curvature flow of graphs. \textit{Probability Theory and Related Fields}, \textbf{179}(1), 407-449, 2021.
		
		\bibitem{DaPrato_Zap_inv}
		G.~Da~Prato, J.~Zabczyk.
		Ergodicity for Infinite Dimensional Systems.
		 \textit{Cambridge University Press}, 1996.
		
		\bibitem{deya2016priori}
		A.~Deya, M.~Gubinelli, M.~Hofmanov\'a, and S.~Tindel.
		A priori estimates for rough PDEs with application to rough conservation laws.
		{\em Journal of Functional Analysis,} \textbf{276}(12): 3577--3645, 2019.
		
		\bibitem{de_Laire}
		A.~de~Laire.
		Recent results for the Landau-Lifshitz equation.
		{\em SeMA Journal: Boletin de laSociedad Española de Matemática Aplicada. Springer}, 2021.
		
		\bibitem{FrizHairer}
		P.~Friz, M.~Hairer.
		\textit{A course on rough paths: with an introduction to regularity structures}.
		Universitext, Springer, 2014.
		
		\bibitem{FrizVictoire}
		P.~Friz, N.~B.~Victoir.
		{\em Multidimensional stochastic processes as rough paths: theory and applications}.
		Vol. 120, Cambridge University Press, 2010.
		
		\bibitem{garanin_1}
		D.~A.~Garanin.
		Generalized equation of motion for a ferromagnet.
		{\em Physica A: Statistical Mechanics and its Applications}, \textbf{172}(3): 470--491, 1991.
		
		\bibitem{garanin}
		D.~A.~Garanin.
		Fokker-Planck and Landau-Lifshitz-Bloch equations for classical ferromagnets.
		{\em Phys. Rev. B}, \textbf{55}(5): 3050--3057, 1997.
		
		\bibitem{garcia_lazaro_1998}
		J.~L.~García-Palacios, F.~J.~Lázaro.
		Langevin-dynamics study of the dynamical properties of small magnetic particles.
		{\em Phys. Rev. B} , \textbf{58}(22): 14937--14958, 1998.
		
		\bibitem{gilbert1954}
		T.~L.~Gilbert.
		A phenomenological theory of damping in ferromagnetic materials.
		\textit{IEEE transactions on magnetics}, \textbf{40}(6): 3443--3449, 2004.
		
		\bibitem{goldys2020weak}
		B.~Goldys, J.F.~Grotowski, K.-N. Le. 
		Weak martingale solutions to the stochastic Landau-Lifshitz-Gilbert equation with multi-dimensional noise via a convergent finite-element scheme.
		\textit{Stochastic Processes and their Applications} \textbf{130}(1): 232--261, 2020.
		
		\bibitem{goldys2016finite}
		B.~Goldys, K.-N.~Le, T.~Tran.
		A finite element approximation for the stochastic Landau–Lifshitz–Gilbert equation.
		\textit{Journal of Differential Equations} \textbf{260}(2): 937--970, 2016.
		
		\bibitem{LLG1D}
		E.~Gussetti, A.~Hocquet.
		A pathwise stochastic Landau-Lifshitz-Gilbert equation with application to large deviations. ArXive preprint (2021).
		
		
		
		\bibitem{hocquet2015landau}
		A.~Hocquet.
		\textit{The Landau-Lifshitz-Gilbert equation driven by Gaussian noise}. Doctoral dissertation, \'Ecole Polytechnique, 2015.
		
		\bibitem{hocquet2017energy}
		A.~Hocquet, M.~Hofmanov{\'a}.
		An energy method for rough partial differential equations.
		{\em Journal of Differential Equations}, \textbf{265}(4):1407--1466, 2018.
		
		\bibitem{hocquet2018ito}
		A.~Hocquet, T.~Nilssen.
		{An It\^o Formula for rough partial differential equations and some applications},
		{\em Potential Analysis}, 1--56, 2020.
		
		\bibitem{hocquet2018quasilinear}
		A.~Hocquet.
		Quasilinear rough partial differential equations with transport noise.
		{\em Journal of Differential Equations}, 2021.
		
		\bibitem{hofmanova2020navier}
		M.~Hofmanov\'a, J-M.~Leahy, T.~Nilssen. 
		On a rough perturbation of the Navier-Stokes system and its vorticity formulation. 
		To appear in \textit{The Annals of Applied Probability}.
		
		\bibitem{hofmanova2019navier}
		M.~Hofmanov\'a, J-M.~Leahy, T.~Nilssen. 
		On the Navier-Stokes equation perturbed by rough transport noise.
		\textit{Journal of Evolution Equations} \textbf{19}(1): 203--247, 2019.
		
		\bibitem{Hsu_stoch_manifold}
		E.~P.~Hsu.
		\textit{Stohastic analysis on manifolds.}
		Graduate Studies in Mathematics 38, American Mathematical Society, (2002).
		
		\bibitem{Ikeda_Watanabe}
		N.~Ikeda, S.~Watanabe. 
		Stochastic differential equations and diffusion processes. Second Edition.
		\textit{North-Holland Mathematical Library}, 1989.
		
		\bibitem{Jang_Ju_Wang_LLB}
		S.~Jiang, Q.~Ju, H.~Wang.
		Martingale weak solutions of the stochastic Landau–Lifshitz–Bloch equation.
		\textit{Journal of Differential Equations}, \textbf{266}(5): 2542-2574, 2019.
		
		
		\bibitem{Krein_Milman}
		M.~Krein, D.~Milman.
		On extreme points of regular convex sets.
		\textit{Studia Math.} \textbf{9}:133–138, 1940.
		
		\bibitem{lakshmanan2011}
		M.~Lakshmanan.
		The fascinating world of the Landau–Lifshitz–Gilbert equation: an overview.
		\textit{Philosophical Transactions of the Royal Society A: Mathematical, Physical and Engineering Sciences} \textbf{369}(1939):1280--1300, 2011.
		
		\bibitem{landau1935theory}
		L.~D.~Landau, E.~M.~Lifschitz.
		On the theory of the dispersion of magnetic permeability in ferromagnetic bodies
		{\em Phys. Z. Sowjetunion}, \textbf{8}(153):101--114, 1935.
		
		\bibitem{corso_gibbs}
		A.~Le Ny.
		Introduction to (generalised) Gibbs measures.
		{\em Sociedade Brasileira de Matematica} \textbf{20}:1--126, 2008.
		
		\bibitem{Neklyudov_Prohl}
		M.~Neklyudov, A.~Prohl.
		The role of noise is finite ensembles of nanomagnetic particles.
		{\em Arch. Rat. Mech. Anal.} \textbf{210}:499--534, 2013.
		
		\bibitem{Rock_Zhu_1}
		M.~Röckner, B.~Wu, R.~Zhu, X.~Zhu.
		Stochastic Heat Equations with Values in a Manifold via Dirichlet Forms.
		{\em SIAM J. Math. Anal.} \textbf{52}: 2237-2274, 2020.
		
		\bibitem{Rock_Zhu_2}
		M.~Röckner, B.~Wu, R.~Zhu, X.~Zhu.
		Stochastic heat equations for infinite strings with values in a manifold.
		{\em Trans. Amer. Math. Soc.} \textbf{374}: 407--452, 2021.
		
		\bibitem{BM_hypersurface}
		M.~van~den~Berg, J.~T.~Lewis.
		 Brownian Motion on a Hypersurface.
		 \textit{ Bulletin of the London Mathematical Society}, \textbf{17}(2) 144--150, 1985.
		 
		 \bibitem{van_den_Berg_Williams}
		 M.~van~den~Berg, J.~Williams.
		 (In-)stability of singular equivariant solutions to the Landau–Lifshitz–Gilbert equation.
		 \textit{European Journal of Applied Mathematics} \textbf{24} (6): 921-948, 2013.
		 
		 \bibitem{Zhu_Guo_Tan}
		 Y.~Zhu, B.~Guo, S.~Tan. 
		 Existence and uniqueness of smooth solution for system of ferromagnetic chain.
		 \textit{Science in China Series A-Mathematics}, \textbf{34} : 257 , 1991.
		
	\end{thebibliography}

\end{document}